\numberwithin{equation}{section}
\newcommand{\Nb}{{\mathbb{N}}}
\newcommand{\R}{{\mathbb{R}}}
\newcommand{\Zb}{{\mathbb{Z}}}
\newcommand{\beq}{\begin{equation}}
\newcommand{\eeq}{\end{equation}}
\newcommand{\w}{\overset{\ast}{\rightharpoonup}}
\newcommand{\Fcal}{{\mathcal{F}}}
\newcommand{\Gcal}{{\mathcal{G}}}
\newcommand{\Mcal}{{\mathcal{M}}}
\newcommand{\Lcal}{{\mathcal{L}}}
\newcommand{\eps}{{\epsilon}}
\newcommand{\diff}{{\rm diff}}
\renewcommand{\eps}{\varepsilon}
\newcommand{\Fcuv}{\overline{\Fcal}_{u,v}}
\def\O{{\Omega}}
\def\CQ{{\mathcal C}{\mathcal Q}}
\def\Q{\mathcal Q}
\def\XXint#1#2#3{{\setbox 0=\hbox{$#1{#2#3}{\int}$}
\vcenter{\hbox{$#2#3$}}\kern-.5\wd0}}
\definecolor{vg}{rgb}{0.0, 0.26, 0.15}
\definecolor{coolblack}{rgb}{0.0, 0.18, 0.39}
\def\rightharpoonupfill@{\arrowfill@\relbar\relbar\rightharpoonup}
\newcommand{\xrightharpoonup}[2][]{\ext@arrow
0359\rightharpoonupfill@{#1}{#2}} \makeatother
\newtheorem{Theorem}{Theorem}[section]
\newtheorem{Lemma}[Theorem]{Lemma}
\newtheorem{Proposition}[Theorem]{Proposition}
\newtheorem{Remark}[Theorem]{Remark}
\newtheorem{Example}[Theorem]{Example}
\def\f1min{f_1^{\rm min}}
\title[Relaxation with linear growth]{Relaxation of functionals with linear growth: interactions of emerging measures
and free discontinuities}
\author{Stefan Kr\"{o}mer}
\address{Czech Academy of Sciences, Institute of Information Theory and Automation,
Pod vod\'{a}renskou v\v{e}\v{z}\'{\i}~4, CZ-182 08 Prague 8, Czechia}
\email{skroemer@utia.cas.cz}
\author{Martin Kru\v{z}\'{\i}k}
\address{ Czech Academy of Sciences, Institute of Information Theory and Automation,
Pod vod\'{a}renskou v\v{e}\v{z}\'{\i}~4, CZ-182 08 Prague 8, Czechia and
Faculty of Civil Engineering, Czech Technical University, Th\'{a}kurova 7, CZ--166~29 Prague~6, Czechia}
\email{kruzik@utia.cas.cz}
\author{Elvira Zappale}
\address{Dipartimento di Scienze di Base ed Applicate per l'Ingegneria, Sapienza - Universit\`a di Roma, Via Antonio Scarpa, 16 
	00161, Roma, RM Italy}
\email{elvira.zappale@uniroma1.it }
\begin{document}

 \begin{abstract}  
For an integral functional defined on functions $(u,v)\in W^{1,1}\times L^1$ featuring a prototypical strong interaction term between $u$ and $v$, we  calculate  its  relaxation in  the space of functions with bounded variations and  Radon measures. Interplay between measures and discontinuities bring various additional difficulties, and concentration effects in recovery sequences play a major role for the relaxed functional even if the limit measures are absolutely continuous with respect to the Lebesgue one.

\vspace{8pt}

 \noindent\textsc{MSC (2010):} 49J45, 28A33
 
 \noindent\textsc{Keywords:} 
  Lower semicontinuity, nonreflexive spaces, relaxation, concentration effects

 \vspace{8pt}
 
 \noindent\textsc{Date:} \today 
 \end{abstract}

\maketitle
\thispagestyle{empty}
\hrule\vspace{2pt}
\hrule
\vspace{10mm}


\section{Introduction}
Oscillations and/or concentrations appear naturally in many problems in the
calculus of variations, partial differential equations, and
optimal control theory due to the lack of convexity properties and/or compactness. Concentrations usually do not play such a prominent role in minimization problems for integral functionals with superlinear growth because 
of various decomposition lemmas (see e.g~\cite{FMP}) allowing us to show that the integrand is equiintegrable along minimizing sequences. However, concentrations are a key issue in problems with only linear coercivity. This phenomenon is intimately connected with 
nonreflexivity of underlying spaces $L^1$ and $W^{k,1}$ where such problems are usually formulated.  

  Minimization  problems  for  a functional $F:W^{1,p}(\O;\mathbb{R}^m)\times L^q(\O;\R^d)$, where $\O\subset\R^n$ is a bounded Lipschitz domain,  were considered
	e.g.~in \cite{FKP1}  with $p=1$, $q=+\infty$, in \cite{CZA,CZE} with $p=1$, $q \in (1,\infty]$) and
	in \cite{FKP2} with $p, q>1$ for functionals modeling energy of multiphase materials. There,
\begin{align*}
F(u,v):=\int_\O  \psi(\nabla u(x), v(x))\, {\rm d} x \ ,
\end{align*}
with $\psi$ a   material  stored energy density,     $u$  an elastic deformation, and $v$ denoting a  chemical composition.  Other examples include, e.g., magnetoelasticity \cite{DD,FKP2} where $u$ 
is again a deformation mapping and $v$ is a  magnetization vector,  models of particle inclusions in elastic matrices \cite{CL},  or elastoplasticity, where  $v$ plays a role of  the plastic strain. In this article, we focus on 
$p=q=1$ and study a simplified model problem for the general case
\begin{align}\label{eq:generalfunct}
F(u,v):=\int_\O  \psi(x,u(x),\nabla u(x),  v(x))\, {\rm d} x \ ,
\end{align}
whose main feature is a "strong" interaction terms between $u$ and $v$ of the form $f_1(u)f_2(v)$ where $f_2$ has positive linear growth.

The linear growth $p=q=1$ combines oscillation effects that can be caused by nonconvexity of the energy density in $\nabla u$ and $v$ with 
possible concentrations of minimizing sequences related to the lack of weak compactness of $W^{1,1}(\O;\R^m)\times L^1(\O;\R^d)$. 
The latter inevitably calls for an extension of our minimization problem to a large space possessing better compactness properties. A natural choice is the space of functions of bounded variations  $BV(\O;\R^m)$ for the variable $u$ and the Radon measures on $\bar\Omega$,  $\mathcal{M}(\bar\Omega;\R^d)$, for $v$. Correspondingly,  $F$ must be extended to $BV(\O;\R^m)\times \mathcal{M}(\bar\Omega;\R^d)$, which  we perform  by calculating an explicit representation of $\overline{\mathcal F}:=\Gamma-\liminf F$ with respect to weak$^*$-topology in $BV\times L^1$,
 cf. \cite{DM}. This extension procedure is called {\it relaxation} and provides us with a weak$^*$-lower semicontinuous functional.  
Relaxation results for $\psi$ quasiconvex and  independent of $v$ and $u$ were first proven by Ambrosio and Dal Maso in \cite{AD} and then extended by Fonseca and M\"{u}ller to the case $\psi=\psi(x,u,\nabla u)$ and $\psi(x,u,\cdot)$ quasiconvex in \cite{FM1,FM2}. A new proof 
under weaker assumptions was recently given by Rindler and Shaw in \cite{RS}. 

If the variable $v$ is included in $\psi$ and we consider $F$ from \eqref{eq:generalfunct} then we refer to \cite{RZ1, RZ} for relaxation results if $p=1$ and $q\ge 1$. However, this result uses the $L^1$-weak topology for approximation of $v$, thereby completely ruling out concentrations and avoiding general measures $v$ outside of $L^1$. In the context of dimension reduction, related results for the case without explicit dependence on $u$ can be found in \cite{BZZ}. 

Interesting phenomena are expected to occur if concentrations are combined with discontinuities, because discontinuous functions do not belong to the predual space of Radon measures. In particular, such phenomena naturally appear in impulse control. For instance, 
application of a drug  may  lead to an ``instant'' change of  conditions of a patient,  the ignition of the engine makes the space shuttle ``suddenly'' change its position. Besides, interactions of oscillations, concentrations and discontinuities lead to interesting questions and challenges for mathematical research; see e.g.~\cite{HKW,KKK} for some recent results. 
For suitable examples, the relaxation results presented below allow us to observe the natural formation
of all the three phenomena just described in minimizing sequences.

In the following, $\Omega\subset \mathbb R^n$ is a bounded open set with Lipschitz boundary. For every $(u,v)\in W^{1,1}(\Omega;\mathbb R^m)\times L^1(\Omega;\mathbb R^d)$, we consider the functional
		\begin{equation}
		\label{originalfunctional}
		F(u,v):=\int_\Omega f_1(u)f_2(v)dx+ \int_\Omega W(\nabla u)dx,
		\end{equation}
		where $f_1:\mathbb R^m\to \mathbb R$, $f_2 :\mathbb R^d\to \mathbb R$, and $W:\mathbb R^{m \times n}\to \mathbb R$ are continuous functions such that 
		
		\begin{itemize}
			\item[$(H_1)$] there exist $C_2>C_1>0$ such that  for every $a \in \mathbb R^m$:
			\begin{equation}\nonumber C_1\leq f_1(a) \leq C_2\end{equation}
			\item[$(H_2)$] there exists $K>0$, such that  for every $b \in \mathbb R^d$: 
			\begin{equation}\nonumber
			K^{-1}|b|\leq f_2(b)\leq K(1+ |b|);
			\end{equation} 
			\item[$(H_3)$] there exists $\kappa >0$ such that for every $\xi \in \mathbb R^{m\times n}:$
			\begin{equation}\nonumber
			\kappa^{-1}|\xi| \leq W(\xi)\leq \kappa(1 +|\xi|)\ .
			\end{equation}
		\end{itemize}

		We aim at  giving an integral representation of the sequentially lower semicontinuous envelope of $F$ with respect to the $BV\times\mathcal M$ weak* convergence, namely:
		\begin{align}\label{Frelax}
		\begin{aligned}
		\overline{\mathcal F}(u,v):=
		\inf\left\{\,\liminf_{k\to +\infty}
		F(u_k,v_k)
		\,\left|\,
		\begin{array}{l}
		(u_k,v_k)\in W^{1,1}(\Omega;\mathbb R^m)\times L^1(\Omega;\mathbb R^d),\\
		(u_k,v_k)\overset{\ast}{\rightharpoonup} (u,v)\hbox{ in } BV(\Omega;\mathbb R^m)\times \mathcal M(\bar\Omega;\mathbb R^d)
		\end{array}
		\right.\right\}.
		\end{aligned}
		\end{align}

Making use of the Radon-Nikod\'{y}m decomposition of a measure $\mu=\mu^a+\mu^s$ given in \eqref{vdecompositionBV}, we prove the following two main results. 
\begin{Theorem}[Relaxation theorem -- the case $n \geq 2$]\label{mainthm}
Let $n\geq 2$ and $\Omega\subset \R^n$ be a bounded Lipschitz domain. Under the assumptions $(H_1)$, $(H_2)$ and $(H_3)$, 
we have that 
\begin{align}\label{reprelaxed}
\begin{aligned}
				\overline{\mathcal F}(u,v)=&
				\int_\Omega \mathcal{Q} W(\nabla u)\,dx 
				+\int_\Omega (\mathcal QW)^\infty\left(\frac{d Du^s}{d |D u^s|}\right)\,d |Du^s|
				\\ 
				&+\int_\Omega g\left(u,\frac{d v^a}{d {\mathcal L}^n}\right)\,dx
				+\int_{\bar\Omega}\f1min \,
				(f_2^{\ast\ast})^\infty\left(\frac{d v^s }{d|v^s|}\right)\,d|v^s|,
\end{aligned}
\end{align}
for every $u \in BV(\Omega;\mathbb R^m)$ and $ v \in \mathcal M(\bar\Omega;\mathbb R^d)$. 
Here,
$\mathcal QW$ denotes the quasiconvex envelope of $W$  (see Section \ref{sec:notpre} below), 
\begin{align}
		\label{g}
	g(a,b):=\min\left\{f_1(a)f_2^{\ast\ast}(b_1)+\f1min \,(f_2^{\ast\ast})^\infty(b_2)\,\left|\,b_1,b_2\in\R^d,~b_1+b_2=b\right.\right\},
\end{align}
$f_2^{\ast \ast}$ denotes the bipolar function of $f_2$, which, in view of $(H_2)$, coincides with the greatest lower semicontinuous and convex function below $f_2$, the superscript ``$\infty$'' denotes the recession functions of the above mentioned envelopes, 
and
\begin{align}
		\label{f1umin}
		\f1min:=\inf_{a \in \mathbb R^m}\{f_1(a)\}\ge C_1.
\end{align}	
\end{Theorem}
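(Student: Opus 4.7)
We establish \eqref{reprelaxed} by the usual double inequality: a $\Gamma$-liminf bound and a matching recovery sequence. The two gradient integrals involve only $u$ and decouple from $v$, so they fall under the classical relaxation theory of Ambrosio--Dal Maso and Fonseca--M\"uller \cite{AD,FM1,FM2} (or the more recent \cite{RS}); the substantive novelty is the interaction term, whose relaxation displays the new infimal-convolution structure encoded in $g$.

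For the \emph{liminf inequality}, let $(u_k,v_k)\overset{*}{\rightharpoonup}(u,v)$ with uniformly bounded energies. $BV$-compactness yields $u_k\to u$ strongly in $L^1$, and by $(H_2)$, up to a subsequence, $f_2(v_k)\mathcal L^n\overset{*}{\rightharpoonup}\mu$ in $\mathcal M(\bar\Omega)$. Standard Reshetnyak-type lower semicontinuity applied to the convex envelope $f_2^{\ast\ast}$ produces the domination $\mu\ge f_2^{\ast\ast}(dv^a/d\mathcal L^n)\mathcal L^n+(f_2^{\ast\ast})^\infty(dv^s/d|v^s|)|v^s|$, leaving a nonnegative excess ac density $\varrho$ that originates from vanishing concentration events in $v_k$ which are reabsorbed into $v^a$ in the limit. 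On the regions where $\varrho$ dominates, the only available lower bound for $f_1(u_k)$ is $\f1min$, since on such vanishing concentration sets $u_k$ could take values near $\operatorname{argmin}f_1$, whereas on the genuinely ac part the sharper bound $f_1(u(x))$ is valid. A blow-up analysis at Lebesgue points of the relevant densities then produces a measurable split $b_1+b_2=dv^a/d\mathcal L^n$ for which $\liminf\int f_1(u_k)f_2(v_k)\,dx$ is at least $\int_\Omega[f_1(u)f_2^{\ast\ast}(b_1)+\f1min(f_2^{\ast\ast})^\infty(b_2)]\,dx+\int_{\bar\Omega}\f1min(f_2^{\ast\ast})^\infty(dv^s/d|v^s|)\,d|v^s|$; the first integral dominates $\int_\Omega g(u,dv^a/d\mathcal L^n)\,dx$ pointwise by definition of $g$.

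For the \emph{upper bound}, given $(u,v)$, first construct $u_k\overset{*}{\rightharpoonup}u$ realizing the two gradient integrals via the classical $BV$-recovery sequence. For the interaction, take a measurable selection of minimizers $(b_1(x),b_2(x))$ in \eqref{g} for $b(x)=dv^a/d\mathcal L^n(x)$ and define $v_k:=b_1+w_k^{(a)}+w_k^{(s)}$, where $w_k^{(a)}$ and $w_k^{(s)}$ are $L^1$ functions supported on vanishingly small sets $E_k^a,E_k^s$ engineered so that $w_k^{(a)}\mathcal L^n\overset{*}{\rightharpoonup}b_2\mathcal L^n$ and $w_k^{(s)}\mathcal L^n\overset{*}{\rightharpoonup}v^s$ in $\mathcal M(\bar\Omega;\mathbb R^d)$; the portion of $v^s$ supported on $\partial\Omega$ is realized by interior $L^1$-spikes accumulating on $\partial\Omega$, which is possible thanks to its Lipschitz regularity. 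On $E_k^a\cup E_k^s$, modify $u_k$ so that its values converge to an $f_1$-minimizer; since these sets have vanishing measure and can be chosen disjoint from $J_u$ up to vanishing error, neither the $BV$-weak$^\ast$ limit nor the gradient energies are perturbed, while the interaction integral converges to the $g$- and singular-terms in \eqref{reprelaxed}. A standard diagonal argument then closes the construction.

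The principal obstacle is the liminf bound for the interaction term, namely the emergence of the infimal-convolution structure in $g$: even when $v$ is absolutely continuous, part of its density may arise from vanishing concentrations in $v_k$ where $u_k$ could have been tuned to an $f_1$-minimizer; rigorously separating this ``concealed concentration'' portion from the genuinely ac portion, and matching the correct factor $\f1min$ versus $f_1(u)$ on each piece, is the core technical difficulty. Secondary but nontrivial issues include the realization of boundary concentrations $v^s\res\partial\Omega$ by interior $L^1$ sequences (requiring Lipschitz regularity of $\partial\Omega$) and the need to keep the supports of the concentration spikes disjoint from the jump set $J_u$ in the recovery construction, so as not to corrupt the surface contribution in the gradient energy.
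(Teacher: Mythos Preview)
Your overall architecture is right: the gradient part decouples and falls under \cite{AD}, while the interaction term relaxes to the infimal-convolution density $g$ through a hidden splitting of $v^a$ into an ``honest'' oscillatory piece and a piece coming from vanishing concentration. But there is a genuine gap in the recovery construction, and a methodological difference in the lower bound worth noting.

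\medskip
\textbf{The real gap: why the modification of $u$ is cheap.} In your limsup step you write that on the spike sets $E_k^a\cup E_k^s$ you ``modify $u_k$ so that its values converge to an $f_1$-minimizer; since these sets have vanishing measure and can be chosen disjoint from $J_u$ \ldots\ the gradient energies are not perturbed.'' This is where the argument breaks. Vanishing Lebesgue measure of $E_k$ and avoidance of $J_u$ do \emph{not} control the gradient cost of forcing $u_k$ to equal a fixed constant on $E_k$: the transition layer at $\partial E_k$ contributes a surface term of order $\|u_k\|_\infty\,\mathcal H^{n-1}(\partial E_k)$ (or, for a smooth cutoff, $\|u_k\|_\infty\|\nabla h_k\|_{L^1}$), and nothing you have said makes this small. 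The paper's mechanism is precisely the point where the hypothesis $n\ge 2$ enters: one uses that single points have zero $W^{1,1}$-capacity, concretely via a function $\varphi\in W^{1,1}_0(B_1)$ with $\varphi(y)\to+\infty$ as $y\to 0$ (e.g.\ $\varphi(y)=\log(1-\log|y|)$). Truncating $\varphi$ at two levels produces cutoffs $\varphi_s$ with $0\le\varphi_s\le 1$, $\varphi_s=1$ near $0$, and $\|\nabla\varphi_s\|_{L^1}\to 0$; placing rescaled copies at the spike centers and interpolating between a truncation $u^{[1/\delta]}$ of $u$ and a near-minimizer $u_{\min}^\delta$ gives the required modification at $o(1)$ gradient cost. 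Your proposal never invokes $n\ge 2$, and indeed your argument as written would equally ``prove'' the same formula in dimension one, which is false (cf.\ Theorem~\ref{mainthm-1d} and Remark~\ref{rem:1dconceffects}). The issue you flag as ``secondary'' --- keeping spikes off $J_u$ --- is a red herring; the capacity argument is the heart of the matter.

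\medskip
\textbf{Lower bound: a cleaner tool than blow-up.} Your liminf sketch is in the right spirit but the mechanism you propose --- extract the Reshetnyak defect measure $\varrho$ and then run a blow-up --- is vague at the crucial step of converting $\varrho$ into a pointwise split $b_1+b_2$ of $dv^a/d\mathcal L^n$. The paper avoids blow-up entirely for this part: it applies the $L^1$ Decomposition Lemma (biting lemma) to write $v_k=v_k^{\rm osc}+v_k^{\rm conc}$ with $(v_k^{\rm osc})$ equiintegrable and $\mathcal L^n(\{v_k^{\rm conc}\neq 0\})\to 0$. On $\{v_k^{\rm conc}=0\}$ one gets $\int f_1(u)f_2^{\ast\ast}(v_{\rm osc})$ by Ioffe-type lower semicontinuity; on $\{v_k^{\rm conc}\neq 0\}$ one replaces $f_1(u_k)$ by $\f1min$ and $f_2^{\ast\ast}$ by $(f_2^{\ast\ast})^\infty$ (legitimate since the sequence is purely concentrating), and then Reshetnyak gives the singular term plus $\int \f1min (f_2^{\ast\ast})^\infty(v_{\rm conc}^a)$. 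The limits $v_{\rm osc}$ and $v_{\rm conc}^a$ are exactly an admissible pair $(b_1,b_2)$ in the definition of $g$. This is shorter and more transparent than a blow-up, and it makes the origin of the split manifest.
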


For the one-dimensional case, yet another decomposition of measures is relevant, namely, $\mu=\mu^0+\mu^\diff$ decomposed into its atomic part $\mu^0$ and the "diffuse" rest $\mu^\diff$, cf.~\eqref{atomdecomp}. 
\begin{Theorem}[Relaxation theorem -- the case $n=1$]\label{mainthm-1d}
Let $\Omega=(\alpha,\beta)$ be a bounded open interval, $u\in BV(\Omega;\R^m)$, $v\in \mathcal{M}(\bar\Omega;\R^d)$, and suppose that $(H_1)$, $(H_2)$ and $(H_3)$ hold. 
Then we have that
\begin{align}\label{relax1d}
\begin{aligned}
				\overline{\mathcal F}(u,v)=&
				\int_\Omega \Big(f_1(u(x))df_2^{\ast\ast}\big(v^\diff\big)(x)+dW^{\ast\ast}\big(Du^\diff\big)(x)\Big)
				+\sum_{x\in S^0} f_W^0\big(u(x^+), u(x^-), v^0(\{x\})\big)\\
				&+\inf_{z\in\R^m} f_W^0\big(u(\alpha^+), z, v^0(\{\alpha\})\big)+\inf_{z\in\R^m} f_W^0\big(z,u(\beta^-), v^0(\{\beta\})\big)
\end{aligned}
\end{align}
Here, $f_2^{\ast\ast}$ and $W^{\ast\ast}$ are the convex hulls of $f_2$ and $W$, respectively,
\[
	S^0:=\{x\in \Omega:|v|(\{x\})+|Du|(\{x\})\neq 0\}
\]
is the (at most countable) set charged by atomic contributions in the interior of $\Omega$,
and the associated density $f_W^0:\R^m\times \R^m\times \R^d\to \R$ is given by
\begin{align}
\label{jumpeffective}
f_W^0(a^+,a^-,b):=
\inf_{\begin{array}{ll}
		u \in W^{1,1}((-1,1);\R^m), \\
		v\in L^1((-1,1); \R^d)\\
		u(-1)=a^-,~~u(1)=a^+,\\
		\int_{-1}^{1}v dx= b
\end{array}}\left\{\int_{-1}^{1}(f_1(u)(f_2^{\ast\ast})^\infty(v)+(W^{\ast\ast})^\infty(u'))dx\right \}. 
\end{align}
In addition, for the diffuse contributions we used the following abbreviations for nonlinear transformations of measures defined with the help of recession functions (cf.~Subsection~\ref{ssec:nonlintransmeas}): 
\[
\begin{aligned}
	& df_2^{\ast\ast}\big(v^\diff\big)(x) =f_2^{\ast\ast}\Big(\frac{dv^{\diff,a}}{dx}(x)\Big)\,dx
	+(f_2^{\ast\ast})^\infty \Big(\frac{dv^{\diff,s}}{d|v^{\diff,s}|}(x)\Big)d|v^{\diff,s}|(x)\quad\text{and}\\
		& dW^{\ast\ast}\big(Du^\diff\big)(x) =W^{\ast\ast}\big(u'(x)\big)\,dx
	+(W^{\ast\ast})^\infty \Big(\frac{Du^{c}}{d|Du^c|}(x)\Big)d|Du^c|(x),
\end{aligned}
\] 
where $v^{\diff}=v^{\diff,a}+v^{\diff,s}$ and $Du^\diff=Du^{\diff,a}+Du^{\diff,s}=\nabla u \Lcal^1+Du^{c}$ are split into absolutely continuous and singular parts with respect to the Lebesgue measure $\Lcal^1$. Notice that in case of $Du^\diff$, this can be expressed using the approximate gradient $u'$ and the Cantor part $Du^{c}$ of $Du$. Moreover, $u(\alpha^+)$ and $u(\beta^-)$ denote the traces of $u$ at the boundary, and similarly, $u(x^+)$ and $u(x^-)$ denote the traces of $u$ from the right and the from the left, respectively, at an interior point $x$. 
\end{Theorem}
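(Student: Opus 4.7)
The plan is to establish \eqref{relax1d} by proving matching upper and lower bounds, exploiting three simplifying features of $n=1$: $BV(\Omega;\R^m)\subset\subset L^\infty(\Omega;\R^m)$; the jump set of a $BV$-map is at most countable; and quasiconvexity coincides with convexity. Accordingly, the right-hand side of \eqref{relax1d} decomposes into a diffuse bulk-plus-Cantor contribution, a discrete contribution from interior atoms $x_0\in S^0$, and one-sided contributions at the boundary points $\{\alpha,\beta\}$, each of which I would argue separately and then recombine.

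For the lower bound, take $(u_k,v_k)\weakstar(u,v)$ with $\sup_k F(u_k,v_k)<\infty$. Since $u_k\to u$ pointwise a.e.\ and boundedly in 1D, $f_1\circ u_k$ converges to $f_1\circ u$ suitably. On any relatively closed set $A\subset\Omega$ disjoint from $S^0\cup\{\alpha,\beta\}$, a Reshetnyak-type (Ambrosio--Dal Maso) lower semicontinuity theorem for convex integrals of measures with linear growth, applied after replacing $f_2,W$ by their convex hulls (using $f_2\ge f_2^{\ast\ast}$ and $W\ge W^{\ast\ast}$), yields the diffuse term. At each atom $x_0\in S^0$, I would localize in $(x_0-\delta,x_0+\delta)$ and rescale: $\hat u_k(t):=u_k(x_0+\delta_k t)$, $\hat v_k(t):=\delta_k v_k(x_0+\delta_k t)$ on $(-1,1)$, with $\delta_k\downarrow 0$ chosen as a continuity scale so that $|v^\diff|+|Du^\diff|$ vanishes at $x_0\pm\delta_k$. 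The energy on $(x_0-\delta_k,x_0+\delta_k)$ rescales to
\[
\int_{-1}^{1}\Big(f_1(\hat u_k)\,\delta_k f_2(\hat v_k/\delta_k)+\delta_k W(\hat u_k'/\delta_k)\Big)\,dt,
\]
whose liminf is $\ge\int_{-1}^{1}\big(f_1(\hat u)(f_2^{\ast\ast})^\infty(\hat v)+(W^{\ast\ast})^\infty(\hat u')\big)dt$ by the standard recession-limit argument for convex envelopes combined with the $L^\infty$-convergence of $\hat u_k$. Since the limit $(\hat u,\hat v)$ is admissible in \eqref{jumpeffective} with $\hat u(\pm 1)=u(x_0^\pm)$ and $\int_{-1}^{1}\hat v\,dt=v^0(\{x_0\})$, the bound $\ge f_W^0(u(x_0^+),u(x_0^-),v^0(\{x_0\}))$ follows. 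Boundary atoms require a one-sided blow-up: the trace outside $\Omega$ is uncontrolled and becomes the $\inf_z$. Summing the localized bounds (which are supported on disjoint intervals up to a vanishing error) gives the full lower bound.

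For the upper bound, I construct $(u_k,v_k)$ piecewise. Away from the atoms, approximate $v^\diff$ and $u^\diff$ by $L^1$/$W^{1,1}$ sequences realizing $f_2^{\ast\ast}$ and $W^{\ast\ast}$ on their Lebesgue-absolutely-continuous parts via two-value oscillation (using 1D convexity), and achieving the recessions $(f_2^{\ast\ast})^\infty,(W^{\ast\ast})^\infty$ on the singular-continuous parts by concentrating on thin sets of vanishing Lebesgue measure. Around each atom $x_0\in S^0$, insert into $I_k:=(x_0-\delta_k,x_0+\delta_k)$ an almost-minimizer of \eqref{jumpeffective} rescaled from $(-1,1)$ to $I_k$, with endpoint values adjusted to match the surrounding diffuse approximation (using trace continuity afforded by the 1D embedding) and the total mass tuned to $v^0(\{x_0\})$ up to a vanishing correction. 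Boundary atoms are handled analogously, with the free parameter $z$ realized by a nearly-optimal endpoint datum. A diagonalization over approximation parameters and over truncation to the finitely many heaviest atoms produces the final recovery sequence, whose energy converges by construction to the right-hand side of \eqref{relax1d}.

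The main obstacle is the compactness and admissibility of the blow-up limits at atoms: I must exhibit, after rescaling, a subsequence of $(\hat u_k,\hat v_k)$ converging to $(\hat u,\hat v)\in W^{1,1}((-1,1);\R^m)\times L^1((-1,1);\R^d)$ as required by \eqref{jumpeffective}, rather than merely in $BV\times\mathcal M$. Compactness for $\hat v_k$ and $\hat u_k'$ in $L^1$ follows from $\|\hat v_k\|_{L^1}=|v_k|(I_k)\le|v|(\{x_0\})+o(1)$ once $\delta_k$ is a continuity scale; but excluding further sub-scale concentration inside $(-1,1)$ requires refining $\delta_k$ by a nested-scale/diagonal argument. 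A related technical point is the lower semicontinuity of $f_W^0$ in its arguments $(a^+,a^-,b)$, needed because the prescribed traces $u_k(x_0\pm\delta_k)$ only converge to $u(x_0^\pm)$; this follows directly from the variational definition by a competitor-modification argument.
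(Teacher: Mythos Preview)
Your outline follows the paper's strategy in spirit (lower/upper bounds, blow-up at atoms, diffuse part separately), but diverges from it at two points, one structural and one substantive.

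Structurally, the paper first shows (Lemma~\ref{lem:restrRadon}) that the localized relaxation $A\mapsto\overline{\mathcal F}(u,v,A)$ is the restriction of a finite Radon measure on $\overline\Omega$, and then reduces \eqref{relax1d} to computing its Radon--Nikod\'ym densities with respect to the atomic and diffuse parts of $\theta=\mathcal L^1+|Du|+|v|$. This cleanly separates the countably many atoms from the diffuse part and absorbs the ``partition-and-sum with vanishing error'' bookkeeping you sketch.

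Substantively, at each interior atom your lower bound tries to pass the rescaled sequence $(\hat u_k,\hat v_k)$ to a limit $(\hat u,\hat v)\in W^{1,1}\times L^1$ and then plug that limit into \eqref{jumpeffective}. You correctly identify the obstacle: bounded $L^1$-norm gives no weak-$L^1$ compactness for $\hat v_k$, and your proposed ``nested-scale/diagonal'' fix is not convincing as stated, since the concentration profile of $v_k$ near $x_0$ along the given sequence is not under your control. The paper avoids this problem altogether. After rescaling and replacing $f_2,W$ by their recession functions via \eqref{f2infty2}, it (i) shifts the rescaled $\hat v_\varepsilon$ by a constant so that $\int_{-1}^1\hat v_\varepsilon=v(\{x_0\})$ exactly, and (ii) modifies $\bar u_\varepsilon$ near $\pm 1$ to match the boundary values $u(x_0^\pm)$ by a standard De~Giorgi--type slicing argument (Lemma~\ref{boundaryc}). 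The resulting pair $(\hat u_\varepsilon,\hat v_\varepsilon)$ is then itself admissible in the infimum defining $f_W^0$ for every $\varepsilon$, so
\[
\liminf_\varepsilon \int_{-1}^1\Big(f_1(\hat u_\varepsilon)(f_2^{\ast\ast})^\infty(\hat v_\varepsilon)+(W^{\ast\ast})^\infty(\hat u_\varepsilon')\Big)\,dt \ \ge\ f_W^0\big(u(x_0^+),u(x_0^-),v(\{x_0\})\big)
\]
without ever taking a limit inside the cell. This also makes your lower-semicontinuity question for $f_W^0$ in $(a^+,a^-,b)$ unnecessary.

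A minor correction: in one dimension $BV\hookrightarrow L^\infty$ is continuous but not compact, so you cannot infer uniform convergence of $u_k$; the paper instead proves the pointwise statement $u_k(x_0)\to u(x_0)$ at each non-atomic $x_0$ directly from $|Du_k|\weakstar\lambda$ and $\lambda(\{x_0\})=0$ (see \eqref{1dlb-calc2c}).
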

\
\begin{Remark}
Both relaxation theorems have easily deduced variants in the space $BV(\Omega;\R^m)\times \mathcal{M}(\Omega;\R^d)$ (measures on $\Omega$ instead of $\bar\Omega$).
The integral representations for the relaxed functionals in that case are obtained by simply dropping all contributions on $\partial\Omega$ in \eqref{reprelaxed} and \eqref{relax1d}, respectively.
\end{Remark}
\begin{Remark}
Both relaxation theorems still hold if we impose an additional mass constraint on $v$, i.e., $\int_\Omega v(x)\,dx=M$ with a constant $M$, which turns into
$v(\overline\Omega)=M$ for the relaxation. To see this, the recovery sequences in the proofs (for the upper bound) have to be modified slightly, artificially correcting the mass which is easily done with negligible error for the energy. This is actually the reason we chose to study the relaxation problem with the measures $v$ on the closure $\overline\Omega$, as a mass constraint is in general lost along weak$^*$-convergence in $\mathcal{M}(\Omega;\R^d)$.
\end{Remark}
\begin{Remark}
In essence, the difference between the cases $n=1$ and $n\geq 2$ arises from the fact that 
points have $1$-capacity zero for $n\geq 2$, but not for $n=1$. As a consequence, for $n\geq 2$, it is possible to change the value of $u$ to optimize $f_1(u)$ locally near points that get charged by $v$ or its recovery sequence, without noticeable energy cost paid in $W$. This ultimately leads to the natural appearance of $\f1min$ in the relaxation formula for $n\geq 2$ and can in fact
make concentration effects in the recovery sequence for $v$ energetically favorable even if the limit state $v$ is absolutely continuous with respect to the Lebesgue measure, see Example~\ref{ex:concentrate} and Remark~\ref{rem:concentrations-are-natural}.
\end{Remark}

\begin{Remark}
For $n\geq 2$, Theorem~\ref{mainthm} can be easily modified to include the case $1<p\leq n$, with
$u\in W^{1,p}$ and using $W^{1,p} \ni u_k\rightharpoonup u$ weakly in $W^{1,p}$ in \eqref{Frelax} (instead of 
$W^{1,1} \ni u_k\rightharpoonup^* u$ weakly$^*$ in $BV$).
In that scenario, the correct relaxed energy density is simply the restriction of \eqref{reprelaxed} to $u\in W^{1,p}$, i.e.,
the term with $(\mathcal{Q} W)^\infty$ is removed as $Du^s=0$. Apart from a few simplifications natural in $W^{1,p}$, the proof can essentially be followed step by step. The only point where one has to be careful is the upper bound (cf. Subsection \ref{ub}), more precisely, the function $\varphi$, which now has to be $W^{1,p}$ to still allow the cut-off arguments based on it. This is possible only for $p\leq n$, as for large $p$,
any function in $W^{1,p}$ is locally bounded by embedding. (The relaxation for $p>n$ is a much more classic problem where $u_k\to u$ uniformly and the variable for $u$ in the relaxation formula simply gets frozen instead of the more complex phenomena we observe here.)
\end{Remark}

The plan of the rest of the paper is as follows.  First, we introduce necessary notation in Section~\ref{sec:notpre}. A proof of Theorem~\ref{mainthm} is contained in Section~\ref{mt} together with a few  remarks. We also indicate there possible generalization and give an example where the relaxed functional is calculated explicitly.  Finally, a proof of Theorem~\ref{mainthm-1d} c.an be found in Section~\ref{oneD}.

\section{Notation and Preliminary Results}\label{sec:notpre}

In the following $\Omega$ is a bounded open set of $\mathbb R^n$, $n\geq 2$ and $\mathcal A(\Omega)$ stands for the family of open subsets of $\Omega$.

\subsection{Radon measures}	
 By $\mathcal M(\Omega)$ we denote the set of signed Radon measures on $\Omega$, for the $\mathbb R^d$-valued we use the symbol $\mathcal M(\Omega;\mathbb R^d)$.   We write $\mathrm{Supp }\,\mu$ to denote the  the support of $\mu\in\mathcal M(\Omega;\mathbb R^d)$.
  Any element  $ \mu\in \mathcal M(\Omega;\mathbb R^d)$ can be decomposed as 
			\begin{equation}
			\label{measuredecomposition}
			\mu:= \mu^a+ \mu^s,
			\end{equation}
		
			where $\mu^a$ is  absolutely continuous with respect to the  $n$-dimensional Lebesgue measure $\mathcal L^n$, and $\mu^s$ is the  singular part. A particularly important example of a scalar-valued, purely singular measure is the \emph{Dirac mass} $\delta_z$ charging a single point $z\in \R^n$. For any Borel set $A\subset\R^n$, it is defined by $\delta_z(A):=1$ if $z\in A$, and $\delta_z(A):=0$ otherwise.

			If $\mu \in \mathcal{M}(\Omega;\mathbb R^{d})$ and $\lambda \in \mathcal{M}(\Omega)$ is a
			nonnegative Radon measure, we denote by $\frac{d\mu}{d\lambda}$ the
			Radon-Nikod\'ym derivative of $\mu$ with respect to $\lambda$. By a
			generalization of the Besicovitch Differentiation Theorem (see \cite[Theorem 1.153 and related results in sections 1.2.1, 1.2.2]{FL}), it can be proved that there exists a Borel set $E
			\subset \Omega$ such that $\lambda(E)=0$ and 
			\begin{equation}
			\label{ADMProposition2.2}
			\frac{d\mu}{d\lambda}(x)=\lim_{\varepsilon \to 0^+} \frac{\mu(x+\varepsilon \, C)}{%
				\lambda(x+\varepsilon \, C)}
			\end{equation}
			for all $x \in \mathrm{Supp }\, \mu \setminus E$ and any open convex set $C$
			containing the origin. We recall that the exceptional set $E$ does not depend on $C$. 
			Also recall that almost every point in $\mathbb{R}^n$  is a Lebesgue point if $f\in L^1_{\mathrm{loc}}(\mathbb{R}^n,\mu)$ and $\mu$ is  a Radon measure,  i.e.,
				\begin{equation*}
				\lim_{\varepsilon \to 0^+} \frac{1}{\mu(x+ \varepsilon C)}\int_{x+
					\varepsilon C} | f(y) - f ( x ) | d\mu(y) =0 
				\end{equation*}
				for $\mu-$ a.e. $x\in \mathbb{R}^n$ and for every bounded, convex, open set 
				$C$ containing the origin.
				
				Analogous results hold for Radon measures on the compact set $\bar\Omega$, and we use analogous notation for this case, consistently replacing $\Omega$ by $\bar\Omega$ above. If 
			necessary, every measure in $\mathcal{M}(\bar\Omega)$ is understood to be extended by zero outside of $\bar\Omega$.

In what follows, for every measure $\mu \in \mathcal M(\Omega;\mathbb R^{d})$ we will identify the measure $\mu^a$, with its density $\frac{d \mu^a}{d \mathcal L^n}$.  

\subsection{Functions of bounded variation}			
			By $BV(\Omega;\mathbb R^m)$ we denote the space of functions with bounded variation, i.e. the set of $L^1(\Omega;\mathbb R^m)$ functions whose distributional gradient $Du$ is a bounded Radon measure on $\Omega$ (i.e. an element of $\mathcal M(\Omega;\mathbb R^d)$) with values in the space  $M^{m \times n}$ of $m \times n$ matrices.
			 For every $u \in BV(\Omega;\mathbb R^m)$, its distributional derivative $Du$ can be decomposed as $Du^a + Du^s$, where $Du^a$ is the absolutely  continuous part and the singular part with respect to the Lebesgue measure $\mathcal L^n$, respectively. The function $u$ is approximately differentiable $\mathcal L^n$-a.e. in $\Omega$ and its approximate gradient $\nabla u$ belongs to $L^1(\Omega;M^{m\times n})$ and coincides $\mathcal L^n$-a.e. with $\tfrac{d D u^a}{d \mathcal L^n}$.  $S_u$ coincides with the complement of the set of Lebesgue points of $u$, up to a set of null $\mathcal H^{n-1}$ measure. See \cite[Sections 3.5 and 3.6]{AFP} for more details.
	
		More precisely the following decomposition holds: 
	
		\begin{equation}\label{Dudecomposition}
		Du= \nabla u \mathcal L^n+Du^s= \nabla u \mathcal L^n+ Du^j+ Du^c,
		\end{equation}
		where $Du^j$ is concentrated on $\mathcal S_u$ the latter term is the part of $Du^s$ concentrated on the set  $\{x\in\Omega:\, \eqref{approximatelimit}\,\text{holds}\}$, where 
		\begin{equation}\label{approximatelimit}
			\lim_{\varepsilon\rightarrow0^{+}}\frac{1}{\mathcal{\varepsilon}^{n}}%
						\int_{Q\left(  x,\varepsilon\right)  }\left\vert u(y)	-u\left(  x\right)  \right\vert dy=0.
		\end{equation} holds.   Here, and in what follows, given $x\in\R^n$ and $\varepsilon>0$, $Q(x,\varepsilon)=\Pi_{i=1}^n(x_i-\varepsilon/2,x_i+\varepsilon/2)$. 
		
		According to decomposition  \eqref{Dudecomposition}, we can further specialize \eqref{measuredecomposition}, i.e. we can decompose 
		any measure $v \in \mathcal M(\bar \Omega;\mathbb R^d)$ into three mutually orthogonal measures
		\begin{equation}\label{vdecompositionBV}
		v= v^a+ v^{|Du^s|}+ v^\sigma,
		\end{equation}
i.e. $v^s= v^{|Du^s|}+ v^\sigma$, 	whereas $v^{|D u^s|}$ is absolutely continuous with respect to $|D u^s|$ and $v^\sigma$ which is singular with respect to $|Du|$. 


		\subsection{Convex envelopes and recession functions}
	
		Let 
		$W_1: \mathbb R^m \times \mathbb R^d \times \mathbb R^{m \times n}\to \mathbb R$, be defined as
		\begin{equation*}
		W_1(a,b,\xi):= f_1(a)f_2(b)+ W(\xi),
		\end{equation*}
		 where $f_1$, $f_2$ and $W$ are the functions introduced above and satisfying $(H_1),(H_2)$ and $(H_3)$. This entails, in particular that $W_1$ has linear growth with respect to the last two variables.
		
		We recall that the  {\it convex envelope } of a function $f_2:\mathbb R^d \to\mathbb R$ is the greatest convex function which is below $f_2$, and as already observed, by virtue of $(H_2)$, it coincides with the biconjugate of $f_2$;  cf.~\cite{Dacorogna}. In the same way the {\it quasiconvex envelope }of a function $W:\mathbb R^{d \times n}\to \mathbb R$ is the greatest quasiconvex function which is below $W$ and it admits the following representation:
			\begin{equation*}\label{QW}
		\mathcal QW(\xi) =
		{\rm inf}\left\{\frac{1}{|D|}\int_D W(\xi +\nabla \phi(y))dy\,\left|\, \phi \in W^{1,\infty}_0(D;\mathbb R^m)\right.\right\},
		\end{equation*}
		$D$ being a bounded Lipschitz domain. This definition is independent of $D$ and we say that $W$ is quasiconvex if $W=QW$.
%
		
	Integral functionals with such integrands of linear growth can be extended  to measures in a meaningful way if we assume that  there is a well defined recession function.
		Here, following \cite{KR,RS}, we define the (generalized) recession function of a function $h:\R^{m\times n}\to \R$ by
		\begin{equation}\label{recessiongeneral}
		h^\infty(z):=\limsup_{t \to +\infty,z'\to z}\frac{h(tz')}{t}
		\end{equation}
		We recall that if $h$ is quasiconvex with linear growth, in particular if $h=QW$, then $h^\infty$ inherits these properties,
		both $h$ and $h^\infty$ are globally Lipschitz continuous and 
		the $\limsup$ in \eqref{recessiongeneral} is a limit along all rank-$1$ lines, i.e., if both $z$ and $z'$ are restricted to the class of matrices of rank $1$.
		In case of a convex function $h:\R^{m}\to \R$ with linear growth, 
		for instance $h=f_2^{\ast\ast}$, its recession function automatically
		exists as the limit
		\begin{equation}\label{f2infty}
		h^\infty(b)=\lim_{t \to +\infty}\frac{h(tb)}{t},\quad\text{locally uniformly in $b\in \R^m$.}
		\end{equation} 
	
	\noindent Another technically useful property equivalent to \eqref{f2infty} is
		\begin{equation}\label{f2infty2}
		  \Big|\frac{1}{t}h(tb)-h^\infty(b)\Big|
			\leq \sigma(t)(|b|+1)
			\quad\text{for all $t>0$, $b\in \R^m$, where }\sigma(t)\underset{t\to\infty}{\longrightarrow}0.
		\end{equation}
		Here, $\sigma:(0,\infty)\to (0,\infty)$ is a non-increasing, continuous function independent of $b$.
		(For instance, given~\eqref{f2infty}, one may choose $\sigma(t):=\sup\big\{ \big|h^\infty(b)-\tfrac{1}{s}h(sb)\big|\,:\,|b|\leq 1,s\geq t\big\}$). 

		Due to the special form of $W_1$, the recession function of 
		$\CQ W_1(a,b,\xi):=f_1(a) f_2^{\ast \ast}(b) + \Q W(\xi)$  can be expressed as
		\begin{equation*}
		(\CQ W_1)^{\infty}(a,b,\xi)=f_1(a) (f_2^{\ast \ast})^{\infty}(b) + (\Q W)^\infty(\xi)
		\end{equation*}
		for every $(a,b,\xi)\in \mathbb R^m\times \mathbb R^d \times \mathbb R^{m \times n}$.
		
	\subsection{Nonlinear transformation of measures}\label{ssec:nonlintransmeas}
		Throughout, we will use the following notation  for scalar-, vector- or matrix valued measures transformed by a nonlinear function.
		Let $\nu\in \Mcal(\Lambda;\R^M)$ be such a measure on a Borel set $\Lambda\subset \R^n$ (e.g., $\Lambda=\overline\Omega$),
		and let $f:\Lambda \times \R^M\to \R$ with recession function $f^\infty(x,\cdot):=(f(x,\cdot))^\infty$ in the second variable, for fixed $x\in \Lambda$.
For every Borel set $A\subset \Lambda$, 
we then define 
\[
	f(\nu)(A):=\int_A \,df(x,\nu)(x),\quad df(x,\nu)(x):=f\Big(x,\frac{d\nu^a}{d\mathcal{L}^n}(x)\Big)\,dx+f^{\infty}\Big(x,\frac{d\nu^s}{d|\nu|^s}(x)\Big)d|\nu|^s(x),
\]
where $\nu=\nu^a+\nu^s$ is the Radon-Nikod\'{y}m decomposition of $\nu$ into an absolutely continuous and a singular component with respect to the Lebesgue measure $\mathcal{L}^n$. 
Obviously, this definition requires $f$ to be regular enough so that the integral above is well-defined, which will always be the case below.

\subsection{Continuous extension of functionals with respect to area-strict convergence}

The most natural way to extend a functional on $W^{1,1}$ or $L^1$ to $BV$ or $\Mcal$, respectively, is by continuous extension with respect to area-strict convergence, following \cite{KR}.
Here, given a Borel set $\Lambda$ which is either open or compact,
we say that a sequence $(v_k)\subset \Mcal(\Lambda;\R^d)$ converges to 
$v\in \Mcal(\Lambda;\R^d)$ \emph{area-strictly in $\Mcal(\Lambda;\R^d)$} if
\[
	v_k\rightharpoonup^* v~~~\text{in $\Mcal(\Lambda;\R^d)$}\quad\text{and}\quad
	\int_{\Lambda} da(v_k)(x)\underset{k\to\infty}{\to} \int_{\Lambda} da(v)(x),\quad
	\text{where }a(\cdot):=\sqrt{1+|\cdot|^2}
\]
is the density of the area functional.
Accordingly, for a sequence $(u_k)\subset BV(\Omega;\R^m)$ and a function $u\in BV(\Omega;\R^m)$, 
we say that $u_k\to u$ \emph{area-strictly in $BV(\Omega;\R^m)$} if $u_k\to u$ in $L^1(\Omega;\R^m)$ and 
$Du_k\to Du$ area-strictly in $\Mcal(\Omega;\R^{m\times n})$. 
The more classical, slightly weaker notion of \emph{strict convergence} is recovered if
we replace $a$ by $\tilde a(\cdot):=|\cdot|$ above. 

An important feature is that $L^1(\Lambda;\R^d)$ is dense in 
$\Mcal(\Lambda;\R^d)$ with respect to area-strict convergence in $\Mcal(\Lambda;\R^d)$,
$\Lambda=\Omega,\overline\Omega$ (the slightly more subtle latter case is shown in Lemma~\ref{lem:L1approxM}), 
and $W^{1,1}(\Omega;\R^m)$ is dense in 
$BV(\Omega;\R^m)$ with respect to area-strict convergence in $BV(\Omega;\R^m)$
(by mollifying as in, e.g., \cite[Section 5.3]{Ziemer}). As a consequence,
the following continuity result of \cite{KR} for functionals on $\Mcal$ and $BV$, respectively,
implies that these are actually the unique continuous extensions of their restrictions to $L^1$ and $W^{1,1}$, with respect to area-strict convergence.
\begin{Proposition}[cf.~{\cite[Thm.~3 and Thm.~4]{KR}}]\label{prop:areastrictcont}
Let $f_2$ and $W$ be continuous functions satisfying $(H_2)$ and $(H_3)$.
Then the functionals 
\begin{align*}
&	v\mapsto \int_{\overline\Omega} df^{\ast\ast}_2(v)(x),\quad \Mcal(\overline\Omega;\R^d)\to \R,\\
&	v\mapsto \int_{\Omega} df^{\ast\ast}_2(v)(x),\quad \Mcal(\Omega;\R^d)\to \R,~~\text{and}\\
& u\mapsto \int_{\Omega} dQW(Du)(x), \quad BV(\Omega;\R^m)\to \R
\end{align*}
are sequentially continuous with respect to area-strict convergence in 
$\Mcal(\overline\Omega;\R^d)$, $\Mcal(\Omega;\R^d)$ and $BV(\Omega;\R^m)$, respectively. 
\end{Proposition}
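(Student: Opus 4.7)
The proof is essentially a direct invocation of \cite[Thm.~3 and Thm.~4]{KR}. The plan is first to verify that our integrands $f_2^{\ast\ast}$ and $QW$ satisfy the hypotheses required there, then to apply those theorems on open domains, and finally to handle the case of measures on the compact set $\overline\Omega$ by zero-extension to a slightly larger open set. For the hypotheses: by $(H_2)$, the bipolar $f_2^{\ast\ast}$ is convex with $K^{-1}|b|\leq f_2^{\ast\ast}(b)\leq K(1+|b|)$, hence globally Lipschitz, and its recession function is a genuine locally uniform limit as in \eqref{f2infty}. By $(H_3)$ and standard properties of quasiconvex envelopes recalled after \eqref{recessiongeneral}, $QW$ is quasiconvex with linear growth, globally Lipschitz, and its generalized recession $(QW)^\infty$ is continuous and quasiconvex, with the $\limsup$ in \eqref{recessiongeneral} realized as a full limit along rank-one directions. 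These are precisely the standing hypotheses of \cite{KR}.

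For the two functionals on $\Omega$, one then applies the convex case \cite[Thm.~3]{KR} directly to the functional $v\mapsto\int_\Omega df_2^{\ast\ast}(v)$ on $\Mcal(\Omega;\R^d)$, and the quasiconvex case \cite[Thm.~4]{KR} to $u\mapsto\int_\Omega dQW(Du)$ on $BV(\Omega;\R^m)$. The latter fits the framework after recalling that area-strict convergence $u_k\to u$ in $BV(\Omega;\R^m)$ is by definition $L^1$-convergence together with area-strict convergence $Du_k\to Du$ in $\Mcal(\Omega;\R^{m\times n})$, which is precisely the input required by \cite[Thm.~4]{KR}.

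For the functional on $\Mcal(\overline\Omega;\R^d)$, I would fix a bounded open set $U\supset\overline\Omega$ and extend every $v\in\Mcal(\overline\Omega;\R^d)$ by zero to $\tilde v\in\Mcal(U;\R^d)$. Any $\varphi\in C_c(U)$ restricts to an element of $C(\overline\Omega)$, so weak-$\ast$ convergence transfers from $\overline\Omega$ to $U$; moreover the explicit identities
\[
	\int_U da(\tilde v)=\int_{\overline\Omega} da(v)+\mathcal{L}^n(U\setminus\overline\Omega),\qquad \int_U df_2^{\ast\ast}(\tilde v)=\int_{\overline\Omega} df_2^{\ast\ast}(v)+f_2^{\ast\ast}(0)\,\mathcal{L}^n(U\setminus\overline\Omega)
\]
show that both area-strict convergence and the value of the functional are preserved modulo constants independent of $v$. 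Continuity on $\Mcal(\overline\Omega;\R^d)$ thus reduces to continuity on $\Mcal(U;\R^d)$, already established in the previous step. The only genuine point of care is that this zero-extension creates no spurious singular mass on $\partial\Omega$, which is immediate because $\tilde v$ vanishes identically on $U\setminus\overline\Omega$. There is no deep obstacle in the argument itself; all the hard analysis is encapsulated in the results of \cite{KR}, and the present work amounts to checking their hypotheses and transferring the statement across the zero-extension in the compact case.
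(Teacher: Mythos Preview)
Your proposal is correct and aligns with the paper's treatment: the paper does not give its own proof of this proposition but simply cites \cite[Thm.~3 and Thm.~4]{KR}, so your argument is exactly the intended one, with the added service of spelling out why the hypotheses of \cite{KR} are met and how the $\overline\Omega$ case follows by zero-extension. The latter detail is left implicit in the paper, and your handling of it is sound.
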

We remark that the convex or quasiconvex envelopes cannot be dropped in Proposition~\ref{prop:areastrictcont}, but convexity is exploited there only to ensure that the recession functions exist in a strong enough sense, ensuring  continuity of the integrands at infinity.

\subsection{Auxiliary results concerning sequences and approximation}	
The following result (cf. \cite[Lemma 2.31]{FL}) will be used in the proof of the lower bound of Theorem \ref{mainthm}.
\begin{Lemma}[Decomposition Lemma in $L^1$]
	\label{decompositionlemma}
	Let $(z_k)_k \subset L^1(\Omega;\mathbb R^d)$ be bounded. Then  
	$z_k$ can be decomposed as
	\begin{equation*}
	z_k= z_k^{\rm osc}+ z_k^{\rm conc},
	\end{equation*}
	with two bounded sequences $(z_k^{\rm osc})_k,(z_k^{\rm conc})_k \subset L^1(\Omega;\mathbb R^d)$ such that 
	$|z_k^{\rm osc}|$ is equiintegrable and $z_k^{\rm conc}$ is the purely concentrated part in the sense that
	$z_k^{\rm conc}\to 0$ in measure. 
	In fact, one may even assume that
		\[
		\Lcal^n(\{z_k^{\rm conc}\neq 0\}) \to 0\quad\text{as $k \to +\infty$}. 
	\]
	Moreover, whenever $f:\R^d\to \R$ is globally Lipschitz, 
	\[
		\|f(z_k)-f(z^{\rm osc}_k)- f(z^{\rm conc}_k)+ f(0)\|_{L^1(\Omega)}\to 0\quad\text{as $k \to +\infty$}.
	\]
\end{Lemma}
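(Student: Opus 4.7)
The plan is to construct the decomposition by pointwise cutoff at a carefully chosen, divergent level $M_k\to\infty$:
\[
 z_k^{\rm osc}:=z_k\,\chi_{\{|z_k|\leq M_k\}},\qquad z_k^{\rm conc}:=z_k\,\chi_{\{|z_k|> M_k\}}.
\]
This choice automatically yields $z_k=z_k^{\rm osc}+z_k^{\rm conc}$ with disjoint supports; Markov's inequality gives $\mathcal{L}^n(\{|z_k|>M_k\})\leq \|z_k\|_{L^1}/M_k\leq C/M_k\to 0$, which delivers simultaneously $z_k^{\rm conc}\to 0$ in measure and the stronger support property $\mathcal{L}^n(\{z_k^{\rm conc}\neq 0\})\to 0$. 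Moreover, disjoint supports make the final Lipschitz assertion trivial: at every $x\in\Omega$ at most one of $z_k^{\rm osc}(x), z_k^{\rm conc}(x)$ is nonzero, so $f(z_k)-f(z_k^{\rm osc})-f(z_k^{\rm conc})+f(0)\equiv 0$ pointwise and hence in $L^1$.

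The main obstacle, therefore, is the equiintegrability of $(|z_k^{\rm osc}|)$. The tension is that a fixed-level truncation yields a uniformly bounded, hence equiintegrable, family but leaves too much mass in $z_k^{\rm conc}$; letting $M_k$ grow too fast, conversely, allows the concentrating spikes of $z_k$ to reappear in $z_k^{\rm osc}$ and destroys equiintegrability. I will resolve this by invoking Chacon's biting lemma: up to a (non-relabeled) subsequence there exist measurable sets $B_k\subset\Omega$ with $\mathcal{L}^n(B_k)\to 0$ such that $(z_k\chi_{\Omega\setminus B_k})$ is equiintegrable. A mild refinement, obtained by enlarging $B_k$ if necessary to be of the form $\{|z_k|>M_k\}$ with $M_k\to\infty$ chosen by a diagonal extraction matching prescribed moduli $\varepsilon_j\to 0$ with threshold levels $N_j$ supplied by Markov's inequality ($\sup_k \mathcal{L}^n(\{|z_k|>N_j\})<\varepsilon_j$), permits us to take $B_k=\{|z_k|>M_k\}$ without losing the equiintegrability property.

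With this choice in hand, all four required properties follow at once: boundedness of both parts in $L^1$ from $|z_k^{\rm osc}|+|z_k^{\rm conc}|=|z_k|$; equiintegrability of $|z_k^{\rm osc}|$ by construction; $\mathcal{L}^n(\{z_k^{\rm conc}\neq 0\})\to 0$ by Markov; and the Lipschitz identity by disjoint supports. The essential technical input is thus Chacon's biting lemma, a classical consequence of the Dunford--Pettis characterization of weak compactness in $L^1$ applied to the families truncated at each fixed level, followed by a diagonal selection; a detailed proof is available in \cite{FL}.
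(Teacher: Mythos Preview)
The paper does not prove this lemma; it simply cites \cite[Lemma~2.31]{FL}. Your overall architecture---truncation at a divergent level $M_k$, giving disjointly supported pieces---is exactly the standard construction, and your observation that disjoint supports force $f(z_k)-f(z_k^{\rm osc})-f(z_k^{\rm conc})+f(0)\equiv 0$ pointwise (so the final assertion holds with equality, not merely as a limit) is correct and clean.

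There are, however, two genuine gaps in your justification of equiintegrability.

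\emph{First}, Chacon's biting lemma produces only a \emph{subsequence}, and you explicitly pass to one. The lemma as stated is for the full sequence $(z_k)_k$. You cannot repair this by setting $z_k^{\rm conc}=0$ on the discarded indices, since then $z_k^{\rm osc}=z_k$ there and equiintegrability along the full index set may fail. The full-sequence statement is true, but its proof (as in \cite{FL}) constructs $M_k$ directly by a diagonal argument on the tail masses $\beta_k(M):=\int_{\{|z_k|>M\}}|z_k|\,dx$, without passing through the biting lemma. For the paper's application (the lower bound in Section~\ref{lb}) a subsequence would in fact suffice, but that is not what you were asked to prove.

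\emph{Second}, the passage from the biting sets $B_k$ to superlevel sets $\{|z_k|>M_k\}$ is not what you describe. ``Enlarging $B_k$'' is misleading: for large $M_k$ the set $\{|z_k|>M_k\}$ is small by Markov and is in general neither a subset nor a superset of $B_k$. A correct argument (along your subsequence) is: choose $M_k\to\infty$ slowly enough that $M_k\,\Lcal^n(B_k)\to 0$ (e.g.\ $M_k:=\Lcal^n(B_k)^{-1/2}$), and split
\[
z_k\chi_{\{|z_k|\le M_k\}}=z_k\chi_{\{|z_k|\le M_k\}\setminus B_k}+z_k\chi_{\{|z_k|\le M_k\}\cap B_k}.
\]
The first term is dominated by the equiintegrable $|z_k\chi_{\Omega\setminus B_k}|$; the second satisfies $\|z_k\chi_{\{|z_k|\le M_k\}\cap B_k}\|_{L^1}\le M_k\,\Lcal^n(B_k)\to 0$, hence is equiintegrable. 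This fixes the step, but the subsequence issue remains.
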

In one-dimensional case, we also use the following lemma -- here stated for any dimension -- which allows us to manipulate boundary values.
\begin{Lemma}\label{boundaryc}
	Let $\Omega \subset \mathbb R^n$ be a  bounded open set and let $A \subset \subset\Omega$ be an open subset with Lipschitz boundary. Let $f_1:\mathbb R^m \to \R, f_2:\R^d \to \R $ and $W: \R^{m\times n}\to \R$ be continuous functions satisfying $(H_1)-(H_3)$. Consider
	$(u, v) \in BV(\Omega;R^m) \times \mathcal M(\Omega;\mathbb R^d)$ such that $|v|(\partial A) = 0$ and assume that 
	$(u_k,v_k) \subset W^{1,1}(A, \R^m)\times L^1(A;\R^d)$ is a sequence
	satisfying $u_k\overset{*}{\rightharpoonup} u$ in $BV(A; \mathbb R^m)$, 
	$v_k \overset{*}{\rightharpoonup} v$ in $\mathcal M(A;\mathbb R^d)$ and
	$$
	\lim_{k \to +\infty} \int_A(f_1(u_k)f_2(v_k)+ W(\nabla u_k))dx=l,$$
	for some $l < +\infty$. Then there exist a sequence 
	$(\bar u_k, \bar v_k) \subset W^{1,1}(A, \R^m)\times L^1(A;\R^d)$
	such that 
	$\bar u_k = u$ on $\partial A$ (in the sense of trace), $\bar u_k \rightharpoonup^* u$ in $BV(A ;\R^m)$, $\bar v_k \rightharpoonup^* v$ in $\mathcal M(A;\mathbb R^d)$, and
	$$
	\limsup_{k \to +\infty}\int_A(f_1 (\bar u_k)f_2(\bar v_k)+ W(\nabla\bar u_k))dx \leq l.
	$$
\end{Lemma}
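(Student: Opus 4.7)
The plan is to use a De Giorgi-type slicing construction, modifying $(u_k, v_k)$ in a thin collar near $\partial A$ so as to force the trace on $\partial A$ to equal $u$, without spoiling either the weak$^*$ convergence or the energy bound.

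First, I would select a fixed auxiliary function $\tilde u \in W^{1,1}(A;\R^m)$ whose trace on $\partial A$ equals that of $u$ and which is close to $u$ both in $L^1(A;\R^m)$ and strictly in $BV(A;\R^m)$; this is possible by standard $BV$-trace and approximation theory. For every integer $M$, fix nested Lipschitz subdomains $A^M_0 \subset\subset A^M_1 \subset\subset \cdots \subset\subset A^M_M \subset\subset A$, all lying inside a tubular collar of $\partial A$ of width $\delta_M \to 0$, with equispaced annular layers $L^M_i := A^M_i \setminus A^M_{i-1}$, together with cut-offs $\varphi^M_i \in C_c^\infty(A^M_i)$ satisfying $\varphi^M_i \equiv 1$ on $A^M_{i-1}$ and $|\nabla \varphi^M_i| \leq C M/\delta_M$. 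I would then define
\[
	\bar u^{M,i}_k := \varphi^M_i u_k + (1-\varphi^M_i)\tilde u, \qquad \bar v^{M,i}_k := \varphi^M_i v_k,
\]
so that $\bar u^{M,i}_k \in W^{1,1}(A;\R^m)$ has trace equal to $\tilde u|_{\partial A} = u|_{\partial A}$ on $\partial A$, and $\bar v^{M,i}_k \in L^1(A;\R^d)$.

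The energy then splits into three contributions on $A^M_{i-1}$, $L^M_i$, and $A\setminus A^M_i$. On the interior $A^M_{i-1}$ the integrand reproduces $f_1(u_k)f_2(v_k)+W(\nabla u_k)$; on the outer strip only $\tilde u$ appears and its energy vanishes as $\delta_M\to 0$ by absolute continuity of $|\nabla\tilde u|\in L^1$; on the layer, expanding $\nabla \bar u^{M,i}_k$ by the product rule and using $(H_1)$--$(H_3)$ the contribution is bounded by
\[
	C\int_{L^M_i}\bigl(1+|\nabla u_k|+|\nabla \tilde u|+|v_k|\bigr)\,dx+C\frac{M}{\delta_M}\int_{L^M_i}|u_k-\tilde u|\,dx.
\]
A pigeonhole argument over $i=1,\dots,M$, exploiting that $\sum_i\bigl(|Du_k|(L^M_i)+|v_k|(L^M_i)+|D\tilde u|(L^M_i)\bigr)\leq C$ uniformly in $k$, yields an index $i=i(k,M)$ for which the first term is $O(1/M)$; the cross term is then absorbed by a diagonal choice $M=M(k)\to\infty$ and $\delta_{M(k)}\to 0$ sufficiently slowly, using $u_k \to u$ and $\tilde u \to u$ in $L^1(A;\R^m)$ along the diagonal.

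For weak$^*$ convergence of the resulting sequences, any $\psi\in C_0(A)$ has compact support in $A$, hence for $M(k)$ large $\varphi^{M(k)}_{i(k)}\equiv 1$ on $\mathrm{Supp}\,\psi$, so $\int\psi\,\bar v_k=\int\psi\,v_k\to\int\psi\,dv$; meanwhile $\bar u_k\to u$ in $L^1(A;\R^m)$ by construction and $(D\bar u_k)$ is bounded in $\Mcal(A;\R^{m\times n})$, yielding $\bar u_k\overset{*}{\rightharpoonup}u$ in $BV(A;\R^m)$. The main technical obstacles are (a) constructing $\tilde u$ with the correct trace while approximating $u$ strictly in $BV$, and (b) handling the possible concentration of $v_k$ near $\partial A$; the latter is defeated purely by the pigeonhole, since one of $M$ disjoint layers must carry at most $C/M$ of the mass, independently of how $v_k$ concentrates, while the hypothesis $|v|(\partial A)=0$ ensures that the cut-off $\bar v_k=\varphi^M_i v_k$ still has the correct weak$^*$ limit $v$ in $\Mcal(A;\R^d)$.
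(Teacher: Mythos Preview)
The paper omits the proof of this lemma, referring instead to \cite[Lemma 2.2]{BFMbending} and \cite[Lemmas 4.4 and 4.5]{BZZ}, both of which carry out precisely the De Giorgi slicing/averaging construction you describe; your proposal is therefore the intended approach.

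One point deserves tightening. You first call $\tilde u$ a \emph{fixed} $W^{1,1}$ approximant of $u$, yet later invoke ``$\tilde u \to u$ along the diagonal'' to kill the cross term $\tfrac{M}{\delta_M}\int_{L^M_i}|u_k-\tilde u|\,dx$. With $\tilde u$ genuinely fixed this term does not vanish: after pigeonholing it behaves like $\tfrac{1}{\delta_M}\|u-\tilde u\|_{L^1(A)}$, which blows up as $\delta_M\to 0$. The standard remedy, used in the cited references, is to take a \emph{sequence} $\tilde u_j\in W^{1,1}(A;\R^m)$ with trace $u|_{\partial A}$ and $\tilde u_j\to u$ strictly in $BV$, run the slicing for each $j$, and then diagonalize over $j$, $M$ and $k$ simultaneously. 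Your phrase ``along the diagonal'' suggests you already have this in mind, but the write-up should make the dependence of $\tilde u$ on a further parameter explicit.
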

The proof is omitted, it is similar to \cite[Lemma 2.2]{BFMbending} and \cite[Lemmas 4.4 and 4.5]{BZZ}.

The following, essentially well-known approximation of an absolutely continuous measure by a purely concentrating sequence is a key ingredient for the construction of a recovery sequence in Theorem~\ref{mainthm}.
\begin{Lemma}\label{lem:conclemma}
Let $\Omega\subset\R^n$ be a bounded domain and let $\sigma\in\mathcal{M}(\bar\Omega;\R^d)$ be absolutely continuous. There is a sequence
$(v_\varepsilon)_\varepsilon\subset L^1(\Omega;\R^d)$ converging to zero in measure such that as $\varepsilon\to 0$, $v_\varepsilon\to\sigma$ weakly* and strictly  in $\mathcal{M}(\overline\Omega;\R^d)$.
In fact, $v_\varepsilon$ can be constructed as a function supported on balls of radius $r(\varepsilon)$ centered at finitely many points $x_j^\varepsilon$, $j=1,\ldots J(\varepsilon)$, 
such that $r(\varepsilon)\to 0$ as $\varepsilon\to 0$. 
\end{Lemma}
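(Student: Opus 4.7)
The plan is to approximate $\sigma$ by its piecewise-constant averages on a cube grid, then redistribute each cube's mass onto a much smaller ball inside it, so that the support shrinks to a set of vanishing Lebesgue measure while the mean on each cube is preserved.

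First, I would write $\sigma=g\,\Lcal^n\res\Omega$ with $g\in L^1(\Omega;\R^d)$, which is possible since $\sigma$ is absolutely continuous on $\overline\Omega$ and $\Lcal^n(\partial\Omega)=0$. For each $\varepsilon>0$, let $\{Q_j^\varepsilon\}_{j=1}^{J(\varepsilon)}$ be the family of cubes of side $\varepsilon$ from the standard $\varepsilon\Zb^n$-grid whose closures lie in $\Omega$, set $c_j^\varepsilon:=\dashint_{Q_j^\varepsilon}g(y)\,dy$ and $g_\varepsilon:=\sum_j c_j^\varepsilon\chi_{Q_j^\varepsilon}$; by Lebesgue differentiation $g_\varepsilon\to g$ in $L^1(\Omega;\R^d)$ (the uncovered boundary strip has vanishing Lebesgue measure as $\varepsilon\to 0$ and therefore absorbs only $o(1)$ of the $L^1$ mass of $g$). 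Next, choose any $r(\varepsilon)\in(0,\varepsilon/2)$ with $r(\varepsilon)/\varepsilon\to 0$, let $x_j^\varepsilon$ denote the center of $Q_j^\varepsilon$, $B_j^\varepsilon:=B(x_j^\varepsilon,r(\varepsilon))\subset Q_j^\varepsilon$, and define
\[
v_\varepsilon:=\sum_{j=1}^{J(\varepsilon)}\frac{|Q_j^\varepsilon|}{|B_j^\varepsilon|}\,c_j^\varepsilon\,\chi_{B_j^\varepsilon}\in L^1(\Omega;\R^d),
\]
which by construction is supported on finitely many balls of radius $r(\varepsilon)$ centered at the $x_j^\varepsilon$.

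All three claimed convergences then reduce to short computations. Since $\{v_\varepsilon\neq 0\}\subset\bigcup_j B_j^\varepsilon$,
\[
\Lcal^n(\{v_\varepsilon\neq 0\})\le J(\varepsilon)\omega_n r(\varepsilon)^n\le\omega_n|\Omega|\,\bigl(r(\varepsilon)/\varepsilon\bigr)^n\xrightarrow[\varepsilon\to 0]{}0,
\]
giving convergence to zero in measure. For weak-$*$ convergence, fix $\phi\in C(\overline\Omega)$; its uniform continuity together with $r(\varepsilon)\to 0$ yields
\[
\int_\Omega \phi\,v_\varepsilon\,dx=\sum_j c_j^\varepsilon|Q_j^\varepsilon|\dashint_{B_j^\varepsilon}\phi\,dy=\sum_j c_j^\varepsilon|Q_j^\varepsilon|\phi(x_j^\varepsilon)+o(1)=\int_\Omega g_\varepsilon\,\tilde\phi_\varepsilon\,dx+o(1),
\]
with $\tilde\phi_\varepsilon(x):=\phi(x_j^\varepsilon)$ for $x\in Q_j^\varepsilon$; since $g_\varepsilon\to g$ in $L^1$ and $\tilde\phi_\varepsilon\to\phi$ uniformly on $\bigcup_j Q_j^\varepsilon$, the right-hand side tends to $\int_\Omega \phi g\,dx=\int_{\overline\Omega}\phi\,d\sigma$. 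Finally, $|v_\varepsilon|(\overline\Omega)=\sum_j|c_j^\varepsilon||Q_j^\varepsilon|=\|g_\varepsilon\|_{L^1}\to\|g\|_{L^1}=|\sigma|(\overline\Omega)$, which combined with the established weak-$*$ convergence upgrades to strict convergence in $\Mcal(\overline\Omega;\R^d)$.

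In fact there is no deep obstacle here; the construction is essentially the standard concentration argument. The only point that must be monitored is the scale separation $r(\varepsilon)\ll\varepsilon$: it is precisely what simultaneously guarantees vanishing of the support (needed for convergence in measure) and allows $\phi$ to be replaced by its value at the cube center up to $o(1)$ (needed for weak-$*$ testing). Both constraints are satisfied by any admissible choice of $r$, so the scheme works without further tuning.
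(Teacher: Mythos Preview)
Your proof is correct and follows essentially the same concentration-on-small-balls strategy as the paper. The paper organizes the construction with two independent parameters---a partition scale $\ell$ and a concentration scale $\varepsilon$---passing through an intermediate sum of Diracs and then diagonalizing, whereas you couple the two scales via $r(\varepsilon)\ll\varepsilon$ from the outset and go directly to the limit, which slightly streamlines the argument but is not materially different.
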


\begin{proof}
We partition $\bar\Omega$ into a collection of mutually disjoint  sets, i.e.,  $P_\ell=\{\Omega^j_\ell\}_{j=1}^{J(\ell)}$ such that $\text{diam }(\Omega_j^\ell)<1/\ell$ and $\Omega_j^\ell\cap \Omega_k^\ell=\emptyset$ if $j\ne k$. Take $x^\ell_{j}\in\text{int }(\Omega^\ell_j)$,
\begin{align*}
z^\ell_{j\varepsilon}=
\begin{cases}
\sigma(\Omega^\ell_j)/(|\sigma|(\O^\ell_j) \varepsilon) & \text { if } |\sigma|(\O^\ell_j)\ne 0,\\
0 & \text{ otherwise,}
\end{cases}
\end{align*}

   and for every $j\in\{1,\ldots, J(\ell)\}$ such that $|\sigma|(\O^\ell_j)\ne 0$  consider $x_j^\ell \in \text{ int }\O^\ell_j$ and  define   $$N^\ell_{j\varepsilon}=\{x\in\Omega:\ |x-x_j^\ell|<(|\sigma|(\Omega^\ell_j)/\mathcal{L}^n(B(0,1))/|z_{\varepsilon j}^\ell|)^{1/n}\}\ .$$ If $j$ is such that  $\sigma(\O^\ell_j)=0$ we set $N^\ell_{j\varepsilon}=\emptyset$.
Put 
$$
w^\ell _\varepsilon(x)=\begin{cases}
z_{j\varepsilon}^\ell & \text {if } x\in N^\ell_{j\varepsilon}\\
0 &\text{ otherwise.}
\end{cases}
$$
Notice that 
\begin{align}\label{sigmamasspreserved}
\int_\Omega |w^\ell_\varepsilon(x)|\, dx = \sum_{j}\mathcal{L}^n(N^\ell_{j\varepsilon})|z^\ell_{j\varepsilon}|=\sum_j |\sigma|(\Omega_j^\ell) =|\sigma|(\bar\Omega)<+\infty 
\end{align} 
and that $w^\ell_{j\varepsilon}\to 0$ in measure as $\varepsilon\to 0$. 

If $\varphi\in C(\bar\Omega;\R^m)$ we get 
\begin{align*}
\lim_{\varepsilon\to 0}\int_\Omega w^\ell_{\varepsilon}\varphi(x)\, dx
&= \lim_{\varepsilon\to 0} \sum_{j=1}^{J(l)}\int_{N^\ell_{j\varepsilon}} z^\ell_{j\varepsilon}\varphi(x_j^\ell)\, dx
+\lim_{\varepsilon\to 0} \sum_{j=1}^{J(l)}\int_{N^\ell_{j\varepsilon}} z^\ell_{j\varepsilon} (\varphi(x)-\varphi(x_j^\ell))\, dx\\
&=\int_{\bar\Omega}\varphi(x)\,d\sigma^\ell(x)\ ,
\end{align*}
where $\sigma^\ell=\sum_{j=1}^{J(l)}\sigma(\O_j^\ell)\delta_{x_j^\ell}$.  Above, we exploited that
\begin{align*}\lim_{\varepsilon\to 0} \sum_{j=1}^{J(l)}\int_{N^\ell_{j\varepsilon}} z^\ell_{j\varepsilon} (\varphi(x)-\varphi(x_j^\ell))\, dx=0
\end{align*}
because $\varphi$ is uniformly continuous on $\bar\Omega$ and $\mathcal{L}^n(N^\ell_{j\varepsilon})=|\sigma|(\Omega^\ell_j)/|z^\ell_{j\varepsilon}|$.

Altogether we have that $w*-\lim_{\varepsilon\to 0} w^\ell_{\varepsilon}= \sigma^\ell$. 

On the other hand, w*-$\lim_{l\to\infty}\sigma^\ell=\sigma$.
A diagonalization argument with a suitable $\ell=\ell(\varepsilon)$ gives us a sequence $v_{\varepsilon}:= w^{\ell(\varepsilon)}_{\varepsilon}$ converging to zero in measure as well as weakly$^*$ to $\sigma$ as $\varepsilon\to 0$. Combining this with \eqref{sigmamasspreserved}, we also obtain that $v_{\varepsilon}\to \sigma$ strictly.
\end{proof}
Strict approximation of general measures by $L^1$ functions is also 
well-known, but we would like to stress here that this is possible even when boundary points can be charged:
\begin{Lemma}\label{lem:L1approxM}
Let $\Omega\subset \R^n$ be open and bounded. There exists
a family of linear operators $(I_j)_{j\in\Nb}:\Mcal(\overline\Omega;\R^d)\to L^1(\Omega;\R^d)$ with $I_j\circ I_j=I_j$, such that 
for all $v\in \Mcal(\overline\Omega;\R^d)$,
\begin{align}\label{Ij-1bounded}
	\int_\Omega |I_jv|\,dx\leq \|v\|_{\Mcal(\overline\Omega;\R^d)},\quad 
	\int_\Omega a(I_jv)\,dx\leq \int_{\overline\Omega}da(v)\quad\text{with}~a(\cdot):=\sqrt{1+|\cdot|^2},
\end{align}
and $I_jv \to v$ weakly$^*$, strictly  and area-strictly in $\Mcal(\overline\Omega;\R^d)$ as $j\to \infty$. 
If $v\in L^1(\Omega;\R^d)$ then $I_jv \to v$ strongly in $L^1(\Omega;\R^d)$.
\end{Lemma}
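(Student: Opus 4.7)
The plan is to build $I_j$ as a piecewise-averaging (conditional-expectation-type) operator relative to a finite partition $\mathcal{P}_j=\{E_j^\alpha\}_\alpha$ of $\overline\Omega$ into Borel cells of diameter at most $C\,2^{-j}$, each of $\mathcal{L}^n$-measure at least $c\,2^{-jn}$. For fixed $j$ I would start from the tiling of $\R^n$ by half-open dyadic cubes of side $2^{-j}$; exploiting that $\Omega$ is Lipschitz, the interior cone condition guarantees that, once $j$ is large, every cube meeting $\overline\Omega$ is either \emph{thick} (has Lebesgue measure at least $c\,2^{-jn}$ inside $\Omega$) or can be merged deterministically onto a nearby thick cube within distance $C\,2^{-j}$, e.g.\ by a fixed lexicographic rule. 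This produces a partition of $\overline\Omega$ into finitely many Borel pieces, all of positive Lebesgue measure.

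I then define
\[
I_j v := \sum_\alpha \frac{v(E_j^\alpha)}{\mathcal{L}^n(E_j^\alpha)}\chi_{E_j^\alpha\cap\Omega},\qquad v\in \Mcal(\overline\Omega;\R^d).
\]
Linearity is immediate, and since $\mathcal{L}^n(\partial\Omega)=0$, the mean of $I_jv$ over $E_j^\alpha\cap\Omega$ is again $v(E_j^\alpha)/\mathcal{L}^n(E_j^\alpha)$, so $I_j\circ I_j=I_j$. The total variation bound $\int_\Omega|I_jv|\,dx\le \|v\|_{\Mcal(\overline\Omega;\R^d)}$ follows from $\sum_\alpha |v(E_j^\alpha)|\le\sum_\alpha |v|(E_j^\alpha)=|v|(\overline\Omega)$. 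For the area bound, the convexity of $a$ together with the elementary inequality $a(x+y)\le a(x)+|y|$ (valid since $a^\infty=|\cdot|$) gives, cell by cell,
\begin{align*}
\mathcal{L}^n(E_j^\alpha)\,a\!\left(\tfrac{v(E_j^\alpha)}{\mathcal{L}^n(E_j^\alpha)}\right)
&\le \mathcal{L}^n(E_j^\alpha)\,a\!\left(\tfrac{v^a(E_j^\alpha)}{\mathcal{L}^n(E_j^\alpha)}\right)+|v^s|(E_j^\alpha)\\
&\le \int_{E_j^\alpha}a\!\left(\tfrac{dv^a}{d\mathcal{L}^n}\right)dx+\int_{E_j^\alpha}d|v^s|=\int_{E_j^\alpha}da(v),
\end{align*}
where Jensen's inequality is used in the first term of the last line; summing over $\alpha$ yields the second estimate in \eqref{Ij-1bounded}.

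For weak-$\ast$ convergence I test against an arbitrary $\varphi\in C(\overline\Omega;\R^d)$ and observe that
\[
\int_\Omega I_jv\cdot\varphi\,dx-\int_{\overline\Omega}\varphi\cdot dv
=\sum_\alpha \int_{E_j^\alpha}\Big(\Xint-_{E_j^\alpha\cap\Omega}\varphi\,dy - \varphi(x)\Big)\cdot dv(x),
\]
whose absolute value is at most the modulus of continuity of $\varphi$ at scale $C\,2^{-j}$, times $|v|(\overline\Omega)$, and hence vanishes as $j\to\infty$. Combined with the just-established upper bounds and the standard weak-$\ast$ lower semicontinuity of both $v\mapsto |v|(\overline\Omega)$ and $v\mapsto\int_{\overline\Omega}da(v)$ on $\Mcal(\overline\Omega;\R^d)$, this forces $\int_\Omega|I_jv|\,dx\to |v|(\overline\Omega)$ and $\int_\Omega a(I_jv)\,dx\to \int_{\overline\Omega}da(v)$, i.e.\ strict and area-strict convergence. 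Finally, if $v\in L^1(\Omega;\R^d)$, viewed as a measure on $\overline\Omega$ with no singular part, then $I_jv$ is precisely the conditional expectation of $v$ with respect to the $\sigma$-algebra generated by $\mathcal{P}_j$; since $\diam E_j^\alpha\to 0$, Lebesgue's differentiation theorem together with the uniform $L^1$-contraction (plus a density argument using continuous approximants) gives $I_jv\to v$ strongly in $L^1(\Omega;\R^d)$.

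The main obstacle I anticipate is the partition construction of the first paragraph: one must ensure that every cell carries a quantitatively positive $\mathcal{L}^n$-measure inside $\Omega$ in spite of the (possibly rough Lipschitz) boundary, \emph{and} make the merging rule deterministic so that $I_j$ is a genuine linear operator rather than one defined up to choices. The Lipschitz interior cone condition is exactly what makes this possible; once the partitions are in place, all remaining assertions reduce to Jensen-type cell estimates and standard weak-$\ast$ lower semicontinuity and martingale-type convergence results.
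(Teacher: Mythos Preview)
Your construction is essentially the same as the paper's---piecewise-constant averaging of $v$ over a fine cubical partition, followed by Jensen-type cell estimates for the bounds, weak$^*$ convergence by uniform continuity of test functions, and strict/area-strict convergence from the bounds plus lower semicontinuity. Your handling of the area bound via $a(x+y)\le a(x)+|y|$ followed by Jensen on the absolutely continuous part is more explicit than the paper's one-line appeal to convexity, and is a clean way to treat the singular part.

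The one point where you diverge is the boundary-cube handling. You invoke a Lipschitz interior-cone condition to merge thin cubes onto thick ones so that every cell has Lebesgue measure $\ge c\,2^{-jn}$ inside $\Omega$. But the lemma as stated assumes only that $\Omega$ is open and bounded, not Lipschitz; and in fact a uniform lower bound on cell measure is never needed---all that matters is $\mathcal{L}^n(E_j^\alpha\cap\Omega)>0$ so that the averages are defined. The paper's fix is simpler and assumption-free: for any half-open cube $Q$ that meets $\overline\Omega$ only along $\partial\Omega$ (i.e., $Q\cap\Omega=\emptyset$), reassign those boundary points to a neighbouring cube that does meet $\Omega$. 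Your concern that eccentric cells might obstruct the $L^1$ convergence is also unfounded: the density argument you sketch (continuous approximants plus the uniform $L^1$-contraction of $I_j$) already suffices, without any recourse to Lebesgue differentiation on the cells themselves.
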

\begin{proof}
The operator $I_j$ is defined as a piecewise constant interpolation on a suitable cubical grid of grid size $\tfrac{1}{j}$.
For	each $j\in \Nb$ and $z\in \Zb^n$, let
	\[
		\Gcal_j:=\big\{Q(j,z) \mid z\in \Zb^n\big\},\quad\text{with}\quad
		Q(j,z):=\tfrac{1}{j}(z+[0,1)^n).
	\]
	While these cubes form a pairwise disjoint covering of $\R^n$, it is possible that certain cubes intersect $\overline\Omega$ "from the outside"
	in the sense that for some $Q\in \Gcal_j$, $Q\cap \Omega= \emptyset\neq Q\cap \overline{\Omega}$. If $v$ charges $Q\cap \partial \Omega$, this contribution
	would be lost in our construction below. We avoid the issue by slightly changing some cubes on the surface: If necessary, reassign surface points from an "outer" cube intersecting $\overline\Omega$ as above to a neighbor cube intersecting $\Omega$. This leads to
	\[
		\tilde{\Gcal}_j:=\big\{\tilde{Q}(j,z) \mid z\in \Zb^n\big\},
	\]
	where the $\tilde{Q}(j,z)$ are chosen	
	as pairwise disjoint Borel sets such that 
	$Q(j,z)\subset \tilde{Q}(j,z) \subset \overline{Q(j,z)}$, $\R^n=\bigcup \tilde{\Gcal}_j$ and
	\begin{align}\label{Q-nooutercontact}
		\tilde{Q}(j,z)\cap \overline{\Omega}\neq \emptyset ~\Longrightarrow~ \tilde{Q}(j,z)\cap \Omega\neq \emptyset.
	\end{align}
	With the grid given by $\tilde{\Gcal}_j$, we define the piecewise constant interpolation $I_j: \Mcal(\overline\Omega;\R^d)\to L^1(\Omega;\R^d)$,
	\[
		v\mapsto I_jv\quad\text{with}\quad I_jv|_{\Omega\cap Q}:=\frac{v(\overline\Omega\cap Q)}{\Lcal^n(\Omega\cap Q)}
		~~\text{for all $Q\in \tilde{\Gcal}_j$}.
	\]
	As constructed, $I_j$ is a linear, continuous projection satisfying \eqref{Ij-1bounded} (cube by cube, by convexity of $|\cdot|$  and $a$,  using Jensen's inequality) and 
	$I_jv \rightharpoonup^* v$ in $\Mcal(\overline\Omega;\R^d)$ (notice that due to \eqref{Q-nooutercontact}, no contributions on $\partial\Omega$ are lost).
	Combined with \eqref{Ij-1bounded}, the latter implies that $I_jw \to w$ strictly  and area-strictly  in $\Mcal(\overline\Omega;\R^d)$. In case 
	$w\in L^1(\Omega;\R^d)$ with $|w|(\partial\Omega)=0$, we get $\limsup_j \int_{\Omega} |I_jw|\,dx\leq \int_{\Omega} |w|\,dx$ and $I_jw\rightharpoonup^* w$ in $\Mcal(\Omega;\R^d)$, which implies that
	$I_jw\to w$ strongly in $L^1$.
	\end{proof}

		\subsection{The localized energy and its relaxation}
	
	Let 
	$\mathcal A_r(\overline\Omega)$ be the family of all subsets of $\overline\Omega$ which are open with respect to the relative topology of $\overline\Omega$. 
	For each $A\in A_r(\overline\Omega)$,
	we introduce the localized energy $ \Fcal(\cdot,\cdot,A): BV(\Omega\cap A;\mathbb R^m)\times \mathcal{M}(A;\mathbb R^d)\to [0,+\infty]$, defined as
			\begin{align}
			\begin{aligned}
			\mathcal F(u,v,A):=\left\{\!\!\!\!
			\begin{array}{ll}
			\int_{\Omega\cap A} \big( f_1(u)f_2(v)+ W(\nabla u)\big)\,dx, &\hbox{ if } (u,v)\in W^{1,1}(\Omega\cap A;\mathbb R^n)\times L^1(\Omega\cap A;\mathbb R^m),\\ 
			\\
			+\infty,  &\hbox{ otherwise.}
			\end{array}
			\right.
			\end{aligned}
			\label{Flocalized}
			\end{align}
		Its \emph{relaxation}, i.e., its lower semicontinuous envelope with respect to weak$^*$ convergence, is the functional
		$\overline{\mathcal F}(\cdot,\cdot,A): BV(\Omega\cap A;\mathbb R^m)\times \mathcal M(A;\mathbb R^d)\to [0,+\infty]$
		defined as	
		\begin{align}\label{Flocrelax}
		\begin{aligned}
		\overline{\mathcal F}(u,v,A):=
		\inf\left\{\,\liminf_{k\to +\infty}
		\mathcal F(u_k,v_k,A)
		\left|
		\begin{array}{l}
		(u_k,v_k)\in W^{1,1}(\Omega\cap A;\mathbb R^m)\times L^1(\Omega\cap A;\mathbb R^d),\\
		(u_k,v_k)\overset{\ast}{\rightharpoonup} (u,v)\hbox{ in } BV(\Omega\cap A;\mathbb R^m)\times \mathcal M(A;\mathbb R^d)
		\end{array}
		\right.\!\!\!\right\},
		\end{aligned}
		\end{align}
Here, recall that 
		for $A\in \mathcal A_r(\overline\Omega)$, $\mathcal M(A;\mathbb R^d)$ is the dual space of 
	the continuous functions $\varphi:A\to \mathbb R^d$ with $\varphi=0$ on $\partial A\cap\Omega$.
Also notice that \eqref{Flocrelax} extends the notation $\overline{\mathcal F}(u,v)$ of \eqref{reprelaxed}, in the sense that
\[
	\overline{\mathcal F}(u,v)=\overline{\mathcal F}(u,v,\overline\Omega)
\]
		
	There are several other, equivalent representations of $\overline{\mathcal F}$ that will be useful to simplify our proofs.
	For now, we only collect the statements of these results. Their proofs are presented in Appendix~\ref{sec:app}.
		
	A first observation is that in \eqref{Flocrelax}, $f_2$ and $W$ can be replaced by $f_2^{\ast \ast}$ and $QW$, respectively:
\begin{Proposition}	\label{Frel=Frel**}
	Let $f_1, f_2$ and $W$ be continuous functions satisfying $(H_1)-(H_3)$, let $A\in \mathcal A_r(\overline \Omega)$ and consider the corresponding functional 
	$\mathcal F$ in \eqref{Flocalized}.
	Consider furthermore the relaxed functionals \eqref{Frelax} and
	\begin{align}\label{F**locrelax}
	\begin{aligned}
\overline{\mathcal F}_{\ast \ast}(u,v,A):=\inf\Big\{&\liminf_{k\to +\infty}\int_{\Omega\cap A} f_1(u_k)f_2^{\ast\ast}(v_k) dx+ \int_{\Omega\cap A} \Q W(\nabla u_k)dx: \\
&
(u_k,v_k)_k\subset W^{1,1}(\Omega\cap A;\mathbb R^m)\times L^1(\Omega\cap A;\mathbb R^d), \\
&
(u_k,v_k)\overset{\ast}{\rightharpoonup} (u,v)\hbox{ in } BV(\Omega\cap A;\mathbb R^m)\times \mathcal M(A;\mathbb R^d)
\Big\}.
\end{aligned}
	\end{align}
	Then, $\overline{\mathcal F}(\cdot,\cdot,\cdot)$ coincides with $\overline{\mathcal F}_{\ast \ast}(\cdot, \cdot, \cdot)$ in $BV (\Omega,\mathbb R^m)\times \mathcal M(\overline\Omega;\mathbb R^d)\times \mathcal A_r(\overline\Omega)$.
\end{Proposition}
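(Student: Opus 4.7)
The inequality $\overline{\mathcal F}_{\ast\ast}(\cdot,\cdot,A)\leq \overline{\mathcal F}(\cdot,\cdot,A)$ is immediate from the pointwise inequalities $f_2^{\ast\ast}\leq f_2$ and $\mathcal{Q}W\leq W$: every sequence admissible in \eqref{Flocrelax} is also admissible in \eqref{F**locrelax} with no-larger energy, so the infima satisfy $\overline{\mathcal F}_{\ast\ast}\leq \overline{\mathcal F}$.

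For the reverse direction, my plan is a \emph{de-relaxation} construction. Given $(u,v)$ and a nearly optimal recovery sequence $(u_k,v_k)\subset W^{1,1}(\Omega\cap A;\R^m)\times L^1(\Omega\cap A;\R^d)$ for $\overline{\mathcal F}_{\ast\ast}(u,v,A)$ with $(u_k,v_k)\overset{\ast}{\rightharpoonup}(u,v)$, I will build $(\tilde u_k,\tilde v_k)$ converging to the same weak-$\ast$ limits and satisfying
\begin{equation*}
	\limsup_{k\to\infty}\int_{\Omega\cap A}\bigl(f_1(\tilde u_k)f_2(\tilde v_k)+W(\nabla\tilde u_k)\bigr)\,dx
	\leq \liminf_{k\to\infty}\int_{\Omega\cap A}\bigl(f_1(u_k)f_2^{\ast\ast}(v_k)+\mathcal{Q}W(\nabla u_k)\bigr)\,dx,
\end{equation*}
which by the definition of $\overline{\mathcal F}$ yields $\overline{\mathcal F}(u,v,A)\leq \overline{\mathcal F}_{\ast\ast}(u,v,A)$.

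The construction consists of two independent modifications at a fine spatial scale $\delta\to 0$, combined by a diagonal argument. For the convex envelope of $f_2$, I apply Carath\'eodory's theorem and a Borel measurable selection to pick, for each $x\in\Omega\cap A$ and each $k$, vectors $b_i^k(x)\in \R^d$ and weights $\lambda_i^k(x)\in [0,1]$ ($i=1,\ldots,d+1$) with $\sum_i \lambda_i^k(x)=1$, $\sum_i \lambda_i^k(x) b_i^k(x)=v_k(x)$ and $\sum_i \lambda_i^k(x) f_2(b_i^k(x))\leq f_2^{\ast\ast}(v_k(x))+\eta_k$, where $\eta_k\to 0$. Then I partition $\Omega\cap A$ into cubes $Q^\delta_j$ of side $\delta$, subdivide each $Q^\delta_j$ into disjoint measurable pieces $Q^\delta_{j,i}$ of relative Lebesgue measure $\lambda_i^k(x^\delta_j)$ for a representative $x^\delta_j\in Q^\delta_j$, and set $\tilde v_k^\delta:=b_i^k(x^\delta_j)$ on $Q^\delta_{j,i}$. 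The standard oscillation argument gives $\tilde v_k^\delta-v_k\overset{\ast}{\rightharpoonup}0$ in $\Mcal(A;\R^d)$ as $\delta\to 0$ (picking each $x^\delta_j$ as a Lebesgue point of $v_k$), and since $f_1(u_k)\cdot f_2^{\ast\ast}(v_k)\in L^1$, the Lebesgue differentiation theorem together with $(H_1)$ yields
$\int_{\Omega\cap A} f_1(u_k)f_2(\tilde v_k^\delta)\,dx\to\int_{\Omega\cap A} f_1(u_k)f_2^{\ast\ast}(v_k)\,dx$ as $\delta\to 0$. For the quasiconvex envelope of $W$, I use the definition of $\mathcal{Q}W$ via $W^{1,\infty}_0$ test functions, a Vitali covering of $\Omega\cap A$ by small cubes, and a piecewise constant approximation of $\nabla u_k$ (by cube averages, again via Lebesgue differentiation) to construct a correction $\phi^\delta_k\in W^{1,\infty}_0(\Omega\cap A;\R^m)$ compactly supported inside the cubes (so that boundary values are preserved) with $\|\phi^\delta_k\|_{L^\infty}\to 0$ by scaling and $\int W(\nabla u_k+\nabla\phi^\delta_k)\,dx\leq \int \mathcal{Q}W(\nabla u_k)\,dx+o_\delta(1)$; the last step requires care, as $W$ is only continuous and of linear growth, which I handle by truncating $\nabla u_k$ to a bounded set (controlling the complement via the Decomposition Lemma~\ref{decompositionlemma} or standard equi-integrability) before applying the local correction.

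The main obstacle is controlling the coupling term $\int f_1(\tilde u_k)f_2(\tilde v_k)\,dx$ after both modifications are applied jointly. The decisive observation is that $\|\tilde u_k-u_k\|_{L^\infty}\to 0$ as $\delta\to 0$, so by the continuity of $f_1$ together with $(H_1)$, $\|f_1(\tilde u_k)-f_1(u_k)\|_{L^\infty}\to 0$ on the essential range of $u_k$. Combined with $f_2(\tilde v_k^\delta)$ being equibounded in $L^1$ by $(H_2)$ and the oscillation construction, a careful diagonalization $\delta=\delta(k)\to 0$ closes all error estimates. A minor issue near $\partial A\cap \Omega$ (when $A\in \A_r(\overline\Omega)$ includes boundary points of $\Omega$) is avoided by restricting the cube partitions to cubes fully contained in $\Omega\cap A$ and absorbing the thin boundary layer into the error, using that $v_k\in L^1(\Omega\cap A)$ does not charge $\partial\Omega\cap A$.
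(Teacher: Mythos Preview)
Your approach is correct in outline but takes a substantially different route from the paper's proof. The paper argues abstractly: it invokes the result of \cite{CRZ} that the \emph{weak} $W^{1,1}\times L^1$ lower semicontinuous envelope $\overline{\mathcal F}^w$ of $\mathcal F$ already equals $\int_{\Omega\cap A}\bigl(f_1(u)f_2^{\ast\ast}(v)+\mathcal{Q}W(\nabla u)\bigr)\,dx$ on $W^{1,1}\times L^1$. Since weak $W^{1,1}\times L^1$ convergence is a special case of weak-$\ast$ $BV\times\mathcal M$ convergence, one has $\overline{\mathcal F}\leq \overline{\mathcal F}^w$ on $W^{1,1}\times L^1$, and re-relaxing both sides over all admissible $(u_k,v_k)$ immediately gives $\overline{\mathcal F}\leq \overline{\mathcal F}_{\ast\ast}$ on all of $BV\times\mathcal M$. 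This is short and avoids any explicit construction; the price is relying on \cite{CRZ} as a black box. Your direct ``de-relaxation'' via Carath\'eodory for $f_2^{\ast\ast}$ and the definition of $\mathcal{Q}W$ for the gradient term is essentially what lies underneath \cite{CRZ}, so your argument is more self-contained but correspondingly longer.

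One point in your sketch deserves care. You claim $\|f_1(\tilde u_k)-f_1(u_k)\|_{L^\infty}\to 0$ from $\|\tilde u_k-u_k\|_{L^\infty}\to 0$, but $f_1$ is only continuous, not uniformly continuous on $\mathbb R^m$, and $u_k\in W^{1,1}$ need not be bounded. The fix is routine: for fixed $k$, the family $\bigl(f_2(\tilde v_k^\delta)\bigr)_\delta$ is equi-integrable (your oscillation construction preserves the $L^1$-mass of $f_2^{\ast\ast}(v_k)$ cube by cube), so you can first restrict to $\{|u_k|\leq M\}$ where $f_1$ is uniformly continuous, and control the remainder $\int_{\{|u_k|>M\}}|f_1(\tilde u_k)-f_1(u_k)|f_2(\tilde v_k^\delta)\,dx\leq (C_2-C_1)\int_{\{|u_k|>M\}}f_2(\tilde v_k^\delta)\,dx$ uniformly in $\delta$ by choosing $M$ large. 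With this adjustment your diagonalization closes.
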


In addition, $u_k\in W^{1,1}(\Omega;\R^m)$ can be replaced by $u_k\in BV(\Omega;\R^m)$ in the definition of $\overline{\Fcal}_{\ast\ast}$,
if we use an appropriate extension of $\Fcal_{\ast\ast}$:
\begin{Proposition}	\label{Frel=Frel**B}
In the situation of Proposition~\ref{Frel=Frel**},
we have
\begin{align}\label{F**locrelaxB}
	\begin{aligned}
\overline{\mathcal F}_{\ast \ast}(u,v,A)=\inf\Big\{&\liminf_{k\to +\infty}\int_{\Omega\cap A} f_1(u_k)f_2^{\ast\ast}(v_k) dx+ \int_{\Omega\cap A} d\Q W(D u_k)(x): \\
&
(u_k,v_k)_k\subset BV(\Omega\cap A;\mathbb R^m)\times L^1(\Omega\cap A;\mathbb R^d), \\
&
(u_k,v_k)\overset{\ast}{\rightharpoonup} (u,v)\hbox{ in } BV(\Omega\cap A;\mathbb R^m)\times \mathcal M(A;\mathbb R^d)
\Big\}.
\end{aligned}
	\end{align}
\end{Proposition}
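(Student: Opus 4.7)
The inequality $\overline{\Fcal}_{\ast\ast}(u,v,A)\geq$ (right-hand side of \eqref{F**locrelaxB}) is essentially tautological: each admissible pair $(u_k,v_k)\in W^{1,1}\times L^1$ appearing in the definition \eqref{F**locrelax} of $\overline{\Fcal}_{\ast\ast}$ is also admissible in \eqref{F**locrelaxB}, and since $Du_k=\nabla u_k\,\Lcal^n$ is absolutely continuous, the recession contribution in $\int d\Q W(Du_k)$ vanishes and the integral reduces to $\int \Q W(\nabla u_k)\,dx$. My plan therefore focuses entirely on the reverse inequality.

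For the reverse direction, I would fix $\varepsilon>0$ and a $BV\times L^1$ admissible sequence $(u_k,v_k)_k$ whose $\liminf$ energy lies within $\varepsilon$ of the right-hand side of \eqref{F**locrelaxB}, and manufacture from it a $W^{1,1}\times L^1$ admissible sequence with no larger $\liminf$ energy. Since $\Omega\cap A$ is open, standard mollification (cf.~\cite[Section 5.3]{Ziemer}) yields, for each fixed $k$, a sequence $u_k^j\in W^{1,1}(\Omega\cap A;\R^m)$ such that $u_k^j\to u_k$ area-strictly in $BV(\Omega\cap A;\R^m)$ as $j\to\infty$. Then Proposition~\ref{prop:areastrictcont} gives $\int d\Q W(Du_k^j)\to\int d\Q W(Du_k)$, while $u_k^j\to u_k$ in $L^1$ and (up to a subsequence) $\Lcal^n$-a.e.\ on $\Omega\cap A$. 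Boundedness of $f_1$ from $(H_1)$ combined with dominated convergence (dominating function $C_2\,f_2^{\ast\ast}(v_k)\in L^1$) then yields $\int f_1(u_k^j)f_2^{\ast\ast}(v_k)\,dx\to\int f_1(u_k)f_2^{\ast\ast}(v_k)\,dx$.

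A diagonal extraction now produces $j(k)$ such that $\tilde u_k:=u_k^{j(k)}$ satisfies $\|\tilde u_k-u_k\|_{L^1}\le 1/k$, $\bigl||D\tilde u_k|(\Omega\cap A)-|Du_k|(\Omega\cap A)\bigr|\le 1/k$, and both energy integrals deviate by at most $1/k$ from their $(u_k,v_k)$ counterparts. Uniform boundedness of $(|D\tilde u_k|(\Omega\cap A))_k$ together with $Du_k\weakstar Du$ in $\Mcal$ and metrizability of weak$^*$ convergence on norm-bounded balls of $\Mcal$ then forces $D\tilde u_k\weakstar Du$; combined with $\tilde u_k\to u$ in $L^1$, this gives $\tilde u_k\weakstar u$ in $BV$, so $(\tilde u_k,v_k)$ is admissible in \eqref{F**locrelax} with $\liminf$ energy at most the right-hand side of \eqref{F**locrelaxB} plus $\varepsilon$, and letting $\varepsilon\to 0$ concludes. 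The main technical point will be this simultaneous control of $L^1$-distance, total variation and both energy terms across the diagonal argument; it rests precisely on the fact that area-strict convergence $u_k^j\to u_k$ encodes strict convergence of the derivatives (hence uniform total-variation bounds) together with Proposition~\ref{prop:areastrictcont} to convert it into convergence of the $\Q W$-term. The interaction term is the easy one, since $v_k$ is left untouched and the uniform bound on $f_1$ from $(H_1)$ lets dominated convergence do the work.
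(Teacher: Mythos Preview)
Your proposal is correct and follows essentially the same route as the paper: area-strict approximation of each $u_k\in BV$ by $W^{1,1}$ functions, Proposition~\ref{prop:areastrictcont} for the $\Q W$-term, dominated convergence via $(H_1)$ for the interaction term, then a diagonal extraction. One small remark: the metrizability argument you invoke for $D\tilde u_k\weakstar Du$ is more than you need---simply combining $\|\tilde u_k-u_k\|_{L^1}\le 1/k$ with $u_k\to u$ in $L^1$ and the uniform total-variation bound already yields $\tilde u_k\weakstar u$ in $BV$ directly.
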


A key ingredient to obtain an integral representation of $\overline\Fcal$ via the blow-up method is the following preliminary result which shows that the localized relaxed functional is the restriction of a Radon measure on $\mathcal A_r(\bar\Omega)$.

\begin{Lemma}
	\label{lem:restrRadon}
		Let $\Omega \subset \mathbb R^n$ be an open bounded set with Lipschitz boundary and let $f_1,f_2$ and $W$ be as in \eqref{originalfunctional}, satisfying $(H_1)-(H_3)$.
 For every $(u,v)\in BV(\Omega;\mathbb R^m)\times \mathcal M(\overline \Omega;\mathbb R^d)$, the set function $\overline{\mathcal F}(u,v, \cdot)$ in \eqref{Flocrelax} is the trace of a Radon measure absolutely continuous with respect to $\mathcal L^n+|Du|+ |v|$.
\end{Lemma}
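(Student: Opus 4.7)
The plan is to apply the De~Giorgi--Letta criterion to the set function $\lambda(A):=\overline{\mathcal F}(u,v,A)$ on $\mathcal A_r(\overline\Omega)$: once I verify (i) monotonicity, (ii) inner regularity $\lambda(A)=\sup\{\lambda(A'):A'\subset\subset A\}$, (iii) subadditivity $\lambda(A\cup B)\leq\lambda(A)+\lambda(B)$, and (iv) an upper bound by the Radon measure $\mu_0:=\mathcal L^n\res\Omega+|Du|+|v|$, the conclusion is a standard consequence (cf.~\cite{DM}). Monotonicity is immediate since the integrand is non-negative and any admissible sequence on the larger set restricts to one on the smaller. Inner regularity is obtained by taking a nearly optimal sequence for $\lambda(A)$, restricting it to $A'\subset\subset A$, and letting $A'\nearrow A$; combined with the already-proved monotonicity this gives (ii).

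The real obstacle is subadditivity, which I would establish by a De~Giorgi slicing and glueing argument. Given $A,B\in\mathcal A_r(\overline\Omega)$ and $A'\subset\subset A$, I select near-optimal sequences $(u_k^A,v_k^A)$ and $(u_k^B,v_k^B)$ for $\lambda(A)$ and $\lambda(B)$, respectively, both tending to $(u,v)$ in the appropriate weak$^*$ sense. I partition the transition strip $A\setminus\overline{A'}$ into $N$ disjoint thin layers, choose smooth cut-offs $\varphi_i$ supported there with $|\nabla\varphi_i|\leq C N$, and consider the convex combinations
\[
u_k^{(i)}:=\varphi_i u_k^A+(1-\varphi_i)u_k^B,\qquad v_k^{(i)}:=\chi_{E_i}v_k^A+(1-\chi_{E_i})v_k^B,
\]
where the $E_i\supset\overline{A'}$ are chosen with $|v|(\partial E_i)=0$ so that no mass of the limit is lost on the interface. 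Averaging over $i\in\{1,\dots,N\}$ and picking the index of smallest transition cost, the linear-growth bound in $(H_3)$ together with $L^1$-convergence $u_k^A-u_k^B\to 0$ controls the cross-term $\nabla\varphi_i\otimes(u_k^A-u_k^B)$, while $(H_1)$--$(H_2)$ keep the glueing of the $v_k$ uniformly bounded; letting $k\to\infty$ and then $N\to\infty$ gives $\lambda(A'\cup B)\leq\lambda(A)+\lambda(B)$. Finally, letting $A'\nearrow A$ via inner regularity yields (iii). Matching traces at the interface, if needed, can be arranged by Lemma~\ref{boundaryc}.

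For the upper bound (iv), I construct an explicit competitor via area-strict approximation: by Lemma~\ref{lem:L1approxM} there exist $v_k\in L^1(\Omega;\R^d)$ with $v_k\to v$ area-strictly in $\mathcal M(\overline\Omega;\R^d)$, and by mollification there exist $u_k\in W^{1,1}(\Omega;\R^m)$ with $u_k\to u$ area-strictly in $BV(\Omega;\R^m)$. Applying the pointwise upper bounds from $(H_1)$--$(H_3)$,
\[
\mathcal F(u_k,v_k,A)\leq C_2K\Bigl(\mathcal L^n(\Omega\cap A)+\int_{\Omega\cap A}|v_k|\,dx\Bigr)+\kappa\Bigl(\mathcal L^n(\Omega\cap A)+\int_{\Omega\cap A}|\nabla u_k|\,dx\Bigr).
\]
Using strict (hence total-variation) convergence of $v_k$ and $Du_k$ -- and replacing $A$ by $A'\subset\subset A$ with $|v|(\partial A'\cap\Omega)=0$ before letting $A'\nearrow A$ -- the right-hand side converges to $(C_2K+\kappa)\mathcal L^n(\Omega\cap A)+\kappa|Du|(\Omega\cap A)+C_2K|v|(A)\leq C\mu_0(\overline{A})$, so $\lambda(A)\leq C\mu_0(\overline A)$. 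Combined with (i)--(iii), this yields a Radon measure absolutely continuous with respect to $\mu_0$, as asserted.
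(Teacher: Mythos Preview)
Your overall strategy via De~Giorgi--Letta is sound and close to the paper's route, and your subadditivity and upper-bound arguments are essentially correct (the paper uses the same De~Giorgi slicing with cut-offs for subadditivity, and the same growth estimate $\overline{\mathcal F}(u,v,A)\leq C(\mathcal L^n(A)+|Du|(A)+|v|(A))$ for the upper bound). However, your argument for inner regularity (ii) is a genuine gap.

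Restricting a nearly optimal sequence $(u_k,v_k)$ for $\lambda(A)$ to $A'\subset\subset A$ only yields $\lambda(A')\leq\liminf_k\int_{A'\cap\Omega}(\ldots)\leq\liminf_k\int_{A\cap\Omega}(\ldots)\approx\lambda(A)$, i.e., monotonicity again, not the nontrivial direction $\lambda(A)\leq\sup_{A'\subset\subset A}\lambda(A')$. The latter cannot be obtained by restriction; it requires control of the energy in the boundary layer $A\setminus\overline{A'}$. The correct order of business is to first prove the nested subadditivity estimate $\lambda(A)\leq\lambda(B)+\lambda(A\setminus\overline U)$ for $U\subset\subset B\subset\subset A$ (your slicing argument does this), then combine it with the upper bound $\lambda(A\setminus\overline U)\leq C\mu_0(A\setminus\overline U)$ from (iv) and the inner regularity of the Radon measure $\mu_0$ to conclude that $\lambda(A\setminus\overline U)\to 0$ as $U\nearrow A$. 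This is exactly how the paper proceeds (using Lemma~\ref{Measure representation}, which packages the conditions as nested subadditivity plus the layer-smallness condition $\lambda(A\setminus\overline U)<\varepsilon$). Note also that the paper's comparison lemma implicitly uses superadditivity on disjoint sets, which you omit; it is easy (restrict a recovery sequence for the union), but should be stated. Once you reorder your proof so that inner regularity is derived from subadditivity and the upper bound rather than asserted independently, the argument goes through.
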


\section{Proof of Theorem \ref{mainthm}}\label{mt}
We start observing that the growth assumptions $(H_1)$ and the structure of the functional \eqref{originalfunctional} ensure that   sequences $(u_k,v_k)_k\subset W^{1,1}(\Omega;\mathbb R^m)\times \mathcal M(\overline{\Omega};\mathbb R^d)$ with $\sup_{k} F(u_k,v_k)<+\infty$ have  (non-relabeled) subsequences  $\nabla u_k \overset{\ast}{\rightharpoonup} m\in \mathcal M(\Omega;\mathbb R^m)$ and $v_k\overset{\ast}{\rightharpoonup} v$ in $\mathcal M(\overline{\Omega};\mathbb R^d)$. Up to imposing some boundary conditions or removing constants by working in a quotient space we can assume without loss of generality that  $u_k \overset{\ast}{\rightharpoonup} u$ in $BV(\Omega;\mathbb R^m)$.

The proof of Theorem \ref{mainthm} will be achieved in two main steps, first we find  a lower bound and then we prove that it is sharp.

\subsection{Lower bound}\label{lb}
	
We will now show that
\begin{align}\label{lowerbound}
\begin{aligned}
				\overline{\mathcal F}(u,v)\geq &
				\int_\Omega \mathcal{Q} W(\nabla u)\,dx 
				+\int_\Omega (\mathcal QW)^\infty\left(\frac{d Du^s}{d |D u^s|}\right)\,d |Du^s|
				\\ 
				&+\int_\Omega g\left(u,\frac{d v^a}{d {\mathcal L}^n}\right)\,dx
				+\int_{\bar\Omega}\f1min \,
				(f_2^{\ast\ast})^\infty\left(\frac{d v^s }{d|v^s|}\right)\,d|v^s|.
\end{aligned}
\end{align}
\begin{proof}[Proof of the lower bound]
Take $(u_k,v_k)\in W^{1,1}(\Omega;\R^m)\times L^1(\Omega;\R^d)$,  $u_k\rightharpoonup^* u$ in $BV(\Omega;\R^m)$, $v_k\rightharpoonup^* v$ in $\mathcal M(\overline\Omega;\R^d)$.
We prove individual lower bounds for the two parts of $\mathcal F$. Due to \cite{AD} and the trivial estimate $W\geq \mathcal{Q} W$, we have that
\begin{align}\label{BVrelax}
	\liminf_k \int_\Omega W(\nabla u_k)\,dx 
	\geq \int_\Omega \mathcal{Q} W(\nabla u)\,dx 
				+\int_\Omega (\mathcal QW)^\infty\left(\frac{d Du^s}{d |D u^s|}\right)\,d |Du^s|.
\end{align}
It remains to show that
\begin{align}\label{Mrelax}
	\liminf_k \int f_1(u_k)f_2(v_k)\,dx	\geq 
	\int_\Omega g\left(u,\frac{d v^a}{d {\mathcal L}^n}\right)\,dx 
	+\int_{\bar\Omega}\f1min \,	(f_2^{\ast\ast})^\infty\left(\frac{d v^s }{d|v^s|}\right)\,d|v^s|.
\end{align}
For $v_k$, we use the Decomposition Lemma in $L^1$, cf.~Lemma~\ref{decompositionlemma}: 
$v_k=v_k^{\rm osc}+v_k^{\rm conc}$, where $(v_k^{\rm osc})_k$ is equi-integrable and $|\{v_k^{\rm conc}\neq 0\}|\to 0$. Up to a subsequence,
\[
	v_k^{\rm osc}\rightharpoonup^* v_{\rm osc},\quad v_k^{\rm conc}\rightharpoonup^* v_{\rm conc}\quad\text{in $\Mcal(\bar\Omega;\R^d)$}.
\]
The equi-integrability of $(v_k^{\rm osc})_k$ and Dunford-Pettis' Theorem (see \cite[Theorem 2.54]{FL}) ensures that 
\begin{align} 
	v_k^{\rm osc}\rightharpoonup v_{\rm osc}\quad\text{in $L^1(\Omega;\mathbb R^d)$}\ .
\end{align}
We decompose $v_{\rm conc}=v_{\rm conc}^a+v_{\rm conc}^s$ into absolutely continuous and singular part with respect to the Lebesgue measure. 
Accordingly, the Lebesgue decomposition of the original limit measure $v=v^a+v^s$ is given by
\[
	v^a=v_{\rm osc}+ v_{\rm conc}^a\quad\text{and}\quad v^s=v_{\rm conc}^s.
\]
With a slight abuse of notation, we identify the absolutely continuous measures $v_{\rm osc}$ and $v_{\rm conc}^a$ with 
their respective densities $v_{\rm osc}=\frac{dv_{\rm osc}}{d\Lcal^n}$ and $v_{\rm conc}^a=\frac{dv_{\rm conc}^a}{d\Lcal^n}$ in $L^1(\Omega;\R^d)$ below.
Observe that
\begin{align}\label{lbcalc1-mk}
\begin{aligned}
	&\liminf_k \int_\Omega f_1(u_k)f_2(v_k)\,dx \geq \liminf_k \int_\Omega f_1(u_k)f_2^{\ast\ast}(v_k)\,dx \\
	&\quad \geq \liminf_k \int_{\{v_k^{\rm conc}=0\}} f_1(u_k)f_2^{\ast\ast}(v_k^{\rm osc})\,dx +\liminf_k \int_{\{v_k^{\rm conc}\ne 0\}} f_1(u_k)f_2^{\ast\ast}(v_k^{\rm osc}+v_k^{\rm conc})\,dx. 
\end{aligned}
\end{align}
We will estimate these two terms separately. For the first term, by \cite[Cor.~7.9]{FL}, exploiting the convexity of $f_2^{\ast\ast}$, we get that 
\begin{align}\label{lbcalc2-0a}
	\liminf_k \int_{\{v_k^{\rm conc}=0\}} f_1(u_k)f_2^{\ast\ast}(v_k^{\rm osc})\,dx \ge \int_\Omega f_1(u)f_2^{\ast\ast}(v_{\rm osc})\,dx\ .
\end{align}
 For the second term, we claim that 
\begin{align}\label{lbcalc2-0c}
\begin{aligned}
& \liminf_k \int_{\{v_k^{\rm conc}\ne 0\}} f_1(u_k)f_2^{\ast\ast}(v_k^{\rm osc}+v_k^{\rm conc})\,dx 
\ge \liminf_k \int_{\{v_k^{\rm conc}\ne 0\}} \f1min f_2^{\ast\ast}(v_k^{\rm conc})\,dx\\
&\qquad \begin{aligned}[t]
	& \ge \liminf_k \int_\O\f1min (f^{\ast\ast})^\infty(v_k^{\rm conc})\,dx\\	
	&\ge \int_{\Omega} \f1min\, (f_2^{\ast\ast})^\infty(v_{\rm conc}^a)\,dx + \int_{\overline\Omega} \f1min\,(f_2^{\ast\ast})^\infty\Big(\frac{dv_{\rm conc}^s}{d|v_{\rm conc}^s|}\Big)\,d|v_{\rm conc}^s|(x), 
	\end{aligned}			
\end{aligned}	
\end{align} 
To see that the first line of \eqref{lbcalc2-0c} holds,  notice that $f_1(u_k)\ge \f1min>0$ and 
\begin{align}\nonumber 
\int_{\{v_k^{\rm conc}\ne 0\}}  |f_2^{\ast\ast}(v_k^{\rm osc}+v_k^{\rm conc})-f_2^{\ast\ast}(v_k^{\rm conc})|\,dx\le C\int_{\{v_k^{\rm conc}\ne 0\}}|v_k^{\rm osc}|\, dx \underset{k\to\infty}{\longrightarrow} 0,
\end{align}
because convex functions with linear growth are Lipschitz continuous; see \cite[Prop.~2.32]{Dacorogna}, for instance. 
The inequality in the second line of \eqref{lbcalc2-0c} (actually an equality, as a matter of fact) is essentially due \eqref{f2infty} and the fact that $(v_k^{\rm conc})$ is purely concentrating. More precisely, we exploit that 
\[
	\alpha_t:=\inf_{|y|=1} \frac{f_2^{\ast\ast}(ty)}{t(f_2^{\ast\ast})^\infty(y)}\underset{t\to+\infty}{\longrightarrow} 1
\] 
by \eqref{f2infty} and
\[
	\Big| \int_{\{|v_k^{\rm conc}|\leq t\}} (f_2^{\ast\ast})^\infty \big(v_k^{\rm conc}\big)\,dx \Big|  \leq
	(1+K)t ~ \Lcal^n(\{v_k^{\rm conc}\neq 0\})\underset{k\to\infty}\longrightarrow 0 \quad\text{for every $t>0$}
\]
(using $(H_2)$, $(f_2^{\ast\ast})^\infty(0)=0$ and the fact that $\Lcal^n(\{v_k^{\rm conc}\neq 0\})\to 0$ by Lemma~\ref{decompositionlemma}), 
whence
\[
\begin{aligned}
	\liminf_k \int_{\{v_k^{\rm conc}\ne 0\}} f_2^{\ast\ast}(v_k^{\rm conc})\,dx
	&\ge \lim_{t\to+\infty} \liminf_k \int_{\{|v_k^{\rm conc}|>t\}} \alpha_t ~ (f_2^{\ast\ast})^\infty(v_k^{\rm conc})\,dx \\
	&=\liminf_k \int_{\Omega} (f_2^{\ast\ast})^\infty(v_k^{\rm conc})\,dx.
\end{aligned}
\] 
Finally, in the last line of \eqref{lbcalc2-0c}, we applied the weak$^*$-lower semicontinuity of convex integral functionals on measures (see, e.g., \cite[Theorem 5.27]{FL}).

Combining \eqref{lbcalc1-mk}, \eqref{lbcalc2-0a} and \eqref{lbcalc2-0c}, we conclude that
\begin{align}\label{lbcalc2}
\begin{aligned}
	&\liminf_k \int_\Omega f_1(u_k)f_2(v_k)\,dx \\
	&\qquad \begin{aligned}[t]
	&\ge \begin{aligned}[t]
		\int_\Omega f_1(u)f_2^{\ast\ast}(v_{\rm osc})\,dx
		&+ \int_\Omega \f1min\, (f_2^{\ast\ast})^\infty(v_{\rm conc}^a)\,dx \\
		&+ \int_{\bar\Omega} \f1min\,(f_2^{\ast\ast})^\infty\Big(\frac{dv_{\rm conc}^s}{d|v_{\rm conc}^s|}\Big)\,d|v_{\rm conc}^s|(x)
	\end{aligned}	\\
		&\geq \begin{aligned}[t] \int_\Omega g(u,v^a)\,dx 
		+ \int_{\bar\Omega} \f1min\,(f_2^{\ast\ast})^\infty\Big(\frac{dv_{\rm conc}^s}{d|v_{\rm conc}^s|}\Big)\,d|v_{\rm conc}^s|(x),
		\end{aligned}	
	\end{aligned}
\end{aligned}
\end{align}
because at each $x$,
$b_1:=v_{\rm osc}(x)$ and $b_2:=v_{\rm conc}^a(x)$ are admissible for the minimization in \eqref{g}, the definition of $g$, where
$b:=v^a(x)=v_{\rm osc}(x)+v_{\rm conc}^a(x)$ for $\Lcal^n$-a.e.~$x$.
\end{proof}

\subsection{Upper bound}\label{ub}

By Proposition ~\ref{Frel=Frel**} and Proposition \ref{Frel=Frel**B}, we may assume that $f_2$ is convex and $W$ is quasiconvex. We must show that   the opposite inequality to \eqref{lowerbound} holds.

The proof will be divided into two steps.  First, we assume that $v \in L^1(\Omega;\mathbb R^d)$, and the general case $v \in \mathcal M(\overline\Omega;\mathbb R^d)$ will be considered afterward.

\noindent
{\bf First Step:}
	Let $u \in BV(\Omega;\mathbb R^m)$ and $v \in L^1(\Omega;\mathbb R^d)$. Fix $\eta >0$ and  choose, for $\mathcal L^n$ a.e. $x \in \Omega$, a decomposition of $v$ into two functions 
	\begin{align}	\label{vdecomp}
		v(x)=v_{\rm osc}^\eta(x)+ v_{\rm conc}^\eta(x)
	\end{align}
	almost optimal for the definition of $g$, i.e., such that
	\begin{equation}
	\label{gdecomp}
	g(u(x), v(x))+ \eta \geq f_1(u(x))f_2(v^\eta_{\rm osc}(x))+ \f1min f_2^\infty(v^\eta_{\rm conc}(x)).
	\end{equation}
	Here, notice that for each $x$, the set of admissible choices for $v^\eta_{\rm osc}(x)$ (thus fixing $v^\eta_{\rm conc}(x)=v(x)-v_{\rm osc}^\eta(x)$) is always non-empty (as $\eta>0$) and open (by continuity of $f_2$ and $f_2^\infty$). In addition, $u$ and $v$ are measurable and $g$ and $f_1$ are continuous.
	As a consequence, it is possible to choose $v^\eta_{\rm osc}$ as a measurable function (cf.~\cite[Lemma 3.10]{FK}, e.g.). By the coercivity of $f_2$ assumed in $(H_2)$, \eqref{gdecomp} then implies that	$v^\eta_{\rm osc},v^\eta_{\rm conc}\in L^1(\Omega;\R^d)$.
	
By Lemma \ref{lem:conclemma}, we can find a  sequence  of functions $(v^{\eta,\varepsilon}_{\rm conc})_{\varepsilon}\subset L^1(\Omega;\mathbb R^d)$ such that
\begin{align} \label{v-eta-eps_conc}
  v^{\eta,\varepsilon}_{\rm conc}\underset{\varepsilon\to 0}{\longrightarrow}v^\eta_{\rm conc}\quad\text{strictly (and therefore also weakly$^*$) in $\mathcal{M}(\bar\Omega;\R^d)$},
\end{align}
as well as $v^{\eta,\varepsilon}_{\rm conc}\to 0$ in measure.
	
	Let   $\delta >0$ and choose $u_{\rm min}^\delta \in \mathbb R^m$  such that 
	\begin{equation}
	\label{umine} 
	\f1min \leq f_1(u^\delta_{\rm min})\leq \f1min + \delta, 
	\end{equation} 
	\noindent where $\f1min=\inf f_1$ as defined in \eqref{f1umin}. 
	In the following, we will modify $u$ near the (small) sets where $v^{\eta,\varepsilon}_{\rm conc}\neq 0$,
	there replacing its value by $u^\delta_{\rm min}$.
	For that purpose, we use that in dimension $n\geq 2$, there exists a function
\[
	\varphi \in C^\infty(\overline{B_1}(0)\setminus \{0\})\cap W^{1,1}_0(B_1(0))\quad
	\text{such that $\varphi(y) \to +\infty$ as $y \to 0$,}
\]
 for instance, $\varphi(y)=\log (1-\log |y|)$. 
 For $s>0$,  let
	\begin{equation*}
	\varphi_s(y) := \left\{
	\begin{array}{ll}
	0 &\hbox{ if }\varphi(y)\leq \frac{1}{s},
	\\
	s\varphi(y)-1 &\hbox{ if } \frac{1}{s}<\varphi(y)< \frac{2}{s}, \\
	1& \hbox{ if }\varphi(y) \geq \frac{2}{s}.
	\end{array}
	\right.
	\end{equation*}	
	Notice that $0 \leq \varphi_s \leq 1$, and the support of $\varphi_s$ shrinks to $0$ as $s \to 0$, since
	$\varphi(x) \to +\infty$ as $ x \to 0$. In particular, $\|\nabla \varphi_s\|_{L^1}\to 0$ as $s \to 0$.
We define 
\begin{equation}\label{uepsilon} 
	 \tilde{u}_{\varepsilon,\delta}(x)  :=(1-h_\varepsilon(x)) u^{[1/\delta]}(x)+h_\varepsilon(x) u_{\rm min}^\delta,
\end{equation} 
where $u^{[1/\delta]}$ is the component-wise truncation of $u$ on the level $1/\delta$, i.e.,
for $u=(u^{(1)},\ldots,u^{(m)})$, $u^{[1/\delta]}=(u^{[1/\delta],(1)},\ldots,u^{[1/\delta],(m)})$ is defined as
\[
	u^{[1/\delta],(i)}(x):=\left\{\begin{array}{ll} 
		u^{(i)}(x) &\text{if}~~|u^{(i)}(x)|\leq \frac{1}{\delta}, \\
		\frac{1}{\delta} \frac{u^{(i)}(x)}{|u^{(i)}(x)|}\quad  &\text{if}~~|u^{(i)}(x)|>\frac{1}{\delta},
	\end{array}\right. \qquad i=1,\ldots,m,
\]

and,  with some $s(\eps)>0$  and 
$(x^\varepsilon_j)_{j,\varepsilon}\subset \Omega$, $J(1/\varepsilon)\in \Nb$ and $r(\varepsilon)$ 
given by Lemma \ref{lem:conclemma} when we applied it to get \eqref{v-eta-eps_conc},
\begin{align} \label{def-heps}
	h_\varepsilon(x):=\sum_{1\le j\le J(1/\varepsilon)} \varphi_{ s(\varepsilon) }(x-x^\varepsilon_j).
\end{align}
Here, we choose $s(\eps)>0$ such that $s(\eps)\to 0$ as $\eps\to 0$, but still slow enough so that
$\varphi(y)\geq 2/s(\eps)$ for all $|y|\leq r(\eps)$, whence
\begin{align}\label{heps-support}
	\varphi_{s(\varepsilon)}(y)=1\quad\text{for all $|y|\leq r(\eps)$}
\end{align}
by construction of $\varphi_s$.
For $\varepsilon>0$ small enough,  $h_\varepsilon$ has support contained in a union of disjoint balls centered at $x^\varepsilon_j$ with vanishing radii (as $\varepsilon\to 0$). 
Moreover, $0\leq h_\varepsilon (x) \leq 1$ and $\|\nabla h_\varepsilon\|_{L^1}\to 0$. 
 Consequently, for fixed $\delta$, we get that
\begin{align} \nonumber 
	\lim_{\varepsilon \to 0}\|\tilde{u}_{\varepsilon,\delta}- u^{[1/\delta]} \|_{L^1(\Omega)}=0 \quad\text{and}\quad
	\lim_{\varepsilon \to 0}\int_{\Omega} dQW(D \tilde{u}_{\varepsilon,\delta})(x)= \int_{\Omega} dQ W(Du^{[1/\delta]})(x).
\end{align}
Since $u^{[1/\delta]}\to u$ in $L^1$ and $\int_{\Omega} dQ W(Du^{[1/\delta]})\to \int_{\Omega} dQ W(Du)$ by dominated convergence, 
we can choose a diagonal subsequence $u_{\varepsilon}:=\tilde{u}_{\varepsilon,\delta(\varepsilon)}$ with $\delta(\varepsilon)\to 0$ slow enough so that 
\begin{align}\label{limitdQW}
	\lim_{\varepsilon \to 0}\|u_{\varepsilon}- u \|_{L^1(\Omega)}=0 \quad\text{and}\quad
	\lim_{\varepsilon \to 0}\int_{\Omega} dQW(D u_{\varepsilon})(x)= \int_{\Omega} dQ W(Du)(x).
\end{align}

On the other hand, in view of \eqref{vdecomp} and \eqref{v-eta-eps_conc},
it is clear that
$$
v^{\eta,\varepsilon}:=v^\eta_{\rm osc}+ v^{\eta,\varepsilon}_{\rm conc}
$$
converges to $v$ weakly* in $\mathcal{M}(\Omega;\mathbb R^d)$ if $\varepsilon\to 0$. 
Thus,
\begin{align*}
	\begin{aligned}
	\overline{\mathcal F}(u,v,\overline\Omega)& \leq \liminf_{\varepsilon \to 0} \mathcal F(u_\varepsilon, v^{\eta,\varepsilon}, \Omega)\\
	&=\liminf_{\varepsilon \to 0}\left(\int_{\Omega} dQW(Du_\varepsilon)(x) + \int_{\Omega} f_1(u_\varepsilon)f_2^{\ast\ast}(v^{\eta,\varepsilon})dx\right)
		\end{aligned}
	\end{align*}
	Concerning the right hand side, recalling \eqref{limitdQW},
	it is sufficient to focus on the last integral of the above inequality, for which we have that
\begin{align}
\begin{aligned}
	&\liminf_{\varepsilon \to 0}\int_{\Omega} f_1(u_\varepsilon)f_2^{\ast \ast} (v^{\eta, \varepsilon})dx\\
	&\quad \leq
	\liminf_{\varepsilon\to 0}\left(\int_\Omega
	f_1(u_\varepsilon)f_2^{\ast\ast}	(v^{\eta}_{\rm osc}) dx+ \int_\Omega
	f_1(u_\varepsilon)(f_2^{\ast\ast})^\infty(v^{\eta,\varepsilon}_{\rm conc})dx\right) \label{ineq2}
\end{aligned}
\end{align}
where it has been exploited  that (see \cite[formula (4.33), with $t=1$]{FL}),
\[
	f_2^{\ast \ast}(y+z)\leq f_2^{\ast \ast}(y)+ (f_2^{\ast \ast})^\infty(z), \hbox{ for every }y,z \in \mathbb R^d.
\]
	By construction of $u_\varepsilon$ and $v^{\eta,\varepsilon}_{\rm conc}$,  in particular \eqref{def-heps} and \eqref{heps-support},
	$u_\varepsilon\equiv u_{\rm min}^{\delta(\varepsilon)}$ on the support of $v^{\eta,\varepsilon}_{\rm conc}$.
	Since $f_1(u_{\rm min}^{\delta(\varepsilon)})\to \f1min$ as $\varepsilon\to 0$, 
	we can replace $f_1(u_\varepsilon)$ by $\f1min$ in the second integral on the right hand side of \eqref{ineq2}. This yields 
\begin{align}\label{327new}
\begin{aligned}
	&\liminf_{\varepsilon \to 0}\int_{\Omega} f_1(u_\varepsilon) f_2^{\ast \ast}(v^{\eta,\varepsilon})dx\\
	&\quad \leq \liminf_{\varepsilon \to 0}\left(\int_{\Omega}
	f_1(u_\varepsilon)f_2^{\ast\ast}	( v^{\eta}_{osc})dx +\int_{\Omega}\f1min(f_2^{\ast \ast})^\infty\left( v^{ \eta,\varepsilon}_{\rm conc}\right)dx\right)
	\\ 
	&\quad \leq \int_{\Omega}f_1(u)f_2^{\ast\ast}(v^{\eta}_{\rm osc})dx + \int_{\Omega}\f1min(f_2^{\ast\ast})^{\infty}(v^{\eta}_{\rm conc})dx
\end{aligned}
\end{align}
	where, in the first limit, we have used the Dominated Convergence Theorem together with bounds, and in the second 
	we used Reshetnyak's continuity theorem (see \cite[Theorem 2.39]{AFP}, e.g.), exploiting the strict convergence of $v^{ \eta,\varepsilon}_{\rm conc}$ to $v^{\eta}_{\rm conc}$ obtained from Lemma~\ref{lem:conclemma}. 
	
Hence, together with \eqref{limitdQW}, \eqref{327new} gives that 
\begin{align*}
\overline{\mathcal F}(u,v)\leq \int_\Omega g(u,v)dx +\int_\Omega \mathcal{Q} W(\nabla u)\,dx 
				+\int_\Omega (\mathcal QW)^\infty\left(\frac{d Du^s}{d |D u^s|}\right)\,d |Du^s|+\eta
\end{align*}
The arbitrariness of $\eta$ concludes the proof of this case.

	\noindent
	{\bf Second step:} Let $u \in BV(\Omega;\mathbb R^m)$ and $v \in \Mcal(\overline\Omega;\R^d)$. To invoke the first step, we use the approximation $I_jv$ of $v$ and some of its components by functions in $L^1$ provided in Lemma~\ref{lem:L1approxM}.
		
	The first step and the lower semicontinuity of $\overline{\mathcal F}$ ensure that for every $u \in BV(\Omega;\mathbb R^m)$ and $v \in \mathcal M(\overline\Omega;\mathbb R^d)$ 
	\begin{align}\label{lscub}
	\overline{\mathcal F}(u,v)\leq \liminf_j \int_{\Omega} g(u,I_jv)dx+ \int_\Omega d QW(Du)dx.
	\end{align} 
	As before for \eqref{gdecomp}, for any $\eta>0$, we can decompose the absolutely continuous part of $v$ almost optimally for the infimum defining $g$ in \eqref{g}, i.e., $v^a=v_{\rm osc}^{a,\eta}+v_{\rm conc}^{a,\eta}$ so that 
	\begin{equation}
	\label{gdecomp2}
	g(u(x), v^a(x))+ \eta \geq f_1(u(x))f_2(v^{a,\eta}_{\rm osc}(x))+ \f1min f_2^\infty(v^{a,\eta}_{\rm conc}(x))\quad \text{for a.e.~$x\in\Omega$}.
	\end{equation} 
	The first summand in \eqref{lscub} can be estimated as follows. 
	\begin{align}\label{lscub2}
	\begin{aligned}
	&\liminf_j\int_\Omega g(u, I_j v )\, dx \\
	&\leq \liminf_j \int_\Omega f_1(u)f_2^{\ast\ast}(I_j v_{\rm osc}^{a,\eta})+ f_1^{\rm min}( f_2^{\ast\ast})^\infty( I_j[v_{\rm conc}^{a,\eta}+v^s])\,dx\\
	&= \int_\Omega f_1(u)f_2^{\ast \ast}(v_{\rm osc}^{a,\eta})dx + \int_{\overline \Omega} f_1^{\rm min}(f_2^{\ast \ast})^\infty(v^{a,\eta}_{\rm conc})dx +f_1^{\rm min}(f_2^{\ast \ast})^\infty\left(\frac{d v^s}{d |v^s|}\right)d |v^s|\\
	&=\int_\Omega \left(f_1(u)f_2^{\ast \ast}(v^{a,\eta}_{\rm osc}) +f_1^{\rm min}(f_2^{\ast \ast})^\infty (v^{a,\eta}_{\rm conc})\right)dx + \int_{\overline \Omega} f_1^{\rm min}(f_2^{\ast \ast})^\infty\left(\frac{d v^s}{d |v^s|}\right) d |v^s|\\
	&\leq \int_\Omega g(u, v^a)dx+\int_{\overline \Omega} f_1^{\rm min}(f_2^{\ast \ast})^\infty\left(\frac{d v^s}{d |v^s|}\right) d |v^s|+ \eta.
	\end{aligned}
	\end{align}
	Here, for the first inequality, we used the definition of $g$ \eqref{g}.
	To pass to the limit as $j\to\infty$ in the second line of \eqref{lscub2}, Lebesgue's dominated convergence theorem has been used 
  for the first term (since $I_j v^a_1  \to v_1^a$ in $L^1$ and $f_1$ is bounded), and Reshetnyak's continuity theorem 
		(\cite[Theorem 2.39]{AFP}, e.g.) for the other (exploiting strict convergence and positive $1$-homogeneity of $(f_2^{\ast \ast})^\infty$).  
		The limit of the latter then splits because $v^a_{\rm conc}$ and $v^s$ are orthogonal measures. 
		The last inequality in \eqref{lscub2} is due to \eqref{gdecomp2}.
		
	Finally, combining \eqref{lscub} and \eqref{lscub2} gives
	$$
	\overline{ \mathcal F}(u,v)\leq \int_\Omega g(u,v^a)dx + \int_{\overline \Omega} f_1^{\rm min}(f_2^{\ast \ast})^\infty\left(\frac{d v^s}{d |v^s|}\right)d |v^s| + \int d QW(Du)(x) + \eta.
	$$ 
	As $\eta>0$ was arbitrary, this concludes the proof.

\begin{Remark}
	It is easily seen that an entirely analogous proof allows us to replace the energy density $W$ above by a function $f:\overline{\Omega}\times \mathbb R^m \times \mathbb R^{m\times n}\to \mathbb R$ satisfying the assumptions in \cite[Theorem A]{RS}, thus leading to the following representation 
	\begin{align*}
	\overline{\mathcal F}(u,v)=&\int_{\overline \Omega} \f1min (f_2^{\ast \ast})^\infty\left(\frac{d v}{d |v^s|}\right)d |v^s|+ \int_{\Omega}g\left(u,\frac{d v^a}{d \mathcal L^N}\right) dx \\
	&+\int_\Omega f(x,u,\nabla u)dx+ \int_\Omega K_f(x, u^+(x), u^-(x),\nu_u(x))d \mathcal H^{n-1}\\
	&+ \int_\Omega f^\infty\left(x, u(x),\frac{d D u^c}{d |Du^c|}\right)d |Du^c|,
	\end{align*}
	with the functions, $K_f$ and $f^\infty$ defined by \cite[page 2]{RS} and \cite[Definition 2.8]{RS}, respectively.
	We emphasize the same type of result could be obtained under the more stringent assumptions provided by \cite{FM2}.
\end{Remark}

\begin{Remark}[Properties of $g$]\label{rem:g}
\renewcommand{\labelenumi}{(\roman{enumi})}
\begin{enumerate}
\item According to the terminology of \cite{BB1}, for fixed $a$, $g(a,\cdot)$ is the \emph{infimal convolution} of $f_1(a)f_2^{\ast\ast}$ and $\f1min \,(f_2^{\ast\ast})^\infty$. 
\item The function $g(a,\cdot)$ is convex, as it is obtained by ``projecting'' a convex function defined on $\R^{2d}$ 
to one on $\R^{d}$ by minimizing out the other directions along a fixed linear subspace:
$(b_1,b_2)\mapsto f_1(a)f_2^{\ast\ast}(b_1)+\f1min \,(f_2^{\ast\ast})^\infty(b_2)$ is convex as the sum of convex functions, and $g(b)$ is its 
minimal value on the hypersurface $b+H$, with the fixed $d$-dimensional linear subspace $H:=\{(b_1,b_2)\in \R^{2d}\mid b_1+b_2=0\}\subset \R^{2d}$.
\item If $f_1(a)=\f1min$, the choice $b_1=b$ (and thus $b_2=0$) is optimal for the minimization in the definition of $g$, because
the expression to be minimized in \eqref{g} then can be interpreted as (the limit of) a convex combination:
\begin{align}
\begin{aligned}\nonumber
	&f_2^{\ast\ast}(b_1)+(f_2^{\ast\ast})^\infty(b_2)=\lim_{\sigma\to 0^+} (1-\sigma)f_2^{\ast\ast}(b_1)+\sigma (f_2^{\ast\ast})^\infty\Big(\frac{b_2}{\sigma}\Big)\\
	&\quad =\lim_{\sigma\to 0^+} (1-\sigma)f_2^{\ast\ast}(b_1)+\sigma f_2^{\ast\ast}\Big(\frac{b_2}{\sigma}\Big)\\
	&\quad \geq \lim_{\sigma\to 0^+} f_2^{\ast\ast}\Big((1-\sigma)b_1+\sigma \frac{b_2}{\sigma}\Big)=f_2^{\ast\ast}(b_1+b_2)
\end{aligned}
\end{align}
(see also \cite[p.~685]{BB1}). Conversely, if $b_1=b$ is optimal, then $g(a,b)=f_1(a)f_2^{\ast\ast}(b)\leq \f1min (f_2^{\ast\ast})^\infty(b)$,
the latter term being the competitor for the choice $b_2=b$. 
Since $(f_2^{\ast\ast})^\infty(b)$ approximates $f_2^{\ast\ast}(b)$ for large $|b|$, this is impossible for large $|b|$
whenever $\f1min<f_1(a)$. 
\end{enumerate}
\end{Remark}


\begin{Example}\label{ex:concentrate}
	Consider the functional 
	\[
		\mathcal F(u,v)=\int_\Omega (W(\nabla u)+f_1(u)f_2(v))\,dx
	\]
	subject to the constraints
	\[
		\int_\Omega u \,dx=|\Omega|, \quad\int_{\bar\Omega} v \,dx=1,
	\]
	with integrands given by
	\[
	W(\nabla u):=(|\nabla u|^2+1)^{\frac{1}{2}},\quad
	f_1(u):=2-\exp(-u^2),\quad\text{and}\quad f_2(v):=|v|.
	\]
	Notice that $W$ and $f_2$ are convex, $W^\infty=|\cdot|$, 
	$f_2=f_2^\infty=|\cdot|$, $\f1min=f_1(0)=1\leq f_1 \leq 2$ and
	\begin{align}\label{g-example}
		g(a,b)=\min_{b_1+b_2=b} \Big(f_1(a)|b_1|+\f1min \,|b_2|\Big)=|b|.
	\end{align}

	In this class, the associated functional ${\mathcal F}$ is coercive in 
	$W^{1,1}(\Omega)\times L^1(\Omega)$.
	The associated relaxed functional is
	\begin{align*}
		\overline{\mathcal F}(u,v)=\int_\Omega W(\nabla u)\,dx+\int_\Omega d|D^s u|+|v|(\bar\Omega),\quad (u,v)\in BV(\Omega)\times \mathcal{M}(\bar\Omega),
	\end{align*}
	subject to the 
	appropriately extended constraint 
	\begin{align}\label{ex:constraints}
	\int_\Omega u \,dx=|\Omega|, \quad v(\bar\Omega)=1.
	\end{align}
	Notice that $u$ and $v$ are fully decoupled in $\overline{\mathcal F}$, while $\mathcal F$ clearly had coupling in the second term.
	In addition, 
	\[
	\inf \big\{ 	\overline{\mathcal F}(u,v) \,\mid\,
	u\in BV(\Omega),~v\in \mathcal{M}(\bar\Omega),~\text{\eqref{ex:constraints} holds}
	\big\}
	=1=\overline{\mathcal F}(u^*,v^*),
	\]
	where the minimizers are fully characterized by
	\begin{align*}
	u^*=1,\quad 
	v^*\geq 0, \quad v^*(\bar\Omega)=1.
	\end{align*}
	In particular, purely singular measures, for example $v^*=\delta_{x_0}$ with some $x_0\in \bar\Omega$, appear among the minimizers. 	
	\end{Example}
	
	\begin{Remark}[Concentration can be forced for minimizing sequences]\label{rem:concentrations-are-natural}
	In Example~\ref{ex:concentrate}, for all limit states $(u,v)\in BV\times \Mcal$ with $u(x)\neq 0$ for a.e.~$x\in\Omega$ (in particular, for $u=u^*=1$),	
	each associated recovery sequence $v_k$ is purely concentrating in the sense that $v_k\to 0$ in measure, while simultaneously $\int_\Omega v_k(x)\,dx\to v(\bar\Omega)=1$.
	This holds for \emph{all} recovery sequences $(u_k,v_k)$ associated to $(u,v)$ with $u\neq 0$ a.e., even if $v$ is absolutely continuous with respect to $\Lcal^n$. In particular,
	we can choose $(u,v)=(u^*,v^*)$ for an arbitrary minimizer, and therefore $v_k$ is purely concentrating for all minimizing sequences of the original functional ${\mathcal F}$.
	
	To prove this concentration effect, 
	we briefly revisit the proof of the lower bound in Theorem~\ref{mainthm} 
	where now all estimates have to hold with equality. 
	Let $(u_k,v_k)\in W^{1,1}(\Omega)\times L^1(\Omega)$ be a recovery sequence, i.e., 
	such that $(u_k,v_k)\rightharpoonup^* (u,v)$ in $BV(\Omega)\times \Mcal(\bar\Omega)$ and $\lim_k \mathcal F(u_k,v_k)=\overline{\mathcal F}(u,v)$,
	and recall that we decomposed $v_k=v_k^{\rm osc}+v_k^{\rm conc}$ into a purely oscillating (equiintegrable) part and 
	a purely concentrating part, $v_k^{\rm osc}\rightharpoonup v_{\rm osc}$ in $L^1$ and $v_k^{\rm conc}\rightharpoonup^* v_{\rm conc}$ in $\Mcal$, respectively.
	As long as $a\neq 0$ and thus $f_1(a)>\f1min$, the minimum in the definition of $g(a,b)$ \eqref{g-example} is attained if and only if $b_1=0$ and $b_2=b$.
	As $u(x)\neq 0$ for a.e.~$x\in\Omega$, the final estimate \eqref{lbcalc2} 
	thus holds with equality if and only if 
	\[
		\text{$v_{\rm osc}(x)=0$ for a.e.~$x$.}
	\]
	In addition, the lower semicontinuity inequality of Ioffe's Theorem (essentially) used in \eqref{lbcalc2-0a} holds with equality, 
	namely, 
	\[
		\lim_k \int_\Omega f_1(u)|v_k^{\rm osc}|\,dx = \int_\Omega f_1(u)|v_{\rm osc}|\,dx.
	\]
	Since $f_1\geq 1$, this implies that $v_k^{\rm osc} \to 0=v_{\rm osc}$ strongly in $L^1$. Hence,
	$v_k=v_k^{\rm osc}+v_k^{\rm conc}$ is purely concentrating as claimed.
\end{Remark}
\begin{Remark}
	In Example~\ref{ex:concentrate}, if we replace $f_1$ by 
	\[
	\tilde{f}_1(x,u):=(x-x_0)^2+2-\exp(-u^2),\quad\text{with a fixed $x_0\in \bar\Omega$},
	\]
	(adding $(x-x_0)^2$ to the original $f_1$ -- a situation not covered by our theorems, but it is easy to see they extend to this case), $(u^*,\delta_{x_0})$ 
	is still a minimizer with the same minimal energy,
	with $u^*$ as defined the example and the Dirac mass $\delta_{x_0}$ at $x_0$. However, the minimizer is now unique,
	as it is clearly optimal to concentrate all the mass of $v$ at $x=x_0$ where $\tilde{f}_1^{\rm min}(x)=(x-x_0)^2+1$ is minimal.
\end{Remark}

\begin{Remark}[Relaxation by means of parametrized measures]
Alternatively, one can extend the functional $\mathcal{F}$ in \eqref{originalfunctional} by continuity  to a set of generalized Young measures and to define 
the relaxation by measures in this set or to use them just as a tool to derive relaxation in $BV$. We refer to \cite{BKK} for such  approach or to \cite{KKK} where 
generalized Young measures allowing for treatment of oscillations, concentrations, and discontinuities were developed. This would allow us to give a different proof of Theorem~\ref{mainthm}. Nevertheless, the requirement that the recovery sequence $(\tilde u_k)_k$ can be first chosen to provide the correct limit for $(\int_\Omega W(\nabla \tilde u_k(x))\, {\rm d } x)_k$ as in \cite{FM2} which is then carefully modified to recover also the other term in the functional is essential and cannot be currently avoided.
\end{Remark}

\section{The 1D case: Proof of Theorem~\ref{mainthm-1d}}\label{oneD}

The measure representation obtained in Proposition \ref{lem:restrRadon} for $\overline{\mathcal F}(u,v,\cdot)$ in \eqref{Flocrelax}, leads us to provide an integral representation for this functional in any dimension. In the case $\Omega \subset \mathbb R$, the proof, achieved by showing a double inequality, will make use of the blow-up method introduced in \cite{FM1}, taking into account the specific structure of any Radon measure defined on subsets of $\mathbb R$.

In the one-dimensional case, it turns out that two cases have to be distinguished: the zero-dimensional contribution where the limiting measure charges individual points, and the "diffuse" rest. Here, notice that
for any finite measure $\nu$ on a set $I$, we can split

\[
	\nu=\nu^0+\nu^\diff,
\]
where $\nu^0$ is the zero-dimensional part of $\nu$ charging points, i.e., 
\begin{align}\label{atomdecomp}
	\nu^0:=\nu\lfloor_{S^0},\quad\text{where}~~S^0=S^0(\nu):=\big\{x\in I \,:\, |\nu|(\{x\})>0\big\}.
\end{align}
In particular, $\nu^0(\{x_0\})=\nu(\{x_0\})$ for all $x_0\in I$ (and zero for all but countably many),
$|\nu^0|=|\nu|^0$ and $|\nu|^\diff=|\nu^\diff|$. 
Also recall that for a $BV$-function $u$ defined on an interval $I$, 
\begin{align*}
Du= u' \mathcal L^1 + Du^j+ Du^c= u'\mathcal L^1+ [u] \mathcal H^0\lfloor{S_u}+ Du^c,
\end{align*} where for $\mathcal H^0$-a.e. $x \in S_u$ (the jump set of $u$), $[u](x):=u(x^+)-u(x^-)$, with $u(x^\pm):=\lim_{y \to x^\pm}u(y)$.
Accordingly, $Du^j=(Du)^0$ and $Du^{\rm diff}=u' \mathcal L^1+Du^c$.

The proof is based on the blow-up method. Let $u\in BV(\Omega;\R^m)$, $v\in \mathcal{M}(\bar\Omega;\R^d)$, and
let $\overline{\Fcal}_{u,v}$ denote the signed Radon measure on $\bar\Omega$ given by Lemma~\ref{lem:restrRadon},
uniquely extending the set function $\overline{\Fcal}(u,v,\cdot)$ introduced in \eqref{Flocalized}, \eqref{Flocrelax}, i.e.,
\begin{align*}
	\Fcuv(A)=\overline{\Fcal}(u,v,A)=\Gamma-\liminf \Fcal(u,v,A)\quad\text{for all $A\subset \overline\Omega$ relatively open}.
\end{align*}
In view of Lemma~\ref{lem:restrRadon}, it is enough
to rephrase and prove our claim \eqref{relax1d} in terms of appropriate localized statements for appropriate densities of $\Fcuv$.

With the abbreviation (and the implicit convention $|Du|(\partial\Omega)=0$)
\begin{align*}
	\theta:=\Lcal^1+|v|+|Du|,\quad \theta\in \mathcal{M}(\bar\Omega),
\end{align*}
we claim that
\begin{alignat}{2}\label{1dFdiffuse}
&\frac{d\Fcuv^\diff}{d\theta^\diff}(x_0)=f_1(u(x_0))\frac{df_2^{\ast\ast}\big(v^\diff\big)}{d\theta^\diff}(x_0)
+\frac{dW^{\ast\ast}\big((Du)^\diff\big)}{d\theta^\diff}(x_0)
&&\quad\text{for $\theta^\diff$-a.e.~$x_0\in \Omega$,} \\
\label{1dFatoms}
&\Fcuv^0(\{x_0\})=f_W^0(u(x_0^+), u(x_0^-), v^0(\{x_0\}))
&&\quad\text{for every }x_0 \in \Omega, \\
\nonumber
&\Fcuv^0(\{x_0\})=\Big(\inf_{z\in\R^m} f_W^0(u(x_0^+), z, v^0(\{x_0\})\Big)
&&\quad\text{for }x_0=\inf \Omega, \\
\nonumber
&\Fcuv^0(\{x_0\})=\Big(\inf_{z\in\R^m} f_W^0(z,u(x_0^-), v^0(\{x_0\})\Big)
&&\quad\text{for }x_0=\sup \Omega.
\end{alignat}
Here, $f_2^{\ast\ast}\big(v^\diff\big)$ and $W^{\ast\ast}\big((Du)^\diff\big)$ are the measures defined in Subsection~\ref{ssec:nonlintransmeas}, and
$f_W^0:\R^m\times \R^m\times \R^d\to \R$ is given by \eqref{jumpeffective}.
As to \eqref{1dFdiffuse}, also notice that for $\sigma^\diff$-a.e.~$x_0$, $x_0$ is not a jump point of $u$, and thus $u(x_0):=u(x_0^+)=u(x_0^-)$ is well defined.
\begin{Remark}\label{rem:1dconceffects}
It is easy to check that
\[
	f_W^0(u(x^+),u(x^-),0)=(W^{\ast\ast})^\infty([u](x))\quad \text{with $[u](x):=u(x^+)-u(x^-)$},
\]
i.e., the typical energy contribution of a jump of $u$ at $x$ if $v$ does not play any role. Moreover, if $a^+=a^-=:a$ and $b=0$, then $f_W^0(a,a,0)=0$, with $u\equiv a$ and $v=0$ as the obvious optimal choices for the infimum. For that reason, \eqref{1dFatoms} is trivially correct for all points $x_0$ with $\theta^0(\{x_0\})=0$, and it provides nontrivial information only for points with $\theta^0(\{x_0\})>0$. The latter includes the jump points of $u$, but the most interesting case is in fact $b:=v(\{x_0\})\neq 0$. In particular, it can happen that $f_W^0(a,a,b)<f_1(a)(f_2^{\ast\ast})^\infty(b)$ if
reducing the energy by locally creating an artificial peak in $u$ to reduce $f_1(u)$ at a point charged by $v$ is cheaper than the cost for the corresponding slope of $u$ paid in $(W^{\ast\ast})^\infty$. In any such case, associated recovery sequences will not converge strictly in $BV$. In this aspect, the case $n=1$ is fundamentally different to the case $n\geq 2$, where our construction for the upper bound in Subsection~\ref{ub} effectively yields a strictly converging recovery sequence for $u$.
\end{Remark}

\subsection{Lower bound}

Since $f_2\geq f_2^{\ast\ast}$ and $W \geq W^{\ast\ast}$, we can assume that $f_2$ and $W$ are convex.

Let $(u_k,v_k)\in W^{1,1}(\Omega;\R^m)\times L^1(\Omega;\R^d)$ such that $u_k \to u$ in $L^1(\Omega;\R^m)$ and $ v_k \overset{\ast}{\rightharpoonup}v$ in $\mathcal M(\bar\Omega;\R^d)$ and assume, up to a (not relabeled) subsequence, that the limit
$$
\lim_{k\rightarrow +\infty} \int_\Omega (f_1(u_k)f_2(v_k)+ W(u_k'))dx< +\infty
$$
exists. For every Borel set $B\subset \Omega$, define 

\begin{align}\label{def-muk}
\mu_k(B)=\int_B (f_1(u_k)f_2(v_k)+ W(u_k'))dx.
\end{align}
Since the functions are nonnegative and $(H_1)-(H_3)$ hold, the sequences $(\mu_k)$ and $(|v_k|+|Du_k|)$ of nonnegative Radon measures, being uniformly bounded in $\mathcal M( \Omega)$, admit two (not relabeled) subsequences weakly$^*$ converging to two nonnegative finite Radon measure $\mu$ and $\lambda$, i.e. 
\begin{align}\label{mu-and-lambda}
	\mu_k \overset{\ast}{\rightharpoonup} \mu \quad\text{and}\quad |v_k|+|Du_k|\overset{\ast}{\rightharpoonup} \lambda\quad \text{in $\mathcal M(\bar\Omega)$}. 
\end{align}
We decompose $\mu$ as the sum of two mutually singular measures as described above, $\mu=\mu^0+\mu^{\rm diff}$, such that $|\mu^0|$ is the atomic part concentrating on points and $\mu^\diff$ is the remaining diffuse part.

Notice that, 
as a consequence of the coercivity conditions in $(H_1)$--$(H_3)$,
$|v|+|Du|+\Lcal^1 << \mu$ and $|\mu|<<\Lcal^1+\lambda$. On the other hand, Lemma \ref{lem:restrRadon}, implies that a density of $\mu^{\rm diff}$ can be computed as a Radon-Nykodim derivative with respect to 
$\theta^\diff=\Lcal^1+|Du^\diff|+ |v^\diff|$, and, if $x_0$ is a jump point of $u$ (i.e. $|Du|(\{x_0\})>0$), or $|v|(\{x_0\})>0$, then $x_0 \in  S^0(\mu)$, set of atomic contributions defined in \eqref{atomdecomp}.

We now treat diffuse and atomic contributions in $\mu$ separately. Throughout, we use the intervals
\[
	I_{x_0,\varepsilon}:=(x_0-\eps,x_0+\eps)\quad \text{and}\quad I:=(-1,1)
\]
where $x_0\in \Omega$ and $\varepsilon>0$ is small enough so that $I_{x_0,\varepsilon}\subset\Omega$.

\subsection*{Diffuse contributions}

Let $x_0$ be a point with $0=\lambda(\{x_0\})$ such that the measures $\mu$, $f_2(v)$ and $W(Du)$ 
have a finite density with respect to $\theta^\diff$ at $x_0$, i.e,
$$
\frac{d\mu}{d\theta^\diff}(x_0)=\lim_{\varepsilon \rightarrow 0}\frac{\mu(I_{x_0,\varepsilon})}{\theta^\diff (I_{x_0,\varepsilon})}<+\infty,
$$
$$
	\frac{d f_2(v)}{d\theta^\diff}(x_0)=\lim_{\varepsilon \rightarrow 0}\frac{f_2(v)(I_{x_0,\varepsilon})}{\theta^\diff (I_{x_0,\varepsilon})}<+\infty,
\quad\text{and}\quad
\frac{d W(Du)}{d\theta^\diff}(x_0)=\lim_{\varepsilon \rightarrow 0}\frac{W(Du)(I_{x_0,\varepsilon})}{\theta^\diff (I_{x_0,\varepsilon})}<+\infty.
$$
In particular, $0=|\mu|(\{x_0\})=|Du|(\{x_0\})=|v|(\{x_0\})=W(Du)(\{x_0\})=f_2(v)(\{x_0\})$, 
$\frac{d f_2(v)}{d\theta^\diff}(x_0)=\frac{d f_2(v^\diff)}{d\theta^\diff}(x_0)$ and $\frac{d W(Du)}{d\theta^\diff}(x_0)=\frac{d W(Du^\diff)}{d\theta^\diff}(x_0)$.
Notice that these properties hold for $\theta^\diff$-a.e.~$x_0\in \Omega$.

To shorten the notation when calculating densities with respect to $\theta^\diff$,
below, we will use the abbreviation
\[
	\vartheta_{x_0,\eps}:=\theta^\diff(I_{{x_0},\eps}) \geq \Lcal^1(I_{{x_0},\eps})=2\eps.
\]
Take a sequence $\varepsilon\rightarrow0^{+}$ such that $\mu\left(  \{  x_{0}-\varepsilon, x_0+\varepsilon\} \right)= \lambda(\{x_0-\varepsilon, x_0+\varepsilon\}) =0$. 
Thus $\mu(\partial I_{x_0,\varepsilon})=0$, and by definition of $\mu$ and a changes of variables,
\begin{align}\label{1dlb-calc1}
\begin{aligned}
	&\frac{d|\mu|}{d\theta^\diff}(x_0) =  
	\lim_{\eps\to 0}\frac{\mu(I_{x_0,\varepsilon})}{\vartheta_{x_0,\eps}}
\\
	&=  \lim_{\eps\to 0}\lim_{k\to +\infty}\frac{1}{\vartheta_{x_0,\eps}}\int_{I_{x_0,\eps}}f_1(u_k(y))f_2(v_k(y))+ W(u_k'(y))\,dy
\\
	&=  \lim_{\eps\rightarrow 0}\lim_{k\rightarrow +\infty}\int_{I_{0,\eps\vartheta_{x_0,\eps}^{-1}}}
	f_1(u_k(x_0)+\vartheta_{x_0,\eps} w_{k,\eps}(x))f_2(\eta_{k,\eps}(x))+ W(w_{k,\eps}'(x))\,dx
\end{aligned}
\end{align}
	where 
\[
	w_{k,\eps}(x):=\frac{u_k(x_0+\vartheta_{x_0,\eps} x)-u_k(x_0)}{\vartheta_{x_0,\eps}}\quad\text{and}\quad
	\eta_{k,\eps}(x):=v_k(x_0+\vartheta_{x_0,\eps} x)
\]
Choosing a sequence $\hat{\eps}(k)>0$ with $\hat\eps(k)\to 0$ slow enough,
we see that for every sequence $\eps(k)\to 0^+$ with $\eps(k)\geq \hat{\eps}(k)$,
\begin{align}\label{1dlb-calc2}
\begin{aligned}
	\frac{d|\mu|}{d\theta^\diff}(x_0) &=  
	\lim_{k\rightarrow +\infty}\int_{I_{0,\eps(k)\vartheta_{x_0,\eps(k)}^{-1}}}f_1(u_k(x_0)+\vartheta_{x_0,\eps(k)} w_{k,\eps(k)})f_2(\eta_{k,\eps(k)})+ W(w_{k,\eps(k)}')\,dx
\end{aligned}
\end{align}
By coercivity of $W$ and the fact that $\frac{d|\mu|}{d\theta^\diff}(x_0)$ is finite, we infer that $\int_{I_{0,\eps(k)\vartheta_{x_0,\eps(k)}^{-1}}} |w_{k,\eps(k)}'|(x)\,dx$ is bounded, 
and since $w_{k,\eps(k)}(0)=0$, this entails that $w_{k,\eps(k)}$ is bounded on $I_{0,\eps(k)\vartheta_{x_0,\eps(k)}^{-1}}$ by a constant independent of $k$. 
Hence,
\begin{align}\label{1dlb-calc2b}
	\vartheta_{x_0,\eps(k)} w_{k,\eps(k)}(x)\to 0\quad
	\text{uniformly in $x\in I_{0,\eps(k)\vartheta_{x_0,\eps(k)}^{-1}}$}. 
\end{align}
In addition, we claim that 
\begin{align}\label{1dlb-calc2c}
	u_k(x_0)\to u(x_0),
\end{align}	
essentially because $|Du|\leq w^*-\lim |Du_k|\leq \lambda$ and $\lambda(\{x_0\})=0$. 
For a proof of \eqref{1dlb-calc2c}, first recall that $u_k\to u$ in $L^1$, and thus pointwise a.e.~for a subsequence.
We can therefore choose a sequence $x_j\to x_0$ with, say, $x_j\geq x_0$, such that 
\[
	u_k(x_j)\to u(x_j)\quad\text{as $k\to\infty$, for each $j$.}
\]
Since
\begin{align*}
\begin{aligned}
|u_k(x_0)-u(x_0)| &\leq |u_k(x_j)-u(x_j)|+|u_k(x_0)-u_k(x_j)|+|u(x_0)-u(x_j)|\\
	 &\leq |u_k(x_j)-u(x_j)|+
	(|Du_k|+|Du|)([x_0,x_j]),
\end{aligned}
\end{align*}
we obtain \eqref{1dlb-calc2c} passing to the limit, first as $k\to\infty$ and then as $j\to\infty$:
\begin{align*}
\begin{aligned}
	\limsup_{k\to \infty} |u_k(x_0)-u(x_0)| &\leq (\lambda+|Du|)([x_0,x_j]) \leq 2\lambda([x_0,x_j])
	\underset{j\to\infty}{\longrightarrow} 2\lambda(\{x_0\})=0.
\end{aligned}
\end{align*}
Combining \eqref{1dlb-calc2b} and \eqref{1dlb-calc2c} with the fact that
$f_1$ is uniformly continuous on bounded subsets of $\R^m$, we can replace the argument of $f_1$ by $u(x_0)$ in \eqref{1dlb-calc2}. 
Retracing our steps to \eqref{1dlb-calc1},
we conclude that
\begin{align}\label{1dlb-calc3}
\begin{aligned}
	\frac{d|\mu|}{d\theta^\diff}(x_0) &\geq   
\liminf_{\eps\to 0^+}\lim_{k\to +\infty}\frac{1}{\vartheta_{x_0,\eps}}\int_{I_{x_0,\eps}}f_1(u(x_0))f_2(v_k(y))+ W(u_k'(y))\,dy.
\end{aligned}
\end{align}
On the right hand side of \eqref{1dlb-calc3}, for fixed $\eps$, we can apply standard lower semicontinuity results with respect to weak$^*$-convergence in $\Mcal$ and $BV$, respectively, exploiting that $f_2$ and $W$ are convex and that
$|v|(\partial I_{x_0,\eps})=|Du|(\partial I_{x_0,\eps})=0$ for all but at most countably many $\eps$.
This yields that
\begin{align}\label{1dlb-calc4}
\begin{aligned}
	\frac{d|\mu|}{d\theta^\diff}(x_0) &\geq  
\liminf_{\eps\to 0^+} \frac{1}{\vartheta_{x_0,\eps}}\int_{I_{x_0,\eps}}\big(f_1(u(x_0))df_2(v)(y)+ dW(Du)(y)\big) \\
&
=f_1(u(x_0))\frac{df_2(v^\diff)}{d\theta^\diff}(x_0)+\frac{dW(Du^\diff)}{d\theta^\diff}(x_0).
\end{aligned}
\end{align} 
As this holds for all sequences $(u_k,v_k)$ admissible in the definition of $\overline{\Fcal}(u,v,\cdot)$,
\eqref{1dlb-calc4} implies 
the lower bound ("$\geq$") in \eqref{1dFdiffuse}. 

\subsection*{Contributions charging individual points in the interior} 

We claim that
\begin{align}
\mu^0(\{x_0\})\geq f_W^0\left(u(x_0^+), u(x_0^-), v(\{x_0\})\right) \quad \hbox{ for all }x_0 \in \Omega \cap S^0(\mu),
\label{mu01}
\end{align}
which directly implies the lower bound ("$\geq$") in \eqref{1dFatoms}.
Here, $f_W^0$ is defined by \eqref{jumpeffective}, i.e.,

\begin{align}
f_W^0(a^+,a^-,b)=
\inf_{\begin{array}{ll}
		u \in W^{1,1}((-1,1);\R^m), \\
		v\in L^1((-1,1); \R^d)\\
		u(-1)=a^-,~~u(1)=a^+,\\
		\int_{-1}^{1}v dx= b
\end{array}}\left\{\int_{-1}^{1}(f_1(u)(f_2^{\ast\ast})^\infty(v)+(W^{\ast\ast})^\infty(u'))dx\right \}.\label{jumpfj}
\end{align} 

Let $x_0\in S^0(\mu)$. Since $u\in BV$ on a one-dimensional domain, $u(x_0^+)$ and $u(x_0^-)$ exist in $\mathbb R^m$ (and outside of the jump set of $u$, they are equal).
In the following let $(\varepsilon)$ denote a sequence of positive reals converging to $0$ such that  
\begin{align}\label{eps-dontchargeboundary}
	|v|(\{x_0\pm \varepsilon\})=|Du|(\{x_0\pm \varepsilon\})= \mu(\{x_0\pm \varepsilon\})=0.
\end{align}
By virtue of dominated convergence, $\mu(\{x_0\})=\lim_{\varepsilon\to 0^+} \mu(I_{x_0,\varepsilon})$, and by
\eqref{mu-and-lambda}, \eqref{def-muk} and a change of variables, we have that
	\begin{align}\label{pointlb-1}
	 \mu(\{x_0\})	\geq \lim_{\varepsilon \to 0^+}\lim_{k\to +\infty}\varepsilon\int_I \left(f_1(u_k(x_0+\varepsilon y))f_2(v_k(x_0+ \varepsilon y)) + W( u'_k(x_0 +\varepsilon y))\right)dy,
	\end{align}
	where $I=(-1,1)$ as before.
Next, let 
\[
	v_{k,\varepsilon}(y):=v_k(x_0+ \varepsilon y). 
\]
Since $v_k \overset{\ast}{\rightharpoonup} v$ in $\mathcal M(\bar\Omega;\R^d)$
and $|v|(\{x_0\pm \varepsilon\})=0$ by \eqref{eps-dontchargeboundary}, for fixed $\varepsilon$,
\[
	\int_{I}\varepsilon v_{k,\varepsilon}(y) dy=\int_{I_{x_0,\varepsilon}} v_k \,dx
	\underset{k\to\infty}{\longrightarrow} v(I_{x_0,\varepsilon})
\]
	%
As $\varepsilon\to 0^+$, we conclude that
\begin{align*}
	 \lim_{\varepsilon\to 0^+}\lim_{k \to +\infty}\int_I\varepsilon v_{k,\varepsilon}(y)dy=v(\{x_0\}).
\end{align*}
	\noindent 
A diagonalization argument now ensures the existence of a sequence $\overline v_\varepsilon:=v_{k(\varepsilon),\varepsilon}$ such that
\begin{align}\label{rescalconv}
	 \lim_{\varepsilon \to 0^+}\int_{I}\varepsilon \overline v_\varepsilon(y)dy=v(\{x_0\}).
\end{align}
	
On the other hand, for $u_{k,\varepsilon}(y):=u_k(x_0+\varepsilon y)$, it is easily seen that 
\begin{align}\label{upm}
	 \lim_{\varepsilon\to 0^+}\lim_{k\to +\infty}\|u_{k,\varepsilon}-u^{\pm}\|_{L^1(I)}=0,
\quad\text{where}\quad
	 u^{\pm}(y)=\left\{
	 \begin{array}{ll}
	 u(x_0^+), \hbox{ if } y \geq 0,\\
	 u(x_0^-), \hbox{ if } y  < 0.
	 \end{array}
	 \right.
\end{align}
In addition, defining
\[
	\overline u_\varepsilon:= u_{k(\varepsilon),\varepsilon}, \quad\overline v_\varepsilon:= v_{k(\varepsilon),\varepsilon},
\]
with a suitable $k(\eps)\to+\infty$ (fast enough),
we can rewrite \eqref{pointlb-1} as
\begin{align}\label{pointlb-2}
	 \mu(\{x_0\})\geq\lim_{\varepsilon \to 0^+}\varepsilon\left(\int_I f_1(\overline{u}_\varepsilon(y)) f_2(\overline{v}_\varepsilon(y)) 
		+W\left(\frac{1}{\varepsilon} \overline u_\varepsilon'(y)\right)\right)dy.
\end{align} 
Further defining
\begin{align*}
	 \hat{v}_\varepsilon(y):= \varepsilon\overline{v}_\varepsilon(y)-\varepsilon\int_I\overline v_\varepsilon dy + v(\{x_0\}),
\end{align*} 
we observe that	
\begin{align}	\label{pointlb-3aux}
	 |\hat v_\varepsilon(y)-\varepsilon\overline{v}_\varepsilon(y)|
	= \left|\int_I \varepsilon\overline{v}_\varepsilon(y)dy - v(\{x_0\})\right| \to 0 \hbox{ as }\varepsilon\to 0.
\end{align} 	
Together with the global Lipschitz continuity of $f_2$, \eqref{pointlb-3aux} justifies replacing $\overline{v}_\varepsilon$ by $\frac{1}{\varepsilon}\hat v_\varepsilon$
in \eqref{pointlb-2}, which gives	
\begin{align*}	 
	 \mu(\{x_0\})\geq\lim_{\varepsilon \to 0^+} \int_I\left(f_1(\overline{u}_\varepsilon(y)) \varepsilon f_2\Big(\frac{1}{\varepsilon}\hat v_\varepsilon(y)\Big) +\varepsilon W\left(\frac{1}{\varepsilon} \overline u_\varepsilon'(y)\right)\right)dy.
\end{align*} 
Above, we may replace $f_2$ and $W$ by their recession functions
in \eqref{pointlb-3},  using \eqref{f2infty2} with
	$h=f_2$ and $h=W$ (both convex with linear growth) together with the fact that both $\hat{v}_\eps$ and $\bar{u}'_\eps$ are bounded in $L^1$.  Therefore,
\begin{align}	 \label{pointlb-3}
	\mu(\{x_0\})\geq
	\lim_{\varepsilon \to 0^+} \int_I\left(f_1(\overline{u}_\varepsilon(y))f_2^\infty(\hat v_\varepsilon(y)) +W^\infty( \overline u_\varepsilon'(y))\right)dy.
\end{align} 
	
As $\int_I \hat v_\varepsilon=v(\{x_0\})$ by construction, $\hat v_\varepsilon$ is admissible in the infimum defining $f_W^0$ in \eqref{jumpfj} (in place of $v$). Applying Lemma \ref{boundaryc} to the integrand  $f_1 f_2^\infty+ W^\infty$, we modify $\overline{u}_\varepsilon$ into a function ${\hat u}_\varepsilon$ which has the same values as $u^{\pm}$ on $\{\pm 1\}$, cf.~\eqref{upm}. 
	 Thus, the new function $\hat u_\varepsilon$ is also admissible in \eqref{jumpfj} (in place of $u$) and we obtain \eqref{mu01}
	from \eqref{pointlb-3},
	 more precisely,
	 \begin{align*}
	 \mu(\{x_0\})\geq
	 \lim_{\varepsilon \to 0^+} \int_I\left(f_1({\hat u}_\varepsilon(y))f_2^\infty(\hat v_\varepsilon) +W^\infty( {\hat u}_\varepsilon ' (y))\right)dy
	 \geq f_W^0\left(u(x_0^+), u(x_0^-), v(\{x_0\})\right).
	 \end{align*} 

\subsection*{Contributions charging boundary points} 

Let $x_0\in\partial\Omega$, say, $x_0=\inf \Omega$ (the other case is analogous). Passing to a subsequence if necessary, we may assume that
$z:=\lim u_k(x_0)$ exists. By extending $u_k(x):=u_k(x_0)$ and $v_k(x):=0$ for all $x<x_0$, which leads to 
$d\mu_k(x)=f_1(u_k(x_0))f_2(0)dx$, $d\mu(x)=f_1(z)f_2(0)dx$ and $d\lambda(x)=0$ for $x<x_0$,
we can argue as for interior points, with $u(x_0^-)=z$. 

\subsection{Upper bound}

Let $(u,v)\in BV(\Omega;\R^m)\times \Mcal(\overline\Omega;\R^d)$. 
As before, without loss of generality (see Proposition~\ref{Frel=Frel**}), $f_2$ and $W$ can be assumed to be convex, 
and we can use the representaion \eqref{F**locrelaxB} of $\overline{\mathcal F}=\overline{\mathcal F}_{\ast\ast}$ obtained in Proposition~\ref{Frel=Frel**B}. 
By Lemma \ref{lem:restrRadon} it is enough to prove the upper bounds for the density	of $\Fcuv$ with respect to the atomic and diffuse parts of $\theta=\theta^0+\theta^\diff$.
We therefore have to show that
\begin{alignat}{2}
		&\begin{aligned}\label{ub1d-diffuse}
			\frac{d \Fcuv}{d\theta^\diff}(x_0)\leq 
		f_1(u(x_0))\frac{d f_2(v)}{d\theta^\diff}(x_0)+ \frac{d W(Du)}{d\theta^\diff}(x_0)
		\end{aligned} 
		&& \quad\text{for $\theta^\diff$-a.e. $x_0\in \Omega$,}\\
		&\begin{aligned}\label{ub1d-points}
		\Fcuv(\{x_0\})\leq f_W^0\left(u^+(x_0), u^-(x_0), v(\{x_0\})\right)
		\end{aligned}
		&&\quad\text{for every $x_0\in \Omega$,}\\
		&\begin{aligned}
		\nonumber
		\Fcuv(\{x_0\})\leq 
		\inf_{z\in\R^m} f_W^0\left(z, u^-(x_0), v(\{x_0\})\right)
		\end{aligned}
		&&\quad\text{for $x_0= \sup\Omega$, and}\\
		&\begin{aligned}\nonumber 
		\Fcuv(\{x_0\})\leq 
		\inf_{z\in\R^m} f_W^0\left(u^+(x_0),z, v(\{x_0\})\right)
		\end{aligned}
		&&\quad\text{for $x_0= \inf\Omega$}.
\end{alignat}	
Here, recall that $\theta=\Lcal^1+|v|+|Du|\in \Mcal(\overline\Omega)$ (with $|Du|(\partial\Omega):=0$).

\subsection*{Diffuse contributions}
As before, we denote $I_{{x_0},\eps}:=(x_0-\eps,x_0+\eps)$ and
$\vartheta_{x_0,\eps}:=\theta^\diff(I_{{x_0},\eps})$.
We only consider $x_0\in \Omega$ such that the Besicovitch derivatives 
$\frac{d \overline{\mathcal F}(u,v,\cdot)}{d\theta^\diff}(x_0)$,
$\frac{d f_2(v)}{d\theta^\diff}(x_0)$ and $\frac{d W(Du)}{d\theta^\diff}(x_0)$ exist with finite values. In particular, $|v|(\{x_0\})=|Du|(\{x_0\})=0$, $x_0$ is not a jump point of $u$,
$u$ has a representative which is continuous at $x_0$ and $\vartheta_{x_0,\eps}\to 0$ as $\eps\to 0$.

For the proof of \eqref{ub1d-diffuse}, by Proposition~\ref{Frel=Frel**B} (with $f=f^{\ast\ast}$ and $W=QW$), it suffices to 
find a sequence $v_k\subset L^1(\Omega;\R^d)$ 
such that $v_k\rightharpoonup^* v$ in $\Mcal(\overline\Omega;\R^d)$ and
\begin{align}\label{1drec-diff-0}
\begin{aligned}
	&\liminf_{\eps\to 0}\limsup_{k\to\infty}\frac{1}{\vartheta_{x_0,\eps}}\int_{x_0-\eps}^{x_0+\eps}
	f_1(u)f_2(v_k)\,dx+dW(Du)(x) \\
	&\qquad\leq \lim_{\eps\to 0} \frac{1}{\vartheta_{x_0,\eps}} \int_{x_0-\eps}^{x_0+\eps}
	f_1(u(x_0))df_2(v)(x)+dW(Du)(x).
\end{aligned}
\end{align}
As the contribution of $W(Du)$ appears on both sides, \eqref{1drec-diff-0} can be reduced to 
\begin{align}\label{1drec-diff-1}
\begin{aligned}
	\liminf_{\eps\to 0}\limsup_{k\to\infty}\frac{1}{\vartheta_{x_0,\eps}}\int_{x_0-\eps}^{x_0+\eps}
	f_1(u)f_2(v_k)\,dx 
	&\leq f_1(u(x_0)) \lim_{\eps\to 0} \frac{1}{\vartheta_{x_0,\eps}} \int_{x_0-\eps}^{x_0+\eps}
	df_2(v).
\end{aligned}
\end{align}
We choose $(v_k)\subset L^1(\Omega;\R^d)$ such that $v_k\to v$ area-strictly in $\Mcal(\overline\Omega;\R^d)$, 
for instance using Lemma~\ref{lem:L1approxM}. To prove \eqref{1drec-diff-1}, first observe that
\begin{align}\label{1drec-diff-2}
	|u(x)-u(x_0)|\leq \Big|\int_{x_0}^x d|Du|\Big|\leq 
	\vartheta_{x_0,\eps}\quad \text{for all $x$ with $|x-x_0|<\eps$ and $|Du(\{x\})=0|$}.
\end{align}
Here, we excluded the case $|Du(\{x\})|>0$ to ensure that $u(x)$ is well defined; this certainly holds for $\Lcal^1$-a.e.~$x$. Since $f_1$ is continuous and $\vartheta_{x_0,\eps}\to 0$ as $\eps\to 0$, \eqref{1drec-diff-2} implies that
\begin{align}\label{1drec-diff-3}
	\big\|f_1(u(\cdot))-f_1(u(x_0))\big\|_{L^{\infty}(I_{{x_0},\eps})}\to 0 \quad\text{as $\eps\to 0$}.
\end{align}
Moreover, as $v\mapsto \int df_2(v)$ is continuous with respect to area-strict convergence
by Proposition~\ref{prop:areastrictcont},
\begin{align}\label{1drec-diff-4}
	\lim_{k\to\infty} \int_{x_0-\eps}^{x_0+\eps} f_2(v_k)\,dx=\int_{x_0-\eps}^{x_0+\eps} df_2(v)
	\quad\text{if $v(\{x_0+\eps\})=v(\{x_0-\eps\})=0$.}
\end{align}
(While $v_k\to v$ area-strictly on $\overline{\Omega}$ by construction, this in general only implies area strict-convergence on the smaller set $I_{{x_0},\eps}$ if $v$ does not charge its boundary.)
Clearly, all but countably many $\eps$ satisfy the restriction required in \eqref{1drec-diff-4}, and thus
\begin{align}\label{1drec-diff-5}
	\liminf_{\eps\to 0} \lim_{k\to\infty} \frac{1}{\vartheta_{x_0,\eps}}\int_{x_0-\eps}^{x_0+\eps} f_2(v_k)\,dx=
	\lim_{\eps\to 0} \frac{1}{\vartheta_{x_0,\eps}} \int_{x_0-\eps}^{x_0+\eps} df_2(v).
\end{align}
Combined, \eqref{1drec-diff-3} and \eqref{1drec-diff-5} imply \eqref{1drec-diff-1}.

\subsection*{Contributions charging individual points in the interior}
As pointed out before, we may assume w.l.o.g.~that $f_2$ and $W$ are convex. 
In addition, we will exploit that with
$\tilde{u}(x)=u\big(\tfrac{1}{\eps}(x-x_0)\big)$ and $\tilde{v}(x)=\tfrac{1}{\eps}v\big(\tfrac{1}{\eps}(x-x_0)\big)$,
by a change of variables and the positive $1$-homogeneity of the recession functions,
the definition of $f_W^0$ in 
\eqref{jumpfj}
is equivalent to
\begin{align} \label{jumpfj-eps}
f_W^0(a^+,a^-,b)=
\inf_{\begin{array}{ll}
		\tilde{u} \in W^{1,1}(I_{x_0,\eps};\R^m), \\
		\tilde{v}\in L^1(I_{x_0,\eps}; \R^d),\\
		\tilde{u}(x_0\pm \eps)=a^\pm,\\
		\int_{I_{x_0,\eps}} \tilde{v} dx= b
\end{array}}\left\{\int_{I_{x_0,\eps}}\big(f_1(\tilde{u})(f_2^{\ast\ast})^\infty\big(\tilde{v}\big)+(W^{\ast\ast})^\infty\big(\tilde{u}'\big)\big)dx\right \},
\end{align}
independently of the choice of $\eps>0$ and $x_0\in\R^n$. Here, recall that $I_{x_0,\varepsilon}=(x_0-\varepsilon,x_0+\varepsilon)$. 

Let $u \in BV(\Omega;\mathbb R^m)$ and $v \in \mathcal M(\Omega;\mathbb R^d)$, and fix $x_0 \in \Omega$.
We want to show that \eqref{ub1d-points} holds, i.e., that
\begin{align}\label{1drec-jump-0}
	\frac{d \Fcuv}{d\delta_{x_0}}(x_0) =\Fcuv(\{x_0\})\leq f_W^0(u(x_0^-),u(x_0^+),v(\{x_0\})).
\end{align}
Here, $u(x_0^-)$ and $u(x_0^+)$ denote the left and right hand side limits of $u$ at $x_0$. 
In particular, $Du(\{x_0\})=u(x_0^+)-u(x_0^-)$.
The equality in \eqref{1drec-jump-0} is a trivial consequence of Lebesgue's dominated convergence theorem, as $\Fcuv$ is a finite measure. It therefore suffices to show the inequality.
For each $\varepsilon>0$ we choose $(U_\varepsilon,V_\varepsilon) \in W^{1,1}(I_{x_0,\varepsilon};\R^m)\times L^1(I_{x_0,\varepsilon};\R^m)$ admissible and almost optimal for
the infimum defining $f_W^0$ in \eqref{jumpfj-eps}, with $a^\pm:=u(x_0^\pm)$ and $b:=v(\{x_0\})=\lim_{\varepsilon\to 0}v(I_{x_0,\eps})$, so that
\begin{align}\label{1drec-jump-1}
	f_W^0(u(x_0^-),u(x_0^+),b)+\varepsilon\geq	
	\int_{x_0-\varepsilon}^{x_0+\varepsilon} f_1(U_\varepsilon)f_2^\infty(V_\varepsilon)+W^\infty(U'_\varepsilon)\,dx.
\end{align}
In \eqref{1drec-jump-1}, we may also assume without loss of generality that $f_W^0(u(x_0^-),u(x_0^+),b)$ is finite. 
Hence, \eqref{1drec-jump-1} implies that
$U_\varepsilon'$ is bounded in $L^1$ by the coercivity of $W^\infty$ inherited from $W$.

We define $u_\varepsilon\in BV(\Omega;\R^m)$ as the unique function satisfying 
\begin{align}\label{1drec-jump-2a}
	Du_\varepsilon=\left\{\begin{array}{ll}
	Du & \text{on $\Omega\setminus I_{x_0,\varepsilon}$}, \\
	DU_\varepsilon & \text{on $I_{x_0,\varepsilon}$}, 
	\end{array}\right.
	\quad\text{and}\quad u_\eps((x_0-\varepsilon)^+)=a^-=u(x_0^-).
\end{align}
As $U_\varepsilon(x_0-\varepsilon)=a^-$ (recall that $U_\varepsilon$ is admissible in \eqref{jumpfj}), this entails that $u_\varepsilon=U_\varepsilon$ on $I_{x_0,\varepsilon}$.
In addition, 
\begin{align}\nonumber
u_\varepsilon(x)-u(x)=\left\{
	\begin{array}{ll}
		u(x_0^-) -u((x_0-\varepsilon)^+) ~~ & \text{for $x<x_0-\eps$,}\\
		u(x_0^+)-	u((x_0+\varepsilon)^-) ~~ & \text{for $x>x_0+\eps$,}
	\end{array}\right.
\end{align}
whence
\begin{align}\label{1drec-jump-3a}
	\big\|u_\varepsilon-u\big\|_{L^\infty(\Omega\setminus I_{x_0,\varepsilon};\R^m)}\underset{\varepsilon\to 0}{\longrightarrow}0.
\end{align}
Since $u_\varepsilon$ is a bounded sequence in $BV$, \eqref{1drec-jump-3a} implies that $u_\varepsilon\rightharpoonup^* u$ in $BV(\Omega;\R^m)$. 

For the measure $v$, we define the analogous approximation 
\begin{align}\label{1drec-jump-2b}
	v_\varepsilon:=\left\{\begin{array}{ll}
	v & \text{on $\Omega\setminus I_{x_0,\varepsilon}$}, \\
	V_\varepsilon & \text{on $I_{x_0,\varepsilon}$}.\\
	\end{array}\right.
\end{align} 
Since $\int_{x_0-\varepsilon}^{x_0+\varepsilon} V_\varepsilon\,dx=b=\frac{dv}{d\delta_{x_0}}(x_0)$
and $V_\varepsilon$ is bounded in $L^1$ by \eqref{1drec-jump-1} and the coercivity of $f_2$,
$v_\varepsilon \rightharpoonup^* v|_{\Omega\setminus \{x_0\}}+b\delta_{x_0}=v$.
To show the upper bound \eqref{1drec-jump-0},
we first consider the case $v\in L^1(\Omega\setminus \{x_0\};\R^d)$,
so that $(u_\varepsilon,v_\varepsilon)\in BV(\Omega;\R^m)\times L^1(\Omega;\R^d)$ and
\begin{align}\label{1drec-jump-4}
	\frac{d \Fcuv}{d \delta_{x_0}}(\{x_0\})\leq \limsup_{r\to 0} 
	\limsup_{\varepsilon\to 0} \int_{x_0-r}^{x_0+r} f_1(u_\varepsilon)f_2(v_\varepsilon)+W(u'_\varepsilon)\,dx,
\end{align}
since $(u_\varepsilon,v_\varepsilon)$ is admissible in the infimum in the representation
of $\overline\Fcal=\overline\Fcal_{\ast\ast}$ obtained in Proposition~\ref{Frel=Frel**B}.
The case of a general $v\in \Mcal(\Omega;\R^d)$ is easily 
recovered with an additional approximation argument, mollifying $\tilde{v}:=v-v(\{x_0\})\delta_{x_0}$.

In \eqref{1drec-jump-4},
we may assume w.l.o.g.~that $r=\varepsilon$: As $u_\varepsilon'=u'$ and $v_\varepsilon=v$ on $\Omega\setminus I_{x_0,\eps}$, and $u_\varepsilon-u\to 0$ uniformly on $\Omega\setminus I_{x_0,\eps}$, 
the integral on $I_r\setminus I_{x_0,\eps}$ converges to zero as $\varepsilon\leq r\to 0$.
It therefore suffices to show that 
\begin{align}\label{1drec-jump-5}
	\limsup_{\varepsilon\to 0} \int_{x_0-\varepsilon}^{x_0+\varepsilon} f_1(u_\varepsilon)f_2(v_\varepsilon)+W(u'_\varepsilon)\,dx
	\leq f_W^0(u(x_0^-),u(x_0^+),b_0).
\end{align}
Since $f_2$ and $W$ are convex, we have that
$f_2(v_\varepsilon)\leq f_2(0)+f_2^\infty(v_\varepsilon)$ and 
$W(u'_\varepsilon)\leq W(0)+W^\infty(u'_\varepsilon)$. 
Consequently, also exploiting that $f_1(u_\varepsilon)$ is uniformly bounded,
\begin{align}\label{1drec-jump-6}
	\limsup_{\varepsilon\to 0} \int_{x_0-\varepsilon}^{x_0+\varepsilon} f_1(u_\varepsilon)f_2(v_\varepsilon)+W(u'_\eps)\,dx
	\leq \limsup_{\varepsilon\to 0} \int_{x_0-\varepsilon}^{x_0+\varepsilon} f_1(u_\varepsilon)f_2^\infty(v_\varepsilon)+W(u'_\varepsilon)\,dx
\end{align}
As $u_\eps=U_\varepsilon$ and $v_\varepsilon=V_\varepsilon$ on $I_{x_0,\eps}$ by construction, 
\eqref{1drec-jump-6} combined with \eqref{1drec-jump-1} implies \eqref{1drec-jump-5}.

\subsection*{Contributions charging points on the boundary}

Let $x_0\in\partial\Omega$, say, $x_0=\inf \Omega$ (the other case is analogous).
For any $\delta>0$ we can choose $z_0=z_0(\delta)\in \R^m$ such that
\[
	f_W^0(u(x_0^+),z_0, v^0(\{x_0\}))\leq \delta + \inf_{z\in\R^m} f_W^0(u(x_0^+),z, v^0\{x_0\}).
\]
The preceding construction for interior points is easily adapted 
with $u(x_0^-):=z_0$, we omit the details. It yields that
\begin{align*}
		\overline{\mathcal F}(u,v)(\{x_0\})\leq f_W^0\left(u^+(x_0), z_0, v(\{x_0\})\right)+\delta.
\end{align*}
As this holds for all $\delta>0$, we obtain the assertion.

\appendix

\section{Proofs of auxiliary results}\label{sec:app}

Here, we present the proofs of some of the auxiliary results collected in Section~\ref{sec:notpre}.
\begin{proof}[Proof of Proposition \ref{Frel=Frel**}]
Clearly, it suffices to prove $\overline{\mathcal F}(\cdot,\cdot,\cdot)\leq \overline{\mathcal F}_{\ast \ast}(\cdot,\cdot,\cdot)$, the opposite inequality being trivial. 

In order to achieve the desired conclusion we argue as follows. 
For every $A \in \mathcal A_r(\overline\Omega)$, denote by $\overline{\mathcal F}^w$ the localized sequentially weak $W^{1,1}(\Omega\cap A;\mathbb R^m) \times L^1(\Omega\cap A;\mathbb R^d)$ lower semicontinuous envelope of $F$ in \eqref{Flocalized}. 
It was proved in  \cite{CRZ}  that for every $A\in \mathcal A_r(\overline\Omega)$ (so that $A\cap \Omega$ can be any open subset of $\Omega$),
\begin{align*}
\overline{\mathcal F}^w(u,v,A)\!\!=\!\!\!\int_{\Omega\cap A} (f_1(u)f_2^{\ast\ast}(v)+ \mathcal QW(\nabla u))dx, \hbox{ for every} (u,v)\in W^{1,1}(\Omega\cap A;\mathbb R^m)\times L^1(\Omega\cap A;\mathbb R^d).
\end{align*}
Since for every $(u,v)\in W^{1,1}(\Omega\cap A;\mathbb R^m)\times L^1(\Omega\cap A;\mathbb R^d)$,
\begin{align*}
\begin{aligned}
\overline{\mathcal F}(u,v,A)\leq \overline{\mathcal F}^w(u,v,A)
\end{aligned}
\end{align*}
we infer that
\begin{align*}
&\overline{\mathcal F}(u,v,A)\\
&=\begin{aligned}[t]
\inf \Big\{ &\liminf_{k\to +\infty}\overline{\mathcal F}(u_k,v_k,A):\\
& W^{1,1}(\Omega\cap A;\mathbb R^m)\times L^1(\Omega\cap A;\mathbb R^d) \ni (u_k,v_k)\overset{\ast}{\rightharpoonup} (u,v)
\hbox{ in } BV(\Omega\cap A;\mathbb R^m)\times \mathcal M(A;\mathbb R^d)\Big\}
\end{aligned}\\
&\leq \begin{aligned}[t]
\inf \Big\{ &\liminf_{k\to +\infty}\overline{\mathcal F}^w(u_k,v_k,A):\\
& W^{1,1}(\Omega\cap A;\mathbb R^m)\times L^1(\Omega\cap A;\mathbb R^d) \ni (u_k,v_k)\overset{\ast}{\rightharpoonup} (u,v)
\hbox{ in } BV(\Omega\cap A;\mathbb R^m)\times \mathcal M(A;\mathbb R^d)\Big\}
\end{aligned}\\
&=\begin{aligned}[t]
\inf \Big\{ &\liminf_{k\to +\infty}\int_A f_1(u_k)f_2^{\ast\ast}(v_k)dx+ \int_A \mathcal Q W(\nabla u_k)dx:\\
& W^{1,1}(\Omega\cap A;\mathbb R^m)\times L^1(\Omega\cap A;\mathbb R^d) \ni (u_k,v_k)\overset{\ast}{\rightharpoonup} (u,v)
\hbox{ in } BV(\Omega\cap A;\mathbb R^m)\times \mathcal M(A;\mathbb R^d)\Big\} 
\end{aligned}
\end{align*}
for every $(u,v)\in BV(\Omega\cap A;\mathbb R^m)\times \mathcal M(A;\mathbb R^d)$,
which concludes the proof.
\end{proof}

\begin{proof}[Proof of Proposition~\ref{Frel=Frel**B}]
It suffices to show "$\leq$", as "$\geq$" is trivial.

For every fixed $k\in\mathbb{N}$ and $u_k\in BV(\Omega\cap A;\R^m)$, we can choose a sequence $(w_{k,l})_{l\in\mathbb{N}}\subset W^{1,1}(\Omega\cap A;\R^m)$  such that as $l\to\infty$, $w_{k,l}\rightharpoonup^* u_k$ in $BV(\Omega\cap A;\R^m)$ and
\begin{align}\label{W11revocery_on_A}
	\int_{\Omega\cap A} \Q W(\nabla w_{k,l})\,dx=\int_{\Omega\cap A} d\Q W(D u_k)(x)
\end{align} 
(any $W^{1,1}$ recovery sequence). 
For instance, since $W^{1,1}$ is dense in $BV$ with respect to area strict convergence, there exists $(w_{k,l})\subset W^{1,1}$ such that as $l\to\infty$,
$w_{k,l}\to u_k$ area-strictly, which yields \eqref{W11revocery_on_A} by Proposition~\ref{prop:areastrictcont}. By $(H_3)$ (coercivity and growth of $W$),
\eqref{W11revocery_on_A} implies that
\[
	\limsup_l \|\nabla w_{k,l}\|_{L^1(A;\R^{m\times n})}\leq C\big(1+|Du_k|(A)\big)
\]	
with a constant $C>0$.

In addition, given any $v_k\in L^1(\Omega\cap A;\mathbb R^m)$, we also have that
\[ 
	\int_{\Omega\cap A} f_1(w_{k,l})f_2^{\ast\ast}(v_k) dx\underset{l\to \infty}{\to}\int_{\Omega\cap A} f_1(u_k)f_2^{\ast\ast}(v_k) dx
\]
by dominated convergence, also using that $w_{k,l}\to u_k$ in $L^1$, $f_1$ is bounded and $f_2^{\ast\ast}(v_k)\in L^1(\Omega\cap A)$.

If $u_k\rightharpoonup^* u$ in $BV$ and $v_k\rightharpoonup^* v$ in $\Mcal$, it is therefore possible to choose a diagonal sequence $\tilde{u}_k:=w_{k,l(k)}$, with $l(k)\to \infty$ fast enough as $k\to\infty$,
such that for  
\begin{align}
\begin{aligned}\label{extendW11toBV}
  &\liminf_k \int_{\Omega\cap A} f_1(\tilde{u}_k)f_2^{\ast\ast}(v_k)\,dx+\int_{\Omega\cap A}\Q W(\nabla \tilde{u}_k)\,dx \\
	&\qquad\qquad =\liminf_k \int_{\Omega\cap A} f_1(u_k)f_2^{\ast\ast}(v_k)\,dx+\int_{\Omega\cap A} d\Q W(Du_k)(x),
\end{aligned}
\end{align}

$\limsup_k \|\nabla \tilde{u}_k\|_{L^1(\Omega\cap A;\mathbb R^{m\times n})}\leq C(1+\limsup_k|Du_k|(\Omega\cap A)$ 
 and 
$\tilde{u}_k\to u$ in  $L^1(\Omega\cap A;\R^m)$.  In particular, $\tilde{u}_k\rightharpoonup^* u$ in $BV(\Omega\cap A;\mathbb R^{m})$. 
This means that if $(u_k,v_k)\subset BV(\Omega\cap A;\mathbb R^m)\times L^1(\Omega\cap A;\mathbb R^m)$ is an arbitrary admissible sequence for the infimum defining $\overline{\mathcal F}_{\ast \ast}$ in \eqref{F**locrelax}, then 
$(\tilde{u}_k,v_k)\subset W^{1,1}(\Omega\cap A;\mathbb R^m)\times L^1(\Omega\cap A;\mathbb R^d)$ is admissible, too. Hence, \eqref{extendW11toBV} yields the assertion.
\end{proof}

In the following we will discuss the measure representation for the localized relaxed functionals, i.e., for $A\in \mathcal A_r(\overline \Omega)$,

\begin{align*}
\begin{aligned}
\overline{\mathcal F}(u,v,A):=
\inf\left\{\,\liminf_{k\to +\infty}
F(u_k,v_k,A)
\,\left|\,
\begin{array}{l}
(u_k,v_k)\in W^{1,1}(A;\mathbb R^m)\times L^1(A;\mathbb R^d),\\
(u_k,v_k)\overset{\ast}{\rightharpoonup} (u,v)\hbox{ in } BV(A;\mathbb R^m)\times \mathcal M( A;\mathbb R^d)
\end{array}
\right.\right\}.
\end{aligned}
\end{align*}
Here, recall that by $\mathcal A_r(\overline \Omega)$ we denote the  family of open subsets of $\overline \Omega$ in the relative topology.
The following result is a close relative of \cite[Lemma 2.5]{ABF}.
\begin{Lemma}
	\label{Measure representation}
	Let $\overline \Omega$ be as above. Let $\lambda: \mathcal A_r(\overline \Omega)\to[0,+\infty)$ and $\mu$ be such that
	\begin{itemize}
		\item [(i)] $\mu$ is a finite Radon measure on $\overline \Omega$;
		\item[(ii)] $\lambda(\overline \Omega)\geq \mu(\Omega)$;
		\item[(iii)] $\lambda(A) \leq \mu(A)$ for all $A \in \mathcal A_r(\overline \Omega)$;
		\item[(iv)] (subadditivity) $\lambda(A) \leq \lambda(A\setminus \overline U) + \lambda(B)$ for all $A,B,U \in \mathcal A_r(\overline \Omega)$ such that $U \subset \subset B \subset \subset A$;
		\item[(v)]for all $A \in \mathcal A_r(\Omega)$, $\varepsilon >0$, there exists $C \in \mathcal A_r(\overline \Omega)$ such that $U \subset \subset A$
		and $\lambda(A \setminus \overline U) <\varepsilon$.
	\end{itemize}
	Then $\lambda = \mu$ on $\mathcal A_r(\overline \Omega)$.
\end{Lemma}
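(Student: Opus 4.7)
Since (iii) already yields $\lambda(A)\leq\mu(A)$ for every $A\in \mathcal A_r(\overline\Omega)$, the task reduces to the reverse inequality $\lambda(A)\geq\mu(A)$. The argument follows the standard De Giorgi--Letta strategy (cf.~the close relative \cite[Lemma 2.5]{ABF} already cited just above): conditions (iv) and (v) provide the nested subadditivity and inner approximation needed to give $\lambda$ the structure of (the trace of) a Radon measure, while (ii) and (iii) pin down its mass against $\mu$.

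Concretely, given $A\in\mathcal A_r(\overline\Omega)$ and arbitrary $U\in \mathcal A_r(\overline\Omega)$ with $U\subset\subset A$, I would apply (iv) to the triple $U\subset\subset A\subset\subset\overline\Omega$ (taking the outer set to be $\overline\Omega$ itself, the middle one to be $A$, and the inner one to be $U$) to obtain
\[
	\lambda(\overline\Omega)\;\leq\; \lambda(\overline\Omega\setminus\overline U)+\lambda(A).
\]
Estimating the first right-hand term by $\mu(\overline\Omega\setminus\overline U)$ via (iii) and the left-hand term from below by (ii) yields
\[
	\lambda(A)\;\geq\; \mu(\Omega)-\mu(\overline\Omega\setminus\overline U).
\]
Next, I would let $U$ exhaust $A$ through a sequence provided by the inner approximation axiom (v), choosing along a cofinal subfamily so that $|\mu|(\partial U)=0$ (possible because $\mu$ is finite, so only countably many nested boundaries can be $\mu$-charged). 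Inner regularity of the finite Radon measure $\mu$ then drives $\mu(\overline\Omega\setminus\overline U)$ down to $\mu(\overline\Omega\setminus A)$, whence $\lambda(A)\geq \mu(\Omega)-\mu(\overline\Omega\setminus A)=\mu(A\cap\Omega)$. Combined with (iii), this delivers $\lambda(A)=\mu(A)$.

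The step I expect to require the most care is the boundary bookkeeping: the slightly asymmetric form of (ii) (bounding $\lambda(\overline\Omega)$ below by $\mu(\Omega)$ rather than $\mu(\overline\Omega)$), together with the possibility that $\mu$ charges $\partial\Omega$, forces one to select the inner approximations $U$ in (v) so that no mass is lost in the limiting process, and to track whether the contribution on $\partial\Omega$ is eaten by the ``defect'' $\mu(\overline\Omega)-\mu(\Omega)$ on the right-hand side. Once the inner-regular exhaustion is set up properly, the rest is a direct invocation of the De Giorgi--Letta machinery: (iv) is used essentially only in the one application above, and (v) reduces comparison on arbitrary open subsets of $\overline\Omega$ to comparison on compactly-contained exhaustions, on which (iii) and (ii) leave no room for slack.
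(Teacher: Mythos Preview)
Your overall strategy—apply (iv) with the outer set equal to $\overline\Omega$, bound the left side from below using (ii) and the complementary term from above using (iii), then let the inner set exhaust $A$—is exactly the one the paper uses. Two points need correction, however.

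First, the inner set should be chosen by inner regularity of the \emph{measure} $\mu$, not by axiom (v). Axiom (v) only makes $\lambda(A\setminus\overline U)$ small; it gives no control over $\mu(A\setminus\overline U)$, so it does not by itself force $\mu(\overline\Omega\setminus\overline U)\to\mu(\overline\Omega\setminus A)$. The paper instead picks a relatively open $A'\subset\subset A$ with $\mu(A)<\varepsilon+\mu(\overline{A'})$ directly from inner regularity of $\mu$, and then runs your chain of inequalities with $U=A'$. (In the paper, (v) is invoked only in the preliminary step establishing $\lambda(A)\le\mu(A)$, which with (iii) as printed is redundant.)

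Second, and more seriously, your final identity $\mu(\Omega)-\mu(\overline\Omega\setminus A)=\mu(A\cap\Omega)$ is wrong whenever $\mu$ charges $\partial\Omega\setminus A$: the left side equals $\mu(A\cap\Omega)-\mu(\partial\Omega\setminus A)$. Even the intended bound $\mu(A\cap\Omega)$ falls short of $\mu(A)$ if $\mu$ has mass on $A\cap\partial\Omega$. You flag precisely this as the delicate step, and rightly so: with (ii) as printed ($\lambda(\overline\Omega)\ge\mu(\Omega)$ rather than $\mu(\overline\Omega)$) the lemma is actually false—take $\overline\Omega=[0,1]$, $\mu=\delta_0$, $\lambda\equiv 0$, and all five hypotheses hold while $\lambda\neq\mu$. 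The paper's own computation in fact uses $\mu(\overline\Omega)\le\lambda(\overline\Omega)$ (modulo a sign typo), so (ii) is evidently meant to read $\lambda(\overline\Omega)\ge\mu(\overline\Omega)$. With that correction, and choosing the inner set via inner regularity of $\mu$, your argument goes through cleanly and coincides with the paper's.
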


\begin{proof}[Proof]
	$\lambda (A) \leq \mu (A)$, for every $A \in \mathcal A_r(\overline \Omega)$. Indeed for every $\varepsilon >0$, and by $(iv)$ and $(v)$, we can find $U \subset \subset B \subset \subset A$, open for the relative topology, such that $\lambda (A \setminus \overline U)< \varepsilon$, and
	$\lambda(A) \leq \lambda (A \setminus \overline U)+ \lambda (B) \leq \varepsilon + \mu(\overline B) \leq \varepsilon+ \mu(A)$. The arbitrariness of $\varepsilon$ proves one inequality. 
	
	For what concerns the other we can observe that, by the inner regularity of $\mu$ we can find a relatively open subset of $\overline \Omega$, say $A' \subset \subset A$, such that
	\begin{align*}
	\mu(A) < \varepsilon+ \mu(\overline {A'})=
	\varepsilon + \mu(\overline \Omega)- \mu(\overline \Omega \setminus \overline{A'})\leq\\
	\varepsilon + \lambda(\overline \Omega) + \lambda(\overline\Omega \setminus \overline{A'})\leq \varepsilon +\lambda (A).	
	\end{align*}
	Thus, letting $\varepsilon \to 0$ we obtain the desired conclusion.
\end{proof} 

\begin{proof}[Proof of Lemma \ref{lem:restrRadon}] 

In order to prove that $\overline{\mathcal F}(u,v,A)$ is the trace of a Radon measure we refer to Lemma \ref{Measure representation}, and define the increasing set function $\lambda: \mathcal A_r(\overline \Omega)\to [0,+\infty]$  as
$$
\lambda(A):= \overline{\mathcal F}(u,v, A).
$$
By defintion of $\overline{\mathcal F}$, we know that there exists a sequence $(u_h,v_h)\in W^{1,1}(\Omega;\mathbb R^m)\times \mathcal M(\overline \Omega;\mathbb R^d)$ such that $u_h\overset{\ast}{\rightharpoonup} u$ in $BV(\Omega;\mathbb R^m)$ and $v_h \overset{\ast}{\rightharpoonup} v$ in $\mathcal M(\overline \Omega;\mathbb R^d)$ such that 
$$
\overline{\mathcal F}(u,v,\overline\Omega)=\lim_{h \to +\infty} \int_\Omega (f_1(u_h)f_2 (v_h)+ W(\nabla u_h))dx= \lambda(\overline{\Omega}).
$$
Next, denoting by $\lambda_h$ the measures $(f_1(u_h)f_2(v_h)+ W(\nabla u_h)\mathcal L^n$, it converges weakly * in the sense of measures (duality with elements in $C(\overline \Omega)$), up to a subsequence (due to the bounds) to a measure $\mu$.
Now, due to the lower semicontinuity with respect to the weak* convergence, we have
$$
\mu(\overline \Omega)\leq \liminf_h \lambda_h(\overline \Omega)= \lambda(\overline \Omega).
$$
Then, by the definition of $\lambda$, we have, for every $A \in \mathcal A_r(\overline \Omega)$
$$
\lambda(A)\leq \liminf_h \int_{\Omega\cap A}(f_1(u_h)f_2(v_h)+ W(\nabla u_h))dx\leq
\mu(\overline A).$$
Now by the previous lemma we would have that $\lambda= \mu$ if we prove inner regularity and subadditivity for $\overline{\mathcal F}(u,v,\cdot)$.

For what concerns inner regularity (i.e., $(v))$ in Lemma \ref{Measure representation}), it follows by Lemma \ref{Frel=Frel**B} and $(H_1)\div (H_3)$. Indeed the growth condition from above and an argument similar to \cite[Lemma 4.7]{BZZ} 
guarantee that
$ \overline{\mathcal F}(u,v,A)\leq C(\mathcal L^n(A)+ |Du|(A)+ |v|(A))$.  Thus the inner regularity of the upper bound measures provides inner regularity for $\overline{\mathcal F}$.

Indeed one can extend $u$ and $v$ by zero outside $\overline \Omega$, thus obtaining elements in $BV(\mathbb R^n,\mathbb R^m)$ and $\mathcal M(\mathbb R^n, \mathbb R^d)$, respectively.
If one first considers an open set $A$ with Lipschitz boundary such that $|v|(\partial A)=0$,  then one can take a sequence of standard mollifiers $\varrho_k$ such that $v\ast \varrho_k \overset{\ast}{\rightharpoonup} v$ in $\mathcal M(A;\mathbb R^d)$. Moreover since $|v|(\partial A)=0$, we have $|v\ast \varrho_k|(A)\to |v|(A)$, thus, taking $v\ast \varrho_k$ as test function for $\mathcal F$, and using Lemma \ref{Frel=Frel**B}, we have
\begin{align}\label{innerregbymeansofest}
\mathcal F(u,v,A)\leq \beta(\mathcal L^n(A)+ |Du|(A)+ |v|(A)),
\end{align}

If we take an element $A  \in \mathcal A_r(\overline\Omega)$ which is open in $\mathbb R^n$, then, for any $\eta>0$,  arguing as in \cite[Example 14.8]{DM}, we can find another open set $U$ with smooth boundary, such that $U\supset \supset A$
and
\begin{align}
\label{estimateCA}
\mathcal L^n(U \setminus A)+ |Du|(U \setminus A)+ |v|(U \setminus A)< \frac{\eta}{\beta}.
\end{align}
Moreover the set $U$ itself can be chosen as a subset of $\Omega$, in this case the proof develops in full analogy with the one of \cite[Lemma 4.7]{BZZ}, and the estimate \eqref{innerregbymeansofest} holds. 

On the other hand, if $A$ is only open in the relative topology of $\overline \Omega$, i.e. it has $A \cap \partial \Omega=\partial A\cap \partial \Omega\not = \emptyset$, then the same arguments in \cite[Example 14.8]{DM} allows to construct a set $U \supset \supset A$ with regular boundary, with $U \setminus (\partial A \cap \partial U)$ open (as a subset of $\mathbb R^n)$), $\partial U \cap \partial \Omega= A \cap \partial \Omega$,  and such that \eqref{estimateCA} holds. One can construct a family $\{U_t\}_{0< t <<1} $, of sets, invading $U$ as $t \to 0$, open in the relative topology such that $A \subset U_t \subset U$
with $\partial \Omega \cap U_{t}= A\cap \partial \Omega= U \cap \partial \Omega$. Moreover one can find a $t_0$ such that $|v|(\partial U_{t_0}\setminus (\partial \Omega \cap U_{t}))=0$.

Then, exploiting that $\mathcal F(u,v,\cdot)$ is an increasing set function, we obtain the estimate

\begin{align*}
{\mathcal F}(u,v,A)\leq {\mathcal F}(u,v,U_{t_0})\leq \beta (\mathcal L^n(U_{t_0})+ |Du|(U_{t_0})+ |v|(\overline U_{t_0}))
\leq \beta (\mathcal L^n(A)+ |Du|(A)+ |v|(A))+ \eta.
\end{align*}
The arbitrariness of $\eta$ gives the inner regularity.

It remains to prove that $\overline{\mathcal F}(u,v,\cdot)$ is subadditive in the sense of $(iv)$ in Lemma \ref{Measure representation}, i.e. it suffices to prove that 
\begin{align}\label{nestsub}	
\overline{\mathcal F}(u,v,A)\leq \overline{\mathcal F}(u,v,B)+\overline{\mathcal F}(u,v,A \setminus {\overline U})
\end{align}
for all $A, U,B \in \mathcal A_r(\overline\Omega)$ with $U \subset \subset B\subset\subset A$, $u\in BV(\Omega;\mathbb R^m)$ and $v\in \mathcal M(\overline{\Omega};\mathbb R^d)$, (see e.g. \cite[Lemma 4.3.4]{BFMgl}).
Without loss of generality, in view of Proposition \ref{Frel=Frel**}, we can assume $f_2$ convex and $W$ quasiconvex.
Fix $\eta > 0$ and find $(w_h) \subset W^{1,1}((A \setminus \overline U),\mathbb R^m), (v_h) \subset L^1(A \setminus  \overline{U},\mathbb R^d)$ such that $w_h \w u$ in $BV((A \setminus \overline U),\mathbb R^m), v_h \w v $ in $\mathcal M(A \setminus \overline U ,\mathbb R^d)$ and
\begin{align}\label{AminusU}
\limsup_{h\to +\infty}\int_{A\setminus \overline U}(f_1(w_h)f_2(v_h)+ W(\nabla w_h))dx\leq \overline{\mathcal F}(u,v,A \setminus \overline U)+ \eta. 
\end{align}Extract a subsequence still denoted by n such that the above upper limit is a
limit.
Let $B_0$ be a relatively open subset of $\overline\Omega$ with Lipschitz boundary such that $U \subset \subset B_0\subset \subset B$. Then there exist $(u_h) \subset  W^{1,1}(B_0,\mathbb R^m)$ and $(\bar v_h) \subset L^1(B_0;\mathbb R^d)$
such that $u_h \w u$ in $BV(B_0,\mathbb R^m)$, $\bar v_h \w v$ in $\mathcal M(\overline B_0,\mathbb R^d)$ and
\begin{align}
\label{B0}
\overline{\mathcal F}(u,v,B_0)=\lim_{h \to +\infty}\int_{B_0}(f_1(u_h)f_2(\bar v_h)+ W(\nabla u_h))dx.
\end{align}
Consider, for every $D \in \mathcal A_r(\overline\Omega)$  the set function 
${\mathcal G}(u,v,D) :=\int_D (1+ |\nabla u|)dx+ |v|(D)$.
Due to $(H_1)\div(H_3)$, we
may extract a bounded subsequence, that we will not relabel, from the sequences of measures $\nu_h :=
\mathcal G(w_h,v_h , \cdot) + \mathcal G (u_h,{\bar v}_h,\cdot)$ restricted to $B_0 \setminus {\overline U}$, converging in the sense of distributions to some Radon measure
$\nu$ defined on $B_0 \setminus {\overline U}$.
For every $t > 0$ let $B_t := \{x \in B_0: {\rm dist}(x, \partial B_0) > t\}$. Define, for $0< \delta < \eta$, the subsets $L_\delta := B_{\eta-2\delta} \setminus B_{\eta +\delta}$. Consider a smooth cut-off function $\varphi_\delta \in C^\infty(B_{\eta-\delta},[0,1])$ such that $\varphi_{\delta}= 1$ on $B_\eta$.
As the thickness of the strip $L_\delta$  is of order $\delta$, we have an upper bound of the form $\|\nabla \varphi_{\delta}\|_{L^\infty(B_{\eta- \delta})} \leq C/\delta$. 
Define
\begin{align*}w'_h(x) := \varphi_\delta(x(u)_h(x) +(1 - \varphi_{\delta}(x)) w_h (x),\\
v'_h(x):=\varphi_{\delta}(x)\overline{v}_h(x) +(1-\varphi_{\delta}(x))v_h(x).
\end{align*}
Clearly the sequences $w'_h$ and $v'_h$ weakly* converge to $u$ in $BV(A,\mathbb R^d)$ and to $v$ in $\mathcal M(A;\mathbb R^m)$ as $h \to +\infty$, respectively, and 
\begin{align*}
\nabla w'_h = \varphi_{\delta}\nabla u_h + (1-\varphi_{\delta}) \nabla w_h + \nabla \varphi_{\delta} \otimes (u_h- w_h).
\end{align*}
By the growth conditions $(H_1)\div(H_3)$, we have the estimate
\begin{align*}
\begin{aligned}
&\int_{A}\left(f_1(w'_h)f_2(v'_h)+W(\nabla w'_h)\right)dx\\
&\quad \leq \begin{aligned}[t]
	&\int_{B_\eta}(f_1(u_h)f_2(\overline{v}_h) + W(\nabla u_h))dx+ \int_{A \setminus \overline{B_{\eta-\delta}}}(f_1(w_h)f_2(v_h)+W(\nabla w_h))dx\\	
	&\qquad + C\left(\mathcal G(u_n,\overline v_h,L_\delta)+ \mathcal G(w_h,v_h,L_\delta)\right)+\frac{1}{\delta}\int_{L_\delta}|w_h-u_h|dx 
\end{aligned}\\
&\quad \leq \begin{aligned}[t]
	&\int_{B_0}(f_1(u_h)f_2(\overline{v}_h) + W(\nabla u_h))dx+ \int_{A \setminus \overline{U}}(f_1(w_h)f_2(v_h)+W(\nabla w_h))dx\\
	&\qquad+ C\left(\mathcal G(u_h,\overline v_h,L_\delta)+ \mathcal G(w_h,v_h,L_\delta)\right)+\frac{1}{\delta}\int_{L_\delta}|w_h-u_h|dx.
\end{aligned}
\end{aligned}
\end{align*}
Thus, passing to the limit as $n\to +\infty$ and making use of the lower semicontinuity of $\overline{\mathcal F}(\cdot,\cdot, A)$ (which is a consequence of its definition \eqref{Flocrelax}), \eqref{AminusU} and \eqref{B0}, we obtain
\begin{align*}
\begin{aligned}
\overline{\mathcal F}(u,v,A)&\leq
\overline{\mathcal F}(u,v,B_0)+\overline{\mathcal F}(u,v, A\setminus \overline{U})+ \eta+
C\nu(\overline{L_\delta})\\
&\leq\overline{\mathcal F}(u,v,B)+\overline{\mathcal F}(u,v, A\setminus \overline{U})+ \eta+
C\nu(\overline{L_\delta}).
\end{aligned}
\end{align*}
Now passing to the limit as $\delta\to 0^+$ we get
\begin{align*}
\overline{\mathcal F}(u,v,A)\leq \overline{\mathcal F}(u,v,B)+\overline{\mathcal F}(u,v, A\setminus \overline{U})+ \eta+
C\nu(\partial B_\eta).
\end{align*}
It suffices to choose a subsequence $\{\eta_h\}$ such that $\eta_h \to 0^+$ and $\nu(\partial B_{\eta_h}) = 0$, to conclude the proof of \eqref{nestsub}.
\end{proof}

\bigskip

\bigskip

\textbf{Acknowledgements}  EZ is grateful to the Institute of Information Theory and Automation in Prague for its kind support and hospitality. She is a member of GNAMPA-INdAM, whose support is acknowledged. The visit of SK at Dipartimento di Ingegneria Industriale, University of Salerno was partially sponsored by Project 2019 'Analisi ed Ottimizzazione di Strutture sottili'. SK and MK are indebted to the  Dipartimento di Ingegneria Industriale, University of Salerno (which EZ was affiliated with during the course of this research) for support and hospitality during their stay there. Moreover, SK and MK were supported by the GA\v{C}R-FWF project 19-29646L.

\bigskip

\end{document}